\theoremstyle{plain}
\newtheorem{thm}{Theorem}[section]
\newtheorem{cor}[thm]{Corollary}
\newtheorem{lem}[thm]{Lemma}
\newtheorem{prop}[thm]{Proposition}
\numberwithin{equation}{section} \theoremstyle{definition}
 \newtheorem{defn}[thm]{Definition}
 \newtheorem{exmp}[thm]{Example}
\numberwithin{equation}{section}
\renewcommand{\to}{\longrightarrow}
\renewcommand{\to}{\longrightarrow}
\renewcommand{\to}{\longrightarrow}
\begin{document}
\title[Approximating Fixed points of Bregman Generalized $\alpha$-nonexpansive mappings] {Approximating Fixed points of Bregman Generalized $\alpha$-nonexpansive mappings}
\author[K. Muangchoo$^{1,2},$ P. Kumam,
Y.J. Cho and S. Dhompongsa]{ K. Muangchoo$^{1},$ P. Kumam$^{1,*}$, Y.J. Cho$^{2} $ and S. Dhompongsa$^{1} $
}
\date{}
\thanks{$^*$Corresponding author}
\thanks{\it $^\S$ Email address: {\rm poom.kumam@mail.kmutt.ac.th (P. Kumam), yjcho@gsnu.ac.kr (Y.J. Cho), sompong.d@cmu.ac.th (S. Dhompongsa)}} \maketitle

\vspace*{-0.5cm}

\begin{center}
{\footnotesize

$^{1}$KMUTTFixed Point Research Laboratory,  Room SCL 802 Fixed Point Laboratory, \\
 Department of Mathematics, Science Laboratory Building,\\
  Faculty of Science, King Mongkut's University of Technology Thonburi (KMUTT),\\
126 Pracha-Uthit Road, Bang Mod, Thrung Khru, Bangkok 10140, Thailand.\\
$^{2}$Department of Mathematics Education and the RINS\\
Gyeongsang National University, Jinju 660-701, Korea.\\

 }
\end{center}
\maketitle  \hrulefill
\begin{abstract}
In this paper, we introduce a new class of Bregman generalized $\alpha$-nonexpansive mappings in terms of Bregman distances,  and investigate the Ishikawa and Noor iterations for these mappings. We establish weak and strong convergence theorems of Ishikawa and Noor iterative schemes for Bregman generalized $\alpha$-nonexpansive mappings in Banach spaces. Furthermore, we propose an example of our generated mapping and some numerical examples which support our main theorem. Our results are new and improve the recent ones in the literature.\\[5pt]

\end{abstract}
 \vspace{0.5cm}
{\noindent\it Keywords}:  Bergman distances, Bergman function, Bergman-Opial property, Generalized $\alpha$-nonexpansive mappings.\\[5pt]
{\noindent\it{2000 Mathematics Subject Classification:}} 47H09, 47H10, 58C30.
\maketitle

\hrulefill

\section{Introduction}

In 1967, Bregman \cite{Bregman} discovered an effective technique using the so called Bregman distance function $ D_{f} (.,.)$ in the process of designing and analyzing feasibility and optimization algorithms. This opened a growing area of research in which Bregman's technique was applied in various ways in order to design and analyze iterative algorithms for solving not only feasibility and optimization problems, but also algorithms for solving variational inequality problems, equilibrium problems, fixed point problems for nonlinear mappings and so on (see \cite{Bauschke, Butnariu02}).

In recent years, several authors are constructing iterative sequences for finding fixed points of nonlinear mappings by using Bregman distances and the Bregman projection; we refer the readers to \cite{Hussain, Kohsaka01} and the reference therein. In 2003, Bauschke, Borwein and Combettes \cite{Bauschke1, Bauschke2} first introduced the class of Bregman firmly nonexpansive mappings which is a generalization of the classical firmly nonexpansive mappings. A few years before, Reich \cite{Reich} studied the class of Bregman strongly nonexpansive mappings and obtained these common fixed points.


Motivated and inspired by the above-mentioned results. We introduce the classes of Bregman generalized $\alpha$-nonexpansive mappings and investigate the Ishikawa and Noor iterations for these mappings and obtain weak and strong convergence theorems for Bregman generalized $\alpha$-nonexpansive mappings the result in this paper extend and generalize the result of Suzuki \cite{Suzuki} (2008), Pant et al. \cite{Pant} (2017) and Naraghirad et al. \cite{Naraghirad02} (2014).

Throughout this paper, we assume that $ E $ is a real Banach space with $ E^{*} $ as its dual space and norm $ \Vert \cdot \Vert .$  We denote the value of $ x^{*} \in E^{*} $ at $ x \in  E $ by $ \langle x,x^{*} \rangle .$ When $\{x_n\}_{n\in \mathbb{N} }$ is a sequence in $ E, $ we denote the strong convergence and the weak convergence of $\{x_n\}_{n\in \mathbb{N} }$ to $ x \in E $ by $ x_n \rightarrow x $  and $ x_n \rightharpoonup x, $ respectively. Let $ C $ be a nonempty subset of $ E $ and $T:C \to E $ be a map, a point $ x \in C $ is called a fixed point of $ T $ if and only if  $ Tx = x,$ and the set of all fixed points of $ T $ is denoted by $ F(T).$ A mapping $ T $ is said to be
\begin{itemize}
\item[$\bullet$] nonexpansive if
$ \Vert Tx - Ty \Vert \leq \Vert x - y \Vert $ for all $ x, y \in C, $
\vskip 2mm
\item[$\bullet$] quasi-nonexpansive if $ F(T) \neq \emptyset $ and $ \Vert Tx - y \Vert \leq \Vert x - y \Vert $ and  for all  $ x \in C $ and $ y \in F(T),$
\vskip 2mm
\item[$\bullet$] condition \cite{Suzuki} (C which is also known as a Suzuki - type generalized nonexpansive mapping) if \vskip 2mm
 $ \dfrac{1}{2} \| x - Tx \| \leq \| x - y \|  $ implies  
 $ \| Tx - Ty \| \leq \| x - y \| $ for all $ x, y \in C, $  \vskip 2mm 
\item[$\bullet$] $\alpha$-nonexpansive if $ \alpha < 1 $ \vskip 2mm
 $ \| Tx - Ty \|^{2} \leq \alpha \| Tx - y \|^{2} + \alpha \|x - Ty \|^{2} + (1 - 2 \alpha ) \| x - y \|^{2} $ for all $ x, y \in C, $
 \vskip 2mm
 \item[$\bullet$] generalized $\alpha$-nonexpansive \cite{Pant} if there exists an $ \alpha \in [0,1) $ such that \vskip 2mm
 $ \dfrac{1}{2} \| x - Tx \| \leq \| x - y \|  $ implies  
 $ \| Tx - Ty \| \leq \alpha \| Tx - y \| + \alpha \|x - Ty \| + (1 - 2 \alpha ) \| x - y \| $ \vskip 2mm for all $ x, y \in C. $ 
\end{itemize}

The nonexpansivity plays an important role in the study of  the Ishikawa iteration and  the Noor iteration. \\
The Ishikawa iteration \cite{Ishikawa}  given by
		\begin{equation} \label{Ishikawa}
\left \{\begin{array}{l} 

y_n = \beta_n Tx_n + ( 1 - \beta_n ) x_n,\\
x_{n+1}  = \gamma_n Ty_n + ( 1 - \gamma_n) x_n, 
\end{array}\right.
		\end{equation}

where  $\{\beta_n\}_{n\in \mathbb{N}} $ and $\{\gamma_n\}_{n\in \mathbb{N}} $ are  arbitrary sequences in $[0,1.)$ \\

And the Noor iteration \cite{Noor}  given by
		\begin{equation} \label{Noor}
\left \{\begin{array}{l} 

z_n = \alpha_n Tx_n + ( 1 - \alpha_n ) x_n,\\
y_n = \beta_n Tz_n + ( 1 - \beta_n ) x_n,\\
x_{n+1}  = \gamma_n Ty_n + ( 1 - \gamma_n) x_n, 
\end{array}\right.
		\end{equation}
		
where  $\{\alpha_n\}_{n\in \mathbb{N}} $,$\{\beta_n\}_{n\in \mathbb{N}} $ and $\{\gamma_n\}_{n\in \mathbb{N}} $ are  arbitrary sequences in $[0,1)$ satisfying some appropriate conditions. 

The Opial property  is a powerful tool to derive weak or strong convergence of iterative sequences. \cite{Opial}  A Banach space $E$ is said to satisfy the Opial property if the sequence $\{x_n\}_{n\in \mathbb{N}}$ in $E$ converges weakly  to $x \in E,$ then $$\limsup_{n\to\infty} \| x_n-x \| < \limsup_{n \to \infty} \| x_n - y \|  \text{ for all }  y \in E \text{ and }  y \neq x.  $$

In fact, since every weakly convergent sequence is necessarily bounded, we have  $  \displaystyle\limsup_{n\to\infty} \| x_n-x \|  $  and $ \displaystyle\limsup_{n \to \infty} \| x_n - y \|  $ are finite.\\
The Banach spaces $ l^p ( 1 \leq p < \infty )$ satisfy the Opial property, but the  $ L_{p}  [0,2\pi] $ $( 1 \leq p < \infty , p \neq 2)$ spaces do not have.\\

Working with a Bregman distance $ D_{f} $ with respect to $ f $, the following Bregman Opial-like inequality holds for every Banach space $E$: 
$$\limsup_{n\to\infty} D_{f} ( x_n , x ) < \limsup_{n \to \infty} D_{f} ( x_n , y ), $$ 
whenever $ x_{n} \rightharpoonup x \neq y. $ See Lemma \ref{Bregman alpha01} for details. The Bregman-Opial property suggests us to introduce the notions of Bregman generalized $\alpha$-nonexpansive-like mappings.\\

Next, we recall the definition of a Bregman distance which is not a distance in the usual sense. Let $E$ be a Banach space and let $f:E\to \mathbb{R}$ be a strictly convex and {G$\hat{a}$teaux differentiable function.} \emph{The Bregman distance} \cite{Chen} corresponding to $f$ is the function $ D_f : E\times E \rightarrow \mathbb{R} $ defined by
\begin{equation} \label{Bregman}
{ D_f(x , y) = f(x) - f(y) - \langle x - y , \bigtriangledown f( y )\rangle}
\end{equation}
for all $ x , y \in E.$ It is clear that $D_f(x , y) \geq 0 $ for all $ x , y \in E.$ In general, $ D_{f} $ is not symmetric and it does not satisfy the triangle inequality. Clearly, $  D_{f} (x,x) = 0 $, but $  D_{f} (x, y) = 0 $ may not imply $ x = y $ as it happens, for instance, when $ f $ is a linear function on $E.$
 \vskip 2mm
In that case when $E$ is a smooth Banach space, setting $f(x) = \Vert x \Vert ^{2} $ for all $x \in E$, we obtain that $ \nabla f(x) = 2Jx $ for all for all $x \in E.$ Here $J$ is the normalized duality mapping from $E$ into $E^{*}.$ Hence, $D_{f} (x, y) = \phi(x, y)$ as \cite{Pang}

\begin{equation} \label{duality mapping}
{ D_f(x , y) = \phi(x,y) := \Vert x \Vert ^{2} - 2 \langle x , Jy \rangle + \Vert y \Vert ^{2} , \forall x , y \in E.}
\end{equation}\vskip 2mm

If $E$ is a Hilbert space, \eqref{duality mapping} reduces to $ D_{f} (x, y) = \Vert x - y \Vert ^{2} .$ \vskip 2mm

Let $ f: E\to \mathbb{R} $ be a strictly convex and
{G$\hat{a}$teaux differentiable function,} and $C \subseteq E$ be nonempty. Let $T:C\to E $ be a mapping. The fixed point set of $ T $ is  denoted by $ F(T):= \{p \in C : p = Tp \} .$ \vskip 2mm

\begin{itemize}
\item[$\bullet$] $ T $ is said to be Bregman nonexpansive if \vskip 2mm
\begin{center}
$ D_f (Tx , Ty ) \leq D_f ( x , y ),  \forall x, y \in C;$
\end{center}
\vskip 2mm
\item[$\bullet$] $ T $ is said to be Bregman quasi-nonexpansive if $ F(T) \neq \emptyset $ and \vskip 2mm
\begin{center}
 $ D_f ( p , Tx) \leq D_f (p , x), \forall x \in C , \forall p \in F(T);$
\end{center}
\vskip 2mm
\item[$\bullet$] $ T $ is said to be Bregman skew quasi-nonexpansive if $ F(T) \neq \emptyset $ and \vskip 2mm
\begin{center}
 $ D_f ( Tx , p) \leq D_f (x , p), \forall x \in C , \forall p \in F(T);$
\end{center}
\vskip 2mm
\item[$\bullet$] $ T $ is said to be Bregman nonspreading if \vskip 2mm
\begin{center}
$ D_f (Tx , Ty )+ D_f (Ty , Tx) \leq D_f ( Tx , y )+ D_f( Ty, x),  \forall x, y \in C.$
\end{center}
\vskip 2mm
\end{itemize}

In this paper, we propose a new notion of \textit{Bregman generalized $\alpha$-nonexpansive mappings} by using Bregman distances as follow :

A mapping $ T $ is said to be \textit{Bregman generalized $\alpha$-nonexpansive} if $ \alpha \in [0, 1) $ and 
\begin{equation} \label{Bregman alpha}
D_f (Tx , Ty) \leq \alpha D_f (Tx , y) + \alpha D_f (x,Ty) + (1 - 2 \alpha ) D_f (x , y) ,\,\,\, \forall x, y \in C. 
\end{equation}
\vskip 2mm
Let us give an example of a  Bregman generalized $\alpha$-nonexpansive mapping with nonempty fixed point set.

\begin{exmp}
Let $ f : \mathbb{R}\to \mathbb{R} $ be defined by $ f(x) = x^{4}. $ The associated Bregman distance is given by 
$$\aligned
 D_f (x,y) &= x^{4} - y^{4} - (x - y)(4y^{3}) \\
 &= x^{4} + 3y^{4} - 4xy^{3}   \,\,\,\,\,\, \forall x, y \in \mathbb{R}.\\
\endaligned $$
We define a mapping  $ T : [0,0.9] \rightarrow [0,0.9] $ by
$$ Tx = x^2,  \,\,\,\,\,\, \forall x \in [0,0.9].$$

Then $ T $ are not an $\alpha$-nonexpansive mapping and a generalized $\alpha$-nonexpansive mapping, but it is a Bregman generalized $\alpha$-nonexpansive mapping relative to $ D_f $ in the sense of (\ref{Bregman alpha}). We have  $ F(T) = \lbrace 0 \rbrace .$ Plainly, $ T $ is not nonexpansive.

However, $ T $ is Bregman generalized $\alpha$-nonexpansive. Indeed, let $ x \in [0,0.9] $ be fixed. We define mapping $ f : [0,0.9] \rightarrow  [0,0.9] $ by
$$ f(x,y) = \alpha D_f (Tx , y) + \alpha D_f (x,Ty) + (1 - 2 \alpha ) D_f (x , y) -  D_f (Tx , Ty)  , \,\,\,\, \forall y \in [0,0.9], $$ 
where
$$\aligned D_f(Tx,y) &= f(Tx) - f(y) - \langle Tx - y , \nabla f(y) \rangle \\
&= x^8 + 3y^4 - 4 x^2 y^3.\\
D_f(x,Ty) &= f(x) - f(Ty) - \langle x - Ty , \nabla f(Ty) \rangle \\
&= x^4 + 3y^8 - 4 x y^6.\\
D_f(x,y) &= f(x) - f(y) - \langle x - y , \nabla f(y) \rangle \\
&= x^4 + 3y^4 - 4 x y^3.\\
D_f(Tx,Ty) &= f(Tx) - f(Ty) - \langle Tx - Ty , \nabla f(Ty) \rangle \\
&= x^8 + 3y^8 - 4 x^2 y^6.\\
\endaligned $$
Then
$$\aligned 
f(x,y )&=\alpha D_f (Tx , y) + \alpha D_f (x,Ty) + (1 - 2 \alpha ) D_f (x , y) -  D_f (Tx , Ty)\\
&= \alpha (x^8 + 3y^4 - 4 x^2 y^3) + \alpha (x^4 + 3y^8 - 4 x y^6) + (1 - 2 \alpha ) (x^4 + 3y^4 - 4 x y^3) -  (x^8 + 3y^8 - 4 x^2 y^6 )\\
\endaligned $$

Thus we have $ f(x,y) \geq 0,  \forall x, y \in [0,0.9] ,\,\, \alpha \in [\frac{1}{2},1) $ and hence $ T $ is a Bregman generalized $\alpha$-nonexpansive mapping. 
\end{exmp}
The paper is organize as follows. In Section 2, we collect some basic knowledge of Bregman distances. In Section 3, using the Bregman-Opial property, we obtain approximation fixed point theorems. In Sections 4 and 5, we investigate weak and strong convergence of the Ishikawa and Bregman Noor's type iteration for Bregman generalized $\alpha$-nonexpansive mappings. In the last section we show the numerical example.  

\section{PRELIMINARIES}
In this section, we collect several definitions and results, which are used in the following sections. Throughout this paper, let $ E $ be a real Banach space and let $ f:E\to \mathbb{R}$ be a convex function. For any $ x $ in $ E ,$ \emph {the gradient} $\nabla f(x)$ is defined to be the linear functional in $ E ^{*}$ such that
$$ \langle y ,\nabla f(x) \rangle =\lim_{t \rightarrow0}\cfrac{f(x+ty)-f(x)}{t}, \indent \forall y \in E. $$ 
The function $f$ is said to be \emph{G$\hat{a}$teaux differentiable at $x$} if $  \langle y ,\nabla f(x) \rangle\in E^{*}$ for all $ x \in E $. In this case, we denote $ \langle y,\nabla f(x) \rangle$ by $ \nabla f(x)$ is well-defined and $ f $ is \emph{G$\hat{a}$teaux differentiable} if it is \emph{G$\hat{a}$teaux differentiable} everywhere on $E.$ The function $ f $ is also said to be \emph{Fr$\acute{e}$chet differentiable} at $x$ if for all $ \epsilon > 0, $ there exists $ \delta > 0 $ such that $ \Vert y - x \Vert \leq \delta  $ implies
$$ \vert f(y) - f(x) - \langle y - x , \nabla f(x) \rangle | \leq \epsilon \Vert y - x \Vert .$$  
The function $f$ is said to be \emph{Fr$\acute{e}$chet differentiable} if it is \emph{Fr$\acute{e}$chet differentiable} everywhere.
The function $f$ is said to be \emph{convex} on a nonempty subset $E$ of $D_f$ if
\begin{equation}\label{covex}
 f ( \alpha x + ( 1 - \alpha ) y ) \leq \alpha f ( x) + ( 1 - \alpha ) f (y ), 
\end{equation}
for all $x, y \in E, \,\, \alpha \in (0,1).$ It is also said to be \emph{strictly convex} if the strict inequality holds in \eqref{covex} for all $x, y \in dom g $ with $x \neq y$ and $ \alpha \in (0,1).$\\

Let $B$ be the closed unit ball with radius $ r > 0 $ centered at $ 0 \in E $ is denoted by $ rB $ of a Banach space $E.$ A function $ f:E\to \mathbb{R}$ is said to be \emph{strongly coercive} if 
\begin{center}
$ \displaystyle\lim_{\Vert x_{n}\Vert \rightarrow \infty}\cfrac{f(x_{n})}{\Vert x_{n} \Vert} = \infty. $
\end{center} \vskip 2mm

It is also said to be \emph{bounded on bounded sets} or \emph{locally bounded} if $ f(rB) $ is bounded for each $ r > 0.$ Let $ S_E = \{x \in E: \Vert x \Vert = 1 \} $ be the unit sphere of $E.$ 

Let $ E $ is a real Banach space with the norm $ \Vert \cdot \Vert $ and the  dual space $ E^{*}.$ A function $ f:E\to \mathbb{R}$ is said to be \textit{proper} if the set  $ \{ x \in E : f(x)< +\infty \} \neq \emptyset .$
And then a function $ f:E\to \mathbb{R}$ is said to be \emph{uniformly convex on bounded subsets of $E$ or locally uniformly convex on $ E,$} \cite{Zalinescu} if $ \rho_{r}(t) > 0 $ for all $ r, t >0  $, where $\rho_{r} : [0,+\infty ) \rightarrow [0,+\infty ], $ called uniform convexity of $f,$ defined by
 \vskip 2mm
\begin{center}
$ \rho_{r}(t)= \displaystyle\inf_{x,y\in rB,\Vert x - y \Vert = t ,\alpha\in(0,1)} \dfrac{\alpha f(x)+(1-\alpha)f(y)- f(\alpha x + (1 - \alpha )y )}{\alpha(1 - \alpha)}, \,\,\,\, \forall t \geq 0. $ 
\end{center}

It is known $ \rho_{r}(t)$ are nondecreasing function. The function $f$ is also said to be\emph{ locally uniformly smooth on $E$}  \cite{Zalinescu} if the function $ \sigma_{r} : [0,+\infty ) \rightarrow [0,+\infty ],$ defined by \vskip 2mm
 \begin{center}
$ \sigma_{r}(t)= \displaystyle\sup_{x\in rB,y\in S_{E},\alpha\in(0,1)} \dfrac{\alpha f(x+(1-\alpha)ty)+ (1-\alpha)f(x-\alpha t y ) - f(x)}{\alpha(1 - \alpha)}, $
\end{center}

\indent satisfies
$$\lim_{t\downarrow 0}\cfrac{\sigma_{r}(t)}{ t } = 0 , \,\,\,\, \forall r > 0. $$\vskip 2mm
For an uniformly convex on bounded subsets of $E$ map $ f:E\to \mathbb{R},$ we have
\begin{equation} \label{locally}
f(\alpha x + (1 - \alpha ) y ) \leq \alpha f(x) + ( 1 - \alpha )f(y)- \alpha(1 - \alpha) \rho_{r}(\Vert x - y \Vert ), 
\end{equation}

for all $x, y$ in $rB$ and for all $\alpha \in (0, 1).$\\

Let $E$ be a Banach space and let $ f:E\to \mathbb{R}$ a strictly convex and \emph{G$\hat{a}$teaux differentiable function.} By (\ref{Bregman}), the Bregman distance satisfies \cite{Chen}
\begin{equation} \label{Bregmanxyz}
D_{f} (x,z) = D_{f} (x,y)+D_{f} (y,z) + \langle x - y , \nabla f(y) - \nabla f(z)\rangle , \,\,\,\, \forall x,y,z \in E. 
\end{equation}

In particular,
\begin{equation} \label{Bregman commu}
D_{f} ( x,y) = - D_{f} (y,x) + \langle y - x,\nabla f(y) - \nabla f( x )\rangle , \,\,\,\, \forall x,y \in E. 
\end{equation}\vskip 2mm

\begin{lem} \cite{Naraghirad02}. \label{Naraghirad02} 
Let $E$ be a Banach space and $f:E\to \mathbb{R}$ a {G$\hat{a}$teaux differentiable function}  which is uniformly convex on bounded subsets of $E.$ Let $\{x_n\}_{n\in \mathbb{N}}$ and $\{y_n\}_{n\in \mathbb{N}}$ be bounded sequences in $E.$
Then the following assertions are equivalent.\vskip 2mm 
(1) $ \displaystyle \lim_{n\to \infty} D_f(x_n,y_n)=0.$ \vskip 2mm 
(2) $ \displaystyle \lim_{n\to \infty} \|x_n-y_n\parallel=0.$
\end{lem}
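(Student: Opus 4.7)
The plan is to prove the equivalence by deriving matching upper and lower bounds that control $D_f(x,y)$ in terms of $\|x-y\|$ on a fixed bounded set $rB$ containing both sequences.

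For the direction $(2) \Rightarrow (1)$, I would expand the Bregman distance as in (\ref{Bregman}) and combine it with the convexity inequality $f(x) - f(y) \leq \langle x-y, \nabla f(x)\rangle$ (which follows from Gâteaux differentiability and convexity) to obtain the upper estimate
\[
D_f(x,y) \leq \langle x - y, \nabla f(x) - \nabla f(y)\rangle \leq \|x-y\| \cdot \|\nabla f(x) - \nabla f(y)\|_{E^*}.
\]
Fixing $r>0$ with $\{x_n\}\cup\{y_n\}\subset rB$, a standard fact in convex analysis (a finite, convex, G\^{a}teaux differentiable function on $E$ is locally Lipschitz and hence $\nabla f$ is bounded on bounded subsets) guarantees $\|\nabla f(x_n)\|_{E^*}, \|\nabla f(y_n)\|_{E^*} \leq M$ for some $M>0$. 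Therefore $D_f(x_n, y_n) \leq 2M\|x_n - y_n\| \to 0$.

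For the harder direction $(1) \Rightarrow (2)$, the key step is to establish the lower bound
\[
D_f(x,y) \geq \rho_r(\|x-y\|) \qquad \text{for all } x,y \in rB.
\]
To derive this I would start from the uniform convexity inequality (\ref{locally}), which, after setting the convex combination $\alpha x + (1-\alpha)y = y + \alpha(x-y)$ and rearranging, reads
\[
\frac{f(y + \alpha(x-y)) - f(y)}{\alpha} \leq f(x) - f(y) - (1-\alpha)\rho_r(\|x-y\|).
\]
Letting $\alpha \to 0^+$, the left-hand side converges to $\langle x - y, \nabla f(y)\rangle$ by the definition of the G\^{a}teaux derivative, while the right-hand side tends to $f(x) - f(y) - \rho_r(\|x-y\|)$. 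Rearranging yields the announced lower bound.

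Given this lower bound, the implication $(1) \Rightarrow (2)$ follows by contradiction: if $\|x_n - y_n\| \not\to 0$, some subsequence satisfies $\|x_{n_k} - y_{n_k}\| \geq \varepsilon > 0$; monotonicity and positivity of $\rho_r$ then force $D_f(x_{n_k}, y_{n_k}) \geq \rho_r(\varepsilon) > 0$, contradicting $D_f(x_n, y_n) \to 0$. The main obstacle is cleanly handling the limit $\alpha \to 0^+$ in the uniform convexity inequality (observing that $\rho_r(\|x-y\|)$ is independent of $\alpha$) and, for the easier direction, ensuring boundedness of $\nabla f$ on bounded sets; once these technicalities are in place, the two one-sided estimates package together into the stated equivalence.
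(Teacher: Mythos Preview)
The paper does not actually prove this lemma; it is quoted verbatim from \cite{Naraghirad02} with no argument supplied, so there is no in-paper proof to compare your proposal against.

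Your outline is the standard route and the harder implication $(1)\Rightarrow(2)$ is handled correctly: letting $\alpha\to 0^{+}$ in (\ref{locally}) yields $D_f(x,y)\ge \rho_r(\|x-y\|)$ on $rB$, and positivity plus monotonicity of $\rho_r$ then forces $\|x_n-y_n\|\to 0$. For $(2)\Rightarrow(1)$ your inequality $D_f(x,y)\le \langle x-y,\nabla f(x)-\nabla f(y)\rangle$ is fine, but be aware that the hypotheses \emph{as stated in the lemma} (G\^ateaux differentiability and uniform convexity on bounded sets) do not by themselves guarantee that $\nabla f$ is bounded on bounded sets in an infinite-dimensional Banach space; a real-valued convex function need not be continuous, let alone locally Lipschitz, without some local boundedness assumption. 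In \cite{Naraghirad02} and throughout the present paper the standing assumption that $f$ is bounded on bounded sets (see e.g.\ Propositions~\ref{Zalinescu01}--\ref{Zalinescu02}) is what supplies this, since then $f$ is locally Lipschitz and $\|\nabla f(\cdot)\|_{E^*}$ is dominated by the local Lipschitz constant. You should either invoke that extra hypothesis explicitly or note that the lemma, read literally, is slightly under-hypothesised for the easy direction.
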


The following result was first proved in \cite{Butnariu02} (see also \cite{Kohsaka01}). \vskip 2mm

\begin{lem} \cite{Huang}. \label{Huang}
Let $E$ be a Banach space and let $f:E\to \mathbb{R}$ be a strictly convex and {G$\hat{a}$teaux differentiable function.} Suppose that $\{x_n\}_{n\in \mathbb{N}}$ is a sequence in $E$ such that $x_{n} \rightharpoonup x $ for
some $x \in E.$ Then
$$ \displaystyle \limsup_{n \rightarrow \infty} D_{f} (x_{n}, x )< \displaystyle \limsup_{n \rightarrow \infty} D_{f} (x_{n}, y ), $$ \vskip 2mm

for all $y$ in the interior of the domain of $f$ with $ y \neq x.$
\end{lem}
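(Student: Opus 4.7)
The plan is to reduce the claim to computing the limit of the difference $D_f(x_n,x)-D_f(x_n,y)$ and then invoking strict convexity to obtain a strictly positive ``gap.'' The unpleasant, nonlinear $f(x_n)$ term cancels in this difference, which is the reason the proof will work under only weak convergence.

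Concretely, expanding the definition in \eqref{Bregman} gives
\[
D_f(x_n,x)-D_f(x_n,y)=f(y)-f(x)+\langle x,\nabla f(x)\rangle-\langle y,\nabla f(y)\rangle+\langle x_n,\nabla f(y)-\nabla f(x)\rangle,
\]
so the $n$-dependence collapses to a single linear functional evaluated along $\{x_n\}$. Since $x_n\rightharpoonup x$, I would pass to the limit term-by-term, using $\langle x_n,\nabla f(y)-\nabla f(x)\rangle\to\langle x,\nabla f(y)-\nabla f(x)\rangle$. After the obvious simplifications this yields
\[
\lim_{n\to\infty}\bigl(D_f(x_n,x)-D_f(x_n,y)\bigr)=f(y)-f(x)+\langle x-y,\nabla f(y)\rangle=-D_f(x,y).
\]
Because this is an \emph{honest} limit (the sequences might blow up individually, but their difference converges), the usual rule $\limsup(a_n+c+\varepsilon_n)=\limsup a_n+c$ applies, giving $\limsup_n D_f(x_n,x)=\limsup_n D_f(x_n,y)-D_f(x,y)$.

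The final step — and the only place where something nontrivial is used — is to conclude $D_f(x,y)>0$. This is the main obstacle, because strict convexity alone does not immediately give the strict tangent-line inequality; one needs differentiability of $f$ at the second argument, which is guaranteed here by the hypothesis that $y$ lies in the interior of $\mathrm{dom}\,f$. I would argue as follows: if we had $D_f(x,y)=0$, then the convex function $g(t):=f(y+t(x-y))-f(y)-t\langle x-y,\nabla f(y)\rangle$ would satisfy $g(0)=g(1)=0$ with $g'(0)=0$, and $g\ge 0$ by convexity, which combined with strict convexity of $f$ on the segment from $y$ to $x$ forces $x=y$, a contradiction. Hence $D_f(x,y)>0$, and the strict inequality of the lemma follows directly from the displayed identity above.

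In short: an algebraic identity plus weak-to-weak continuity of $x\mapsto\langle x,\xi\rangle$ handles the analytic content, and strict convexity at the interior point $y$ supplies the strict positivity; the only delicate point to double-check is that the $\limsup$ on the right-hand side is actually finite (so the subtraction makes sense), which is automatic when $\{x_n\}$ is bounded, as any weakly convergent sequence is.
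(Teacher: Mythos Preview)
Your argument is correct and is the standard proof of the Bregman--Opial property: cancel the $f(x_n)$ term in the difference $D_f(x_n,x)-D_f(x_n,y)$, pass to the limit using weak convergence on the single remaining linear term, identify the limit as $-D_f(x,y)$, and then invoke strict convexity of $f$ (together with G\^ateaux differentiability at $y$) to get $D_f(x,y)>0$. Note that the paper does not supply its own proof of this lemma --- it is quoted from \cite{Huang} as a preliminary result --- so there is nothing in the paper to compare your approach against.

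One minor quibble: your closing remark that finiteness of $\limsup_n D_f(x_n,y)$ is ``automatic when $\{x_n\}$ is bounded'' is not quite complete. Boundedness of $\{x_n\}$ controls the linear term $\langle x_n-y,\nabla f(y)\rangle$, but to bound $f(x_n)$ you also need $f$ to be bounded on bounded sets, which is not among the hypotheses of the lemma as stated (though it is assumed in essentially every subsequent application in the paper). Without this, both $\limsup$'s could equal $+\infty$ and the strict inequality would be vacuous; this is really a defect of the lemma's statement rather than of your proof.
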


We call a function $ f:E\to (-\infty , +\infty ] $ \emph{lower semicontinuous}  if $ \{ x \in E : f(x)\leq r \}$ is closed for all $r \in \mathbb{R}. $ For a proper, convex function and lower semicontinuous $f:E\to \mathbb{R},$ the \emph{subdifferential} $ \partial f$ of $f$ is defined by 

$$ \partial f(x) = \{ x^{*} \in E^{*} : f(x) + \langle y - x , x^{*}\rangle \leq f(y),\,\,\,\, \forall y \in E \}, $$ \vskip 2mm 

for all $x \in E.$ It is well known that $\partial f \subset E \times E^{*}$ is maximaly monotone \cite{Rockafellar02}. For any proper convex function and lower semicontinuous $ f:E\to (-\infty , +\infty ], $ \emph{the (Fenchel) conjugate function} $f^{*}$ of $f$ is defined by \vskip 2mm 

\begin{center}
$ f^{*} (x^{*})= \displaystyle \sup_{x \in E} \{ \langle x, x^{*} \rangle - f(x)\},\,\,\,\,\,\forall x^{*} \in E^{*}. $
\end{center}\vskip 2mm

It is well known that
\begin{center}
$ f(x) + f^{*}(x^{*}) \geq \langle x,x^{*}\rangle, \,\,\,\, \forall(x,x^{*}) \in E \times E^{*}.$
\end{center} \vskip 2mm

It is also known that $ ( x,x^{*}) \in \partial f \,\,\, \text {is  equivalent to} $
\begin{equation} \label{conjugate}
f(x) + f^{*}(x^{*}) = \langle x , x^{*} \rangle .
\end{equation}  

We also know that if $ f:E\to (-\infty , +\infty ] $  is a proper  convex function and lower semicontinuous, then $ f^{*}:E^{*} \to (-\infty , +\infty ] $ be a proper convex function and $\text {weak}^{*}$ lower semicontinuous.\vskip 2mm 


\begin{defn}\cite{Kohsaka01} \label{Kohsaka01}
Let $E$ be a Banach space. Then a function $f:E\to \mathbb{R}$ is said to be a \emph {Bregman function} if the following conditions are satisfied:\vskip 2mm
(1) $f$  is continuous, strictly convex and  \emph{G$\hat{a}$teaux differentiable};\vskip 2mm
(2) the set $\{ y \in E : D_{f} (x,y) \leq r \} $  is bounded for all $x$ in $E$ and $r > 0.$ 
\end{defn}\vskip 2mm

The following lemma follows from  Butnariu and Iusem \cite{Butnariu01} and {Z\v{a}linscu} \cite{Zalinescu}:

\begin{lem} \label{Zalinescu}
 Let $E$ be a reflexive Banach space and let $f:E\to \mathbb{R}$ be a strongly coercive Bregman function. Then\vskip 2mm 
(1)  $ \nabla f:E\to E^{*} $ is one-to-one, onto and norm-to-$\text {weak}^{*}$ continuous; \vskip 2mm
(2)  $ \langle x-y, \nabla f(x) - \nabla (y) \rangle = 0 $ if and only if $ x = y; $\vskip 2mm
(3)  $\{ x \in E : D_{f} (x,y) \leq r \} $ is bounded for all $y$ in $E$ and $r > 0;$ \vskip 2mm
(4) dom $ f^{*} = E^{*},\,\, f^{*} $ is {G$\hat{a}$teaux differentiable function} and $ \nabla f^{*}= (\nabla f)^{-1}.$\vskip 2mm
\end{lem}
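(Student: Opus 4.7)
The plan is to prove the four items in an order that maximizes reuse: first the strict monotonicity underlying (2), then use it together with strong coercivity to establish (1), then exploit strong coercivity directly for (3), and finally obtain (4) from (1) via Fenchel duality.

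First I would dispatch (2). Since $f$ is G\^ateaux differentiable and strictly convex, the classical characterization of strict convexity via the gradient gives strict monotonicity: $\langle x-y,\nabla f(x)-\nabla f(y)\rangle>0$ for $x\ne y$, with zero precisely when $x=y$. This yields (2) immediately, and incidentally gives injectivity of $\nabla f$ for (1).

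For the remaining half of (1), I would show surjectivity of $\nabla f$ using strong coercivity and reflexivity. Fix $x^*\in E^*$ and consider $g(x):=f(x)-\langle x,x^*\rangle$. Strong coercivity of $f$ (i.e. $f(x)/\|x\|\to\infty$) forces $g(x)\to+\infty$ as $\|x\|\to\infty$, so $g$ is coercive. Since $f$ is continuous and convex, so is $g$; hence $g$ is also weakly lower semicontinuous. In the reflexive space $E$, closed bounded convex subsets are weakly compact, so $g$ attains its minimum at some $x_0\in E$. At a minimizer, $\nabla g(x_0)=\nabla f(x_0)-x^*=0$, proving onto. For the norm-to-weak$^*$ continuity, I would use the standard fact that the gradient of a continuous convex function on a Banach space is (locally bounded and) norm-to-weak$^*$ upper semicontinuous as a set-valued map, which reduces to single-valued norm-to-weak$^*$ continuity once $\nabla f$ is everywhere defined as a singleton.

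For (3), I argue directly from the definition of $D_f$. If $D_f(x,y)\le r$ for fixed $y$ and varying $x$, then
\[
f(x)\le r+f(y)+\langle x-y,\nabla f(y)\rangle \le r+f(y)+\|\nabla f(y)\|\cdot\|x-y\|.
\]
Dividing by $\|x\|$ and letting $\|x\|\to\infty$ contradicts strong coercivity $f(x)/\|x\|\to\infty$ unless $\|x\|$ is bounded; so the sublevel set is bounded.

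For (4), I would use (1) in conjunction with the Fenchel--Young identity (\ref{conjugate}). For every $x^*\in E^*$, surjectivity of $\nabla f$ produces $x\in E$ with $\nabla f(x)=x^*$, i.e. $x^*\in\partial f(x)$; then $f^*(x^*)=\langle x,x^*\rangle-f(x)<\infty$, giving $\mathrm{dom}\,f^*=E^*$. Injectivity of $\nabla f$ then ensures that the subdifferential $\partial f^*(x^*)$ is the singleton $\{(\nabla f)^{-1}(x^*)\}$: indeed, $z\in\partial f^*(x^*)$ iff $x^*\in\partial f(z)=\{\nabla f(z)\}$. A proper convex lower semicontinuous function on a Banach space is G\^ateaux differentiable at a point iff its subdifferential there is a singleton; this applies to $f^*$ at every $x^*\in E^*$ and yields both G\^ateaux differentiability of $f^*$ and the identity $\nabla f^*=(\nabla f)^{-1}$.

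The main obstacle I expect is the norm-to-weak$^*$ continuity in (1) and the G\^ateaux differentiability of $f^*$ in (4); both are not elementary consequences of strict convexity plus coercivity but rest on deeper convex-analytic facts (local boundedness of subdifferentials of continuous convex functions, and the subdifferential-singleton characterization of G\^ateaux differentiability for l.s.c.\ convex functions), which is exactly why the authors cite Butnariu--Iusem \cite{Butnariu01} and Z\u{a}linescu \cite{Zalinescu} rather than reproving them.
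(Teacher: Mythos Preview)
The paper does not prove this lemma at all; it merely states it and attributes it to Butnariu--Iusem \cite{Butnariu01} and Z\u{a}linescu \cite{Zalinescu}, so there is no in-paper argument to compare against. Your sketch is the standard route taken in those references and is essentially correct: strict convexity gives (2) and injectivity, strong coercivity plus reflexivity gives surjectivity via minimization of $f-\langle\cdot,x^*\rangle$, strong coercivity bounds the $D_f$-sublevel sets for (3), and Fenchel inversion yields (4).

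One technical point worth tightening in (4): the implication ``$\partial f^*(x^*)$ a singleton $\Rightarrow$ $f^*$ G\^ateaux differentiable at $x^*$'' is not valid for arbitrary proper l.s.c.\ convex functions; it requires continuity of $f^*$ at $x^*$. You have this, but only implicitly: once $\mathrm{dom}\,f^*=E^*$, the convex l.s.c.\ function $f^*$ is finite everywhere on the Banach space $E^*$, hence (by the standard barrelled-space result) continuous, and then the singleton-subdifferential criterion applies. You should make that step explicit rather than stating the equivalence in its false generality. With that fix your argument is complete, and your closing remark correctly identifies why the authors defer to the literature for the two delicate continuity/differentiability claims.
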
\vskip 2mm

Furthermore, let $E$ be a Banach space and let $C$ is a nonempty, closed convex subset of a reflexive Banach space $E.$ Let $ f:E\to \mathbb{R}$ be a strictly convex and {G$\hat{a}$teaux differentiable function.} Then, we know from \cite{Naraghirad01} that for $x \in E$ and $x_{0} \in C,$ we have

$$D_{f}( x_{0},x)= \displaystyle \min_{y\in C} D_f(y,x) .$$\vskip 2mm

The \emph{Bregman projection} $proj_C^f$ from $E$ onto $C$ is defined by $proj_C^f(x)= x_{0}$ for all $ x\in E.$ It is well known that $ x_{0} = proj_C^f(x) $ if and only if 

\begin{equation} \label{Bregman projection}
\langle y - x_{0}, \nabla f(x) - \nabla f(x_{0}) \rangle \leq 0, \,\,\, \forall y \in C. 
\end{equation}  \vskip 2mm

It is also known that $proj_C^f$ from $E$ onto $C$ has the following property:

\begin{equation} \label{Bregman projection02}
D_{f}(y,proj_C^f(x)) + D_f(proj_C^f(x),x) \leq D_{f}(y,x), \,\,\, \forall y \in C, \forall x \in E;
\end{equation} \vskip 2mm 

see, for instance,  \cite{Butnariu01} for more details.\vskip 2mm
\begin{prop}  \cite{Zalinescu} \label{Zalinescu01}
Let $E$ be a reflexive Banach space and let $f:E\to \mathbb{R}$ be a convex function which is bounded on bounded sets. The following assertions are equivalent.\vskip 2mm 
(1)  $f$ is strongly coercive and uniformly convex on bounded subsets of $E;$\vskip 2mm
(2) dom $ f^{*} = E^{*},\,\, f^{*} $ is bounded on bounded sets and locally  uniformly smooth on $E;$\vskip 2mm
(3) dom $ f^{*} = E^{*},\,\, f^{*} $ is {Fr$\acute{e}$chet differentiable} and $\nabla f^{*}$ is uniformly norm-to-norm continuous on bounded subsets of $E^{*}.$\vskip 2mm
\end{prop}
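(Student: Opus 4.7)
The plan is to establish the equivalence via the cyclic implications $(1) \Rightarrow (2) \Rightarrow (3) \Rightarrow (1)$, with the Fenchel biduality $f^{**} = f$---valid because $E$ is reflexive and $f$ is proper, convex, lower semicontinuous and bounded on bounded sets---providing the bridge between properties of $f$ and properties of $f^*$.

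For $(1) \Rightarrow (2)$, the strong coercivity $f(x)/\|x\| \to \infty$ combined with $|\langle x, x^*\rangle| \leq \|x^*\|\,\|x\|$ immediately forces the supremum defining $f^*(x^*)$ to be finite for every $x^* \in E^*$, so $\text{dom}\,f^* = E^*$. Boundedness of $f^*$ on bounded subsets of $E^*$ follows by combining this with boundedness of $f$ on bounded sets: for $\|x^*\| \leq M$, strong coercivity confines the supremum to a ball of radius depending only on $M$, on which $f$ is bounded. The crucial step is translating uniform convexity of $f$, the inequality \eqref{locally} with $\rho_r(t) > 0$, into local uniform smoothness of $f^*$. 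Using the Fenchel--Young equality \eqref{conjugate}, taking $x_i^* \in \partial f(x_i)$ and $\bar x = (x_1+x_2)/2$, uniform convexity gives
$$ f(\bar x) \leq \tfrac{1}{2}f(x_1) + \tfrac{1}{2}f(x_2) - \tfrac{1}{4}\rho_r(\|x_1-x_2\|), $$
and dualizing via $f^{**}=f$ yields the quantitative decay $\sigma_r^{f^*}(t)/t \to 0$ that defines local uniform smoothness of $f^*$.

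For $(2) \Rightarrow (3)$, local uniform smoothness of $f^*$ together with its convexity is exactly the classical Fr\'echet differentiability criterion for convex functions, since the symmetric difference quotient appearing in the modulus $\sigma_r$ bounds the linear error in the first-order Taylor expansion. Uniform norm-to-norm continuity of $\nabla f^*$ on bounded subsets then follows because the oscillation $\|\nabla f^*(x^*) - \nabla f^*(y^*)\|$ is controlled by a quantity governed by $\sigma_r(\|x^*-y^*\|)/\|x^*-y^*\|$, which goes to zero uniformly for $x^*, y^*$ in any bounded set. For $(3) \Rightarrow (1)$, I would apply the same duality machinery in reverse, using $f = (f^*)^*$: Fr\'echet differentiability of $f^*$ with uniformly norm-to-norm continuous gradient on bounded sets is local uniform smoothness of $f^*$, whose Fenchel dual is uniform convexity of $f$ on bounded subsets. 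Strong coercivity of $f$ then follows from $\text{dom}\,f^* = E^*$ by contradiction, since a sequence $\|x_n\| \to \infty$ with $f(x_n)/\|x_n\|$ bounded would produce, via Hahn--Banach, a functional $x^*$ with $f^*(x^*) = +\infty$.

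The main obstacle I anticipate is the quantitative duality between the moduli $\rho_r$ and $\sigma_r$: one must transfer a convexity modulus on a ball of radius $r$ in $E$ into a smoothness modulus on a ball of some appropriate radius $r'$ in $E^*$, and vice versa. Matching these radii requires exploiting that $\nabla f$ sends bounded sets to bounded sets---which is ensured by the local boundedness of $f$ together with reflexivity---and carefully tracking how the support points appearing in the Fenchel--Young equality relate to the underlying primal variables. Once this correspondence is made precise, the duality of the moduli is essentially a reformulation of the biduality $f^{**} = f$.
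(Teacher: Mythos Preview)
The paper does not prove this proposition at all: it is quoted verbatim from Z\u{a}linescu's monograph \cite{Zalinescu} and stated without proof, as indicated by the citation attached directly to the proposition heading. There is therefore no ``paper's own proof'' against which to compare your attempt.

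That said, your outline follows the standard route one finds in the convex-analysis literature (and in Z\u{a}linescu's book in particular): establish the cycle $(1)\Rightarrow(2)\Rightarrow(3)\Rightarrow(1)$ via Fenchel biduality $f^{**}=f$, using strong coercivity to get $\mathrm{dom}\,f^*=E^*$, and then translating the modulus of uniform convexity $\rho_r$ into the modulus of uniform smoothness $\sigma_r$ for the conjugate. You have correctly identified the one genuinely delicate point, namely the quantitative correspondence between a convexity modulus on $rB\subset E$ and a smoothness modulus on $r'B^*\subset E^*$, which requires knowing that $\partial f$ (resp.\ $\nabla f^*$) maps bounded sets to bounded sets. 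Your sketch is a plausible plan, though each of the modulus-transfer steps hides nontrivial computations; in particular, the passage from \eqref{locally} to a bound on $\sigma_{r'}^{f^*}(t)$ is not a one-line dualization but requires choosing near-optimal points in the Fenchel supremum and tracking the error carefully. If you intend to include a full proof rather than cite the result, you would need to fill in those estimates explicitly.
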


\begin{prop}  \cite{Zalinescu} \label{Zalinescu02}
Let $E$ be a reflexive Banach space and $f:E\to \mathbb{R}$ a continuous convex function which is strongly coercive. Then the following assertions are equivalent.\vskip 2mm 
(1)  $f$ is bounded on bounded subsets and locally uniformly smooth on $E;$\vskip 2mm
(2) $ f^{*} $ is {Fr$\acute{e}$chet differentiable} and $\nabla f^{*}$ is uniformly norm-to-norm continuous on bounded subsets of $E.$\vskip 2mm
(3) dom $ f^{*} = E^{*},\,\, f^{*} $ is strongly coercive and uniformly convex on bounded subsets of $E.$\vskip 2mm
\end{prop}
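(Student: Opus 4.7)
The plan is to deduce Proposition~\ref{Zalinescu02} from Proposition~\ref{Zalinescu01} by applying the latter to the Fenchel conjugate $f^{*}$ on the reflexive dual space $E^{*}$, and to handle the remaining clause through the classical Asplund-type duality between the moduli of uniform smoothness $\sigma_{r}$ and uniform convexity $\rho_{r}$ introduced in Section~2.

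First I would record the preliminary normalizations: under the standing hypotheses (convexity, continuity, and strong coercivity of $f$ on a reflexive $E$), $f$ is automatically proper and lower semicontinuous, so the Fenchel--Moreau theorem gives $f^{**}=f$. Strong coercivity of $f$ further forces $\mathrm{dom}\,f^{*}=E^{*}$ and makes $f^{*}$ convex, continuous and bounded on bounded subsets of $E^{*}$. Consequently $f^{*}$ satisfies the blanket hypotheses of Proposition~\ref{Zalinescu01} in the role of the ambient convex function on the reflexive Banach space $E^{*}$.

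Next I would apply Proposition~\ref{Zalinescu01} with $g:=f^{*}$. Its three clauses translate, using reflexivity and $g^{*}=f^{**}=f$, into: (i$'$) $f^{*}$ is strongly coercive and uniformly convex on bounded subsets of $E^{*}$; (ii$'$) $\mathrm{dom}\,f=E$ and $f$ is bounded on bounded sets and locally uniformly smooth on $E$; (iii$'$) $f$ is Fr\'echet differentiable with $\nabla f$ uniformly norm-to-norm continuous on bounded subsets of $E$. Clauses (i$'$) and (ii$'$) are precisely clauses (3) and (1) of Proposition~\ref{Zalinescu02}, so the equivalence $(1)\Leftrightarrow(3)$ drops out at once.

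The step I expect to be the main obstacle is incorporating clause (2) of Proposition~\ref{Zalinescu02}, which is a Fr\'echet differentiability statement for $f^{*}$ itself: applying Proposition~\ref{Zalinescu01} to $f^{*}$ only delivers a differentiability statement for its conjugate $f$, not for $f^{*}$. To bridge this gap I would invoke the Asplund-type modulus duality, whereby the uniform convexity modulus of $f^{*}$ is controlled by the uniform smoothness modulus of $f$ through a polar transform, together with the standard fact that on a reflexive Banach space local uniform smoothness of a continuous convex function is equivalent to Fr\'echet differentiability with gradient uniformly norm-to-norm continuous on bounded sets. Applying this dual correspondence to $f^{*}$ establishes $(2)\Leftrightarrow(3)$ and completes the proof.
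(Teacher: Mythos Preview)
The paper does not prove Proposition~\ref{Zalinescu02} at all: it is quoted verbatim from Z\u{a}linescu's monograph \cite{Zalinescu} (as the citation in the statement indicates) and is used later as a black box. There is therefore no ``paper's own proof'' to compare your proposal against.

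That said, a brief comment on your sketch. Your idea of applying Proposition~\ref{Zalinescu01} to $g:=f^{*}$ on $E^{*}$ is natural and does give $(1)\Leftrightarrow(3)$ cleanly, once you check that the strong coercivity of $f$ indeed forces $f^{*}$ to be finite, continuous and bounded on bounded subsets of $E^{*}$ (this is standard but should be justified). You correctly identify that the genuine work lies in clause~(2): Proposition~\ref{Zalinescu01} applied to $f^{*}$ yields Fr\'echet differentiability of $f^{**}=f$, not of $f^{*}$. Your appeal to an ``Asplund-type modulus duality'' and to the equivalence between local uniform smoothness and uniform Fr\'echet differentiability on bounded sets is in the right direction, but as written it is a name rather than an argument; in Z\u{a}linescu's treatment these facts require several pages (the interplay of $\sigma_{r}$ and $\rho_{r}$ via polarity, together with the characterisation of uniform Fr\'echet differentiability through $\sigma_{r}(t)/t\to 0$). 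So your outline is plausible as a roadmap, but the step you flag as the ``main obstacle'' is indeed where essentially all the content lives, and it is not yet filled in.
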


\begin{lem} \cite{Butnariu02, Kohsaka01}. \label{Butnariu02}
Let $E $ be a reflexive Banach space, let $f:E\to \mathbb{R}$ be a strongly coercive Bregman function and let $V$ be the function defined by \vskip 2mm
\begin{center}
$ V (x,x^{*}) = f(x) - \langle x,x^{*}\rangle + f^{*}(x^{*}),\,\,\,\,\,\forall x,\in E ,\forall x^{*} \in E^{*}. $
\end{center} \vskip 2mm
The following assertions hold.\vskip 2mm
(1)  $ D_{f} \big(x,\nabla f^{*}(x^{*})\big) = V (x,x^{*}),\,\, \forall x,\in E ,\forall x^{*} \in E^{*}.$ \vskip 2mm
(2)   $ V(x,x^{*}) + \langle\nabla f^{*} (x^{*}) - x,y^{*}\rangle \leq V (x, x^{*} + y^{*}),\,\, \forall x,\in E ,\,\,\forall x^{*} , y^{*} \in E^{*}. $\vskip 2mm
\end{lem}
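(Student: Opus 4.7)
The plan is to prove both assertions by directly unpacking the definitions of $V$, $D_f$, and the Fenchel conjugate $f^{*}$, leaning on two facts that the hypotheses make available: the identity $\nabla f^{*}=(\nabla f)^{-1}$ supplied by Lemma~\ref{Zalinescu}(4), and the Young--Fenchel equality~\eqref{conjugate}, which applies because $(\nabla f^{*}(x^{*}),x^{*})\in\partial f$ whenever $\nabla f(\nabla f^{*}(x^{*}))=x^{*}$.

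For assertion (1), I would start from the definition
$$D_{f}(x,\nabla f^{*}(x^{*})) = f(x)-f(\nabla f^{*}(x^{*}))-\langle x-\nabla f^{*}(x^{*}),\,\nabla f(\nabla f^{*}(x^{*}))\rangle.$$
Substituting $\nabla f(\nabla f^{*}(x^{*}))=x^{*}$ from Lemma~\ref{Zalinescu}(4), this collapses to
$$f(x)-f(\nabla f^{*}(x^{*}))-\langle x,x^{*}\rangle+\langle\nabla f^{*}(x^{*}),x^{*}\rangle.$$
Then applying \eqref{conjugate} to the pair $(\nabla f^{*}(x^{*}),x^{*})\in\partial f$ rewrites $\langle\nabla f^{*}(x^{*}),x^{*}\rangle-f(\nabla f^{*}(x^{*}))$ as $f^{*}(x^{*})$, and what remains is precisely $f(x)-\langle x,x^{*}\rangle+f^{*}(x^{*})=V(x,x^{*})$.

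For assertion (2), I would rearrange the desired inequality. Since $V(x,x^{*}+y^{*})-V(x,x^{*})=-\langle x,y^{*}\rangle+f^{*}(x^{*}+y^{*})-f^{*}(x^{*})$, the claim $V(x,x^{*})+\langle\nabla f^{*}(x^{*})-x,y^{*}\rangle\le V(x,x^{*}+y^{*})$ is equivalent, after the $-\langle x,y^{*}\rangle$ terms cancel, to
$$f^{*}(x^{*}+y^{*})\ge f^{*}(x^{*})+\langle\nabla f^{*}(x^{*}),y^{*}\rangle.$$
This is just the subgradient inequality for the convex function $f^{*}$ at the point $x^{*}$: since $f^{*}$ is G\^ateaux differentiable on all of $E^{*}$ (Lemma~\ref{Zalinescu}(4)), the gradient $\nabla f^{*}(x^{*})$ is the unique element of $\partial f^{*}(x^{*})$, giving $f^{*}(z^{*})\ge f^{*}(x^{*})+\langle\nabla f^{*}(x^{*}),z^{*}-x^{*}\rangle$ for every $z^{*}\in E^{*}$; the choice $z^{*}=x^{*}+y^{*}$ yields exactly the inequality needed.

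There is no serious obstacle here: the whole lemma amounts to bookkeeping with the Fenchel conjugate, and all the nontrivial analytic content (reflexivity of $E$, bijectivity and inversion formula for $\nabla f$, differentiability of $f^{*}$, well-definedness of $\partial f^{*}$ as a singleton) has already been absorbed into Lemma~\ref{Zalinescu}. The only point that warrants care is making sure, before invoking \eqref{conjugate}, that the strong coercivity plus Bregman hypotheses on $f$ indeed place $\nabla f^{*}(x^{*})$ in the domain of $\partial f$ for every $x^{*}\in E^{*}$, which is precisely the content of Lemma~\ref{Zalinescu}(1) and (4).
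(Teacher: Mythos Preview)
Your proof is correct. Note that the paper does not supply its own argument for this lemma; it is quoted from \cite{Butnariu02, Kohsaka01} as a known result, so there is no in-paper proof to compare against. Your derivation---reducing (1) to the Young--Fenchel equality via $\nabla f^{*}=(\nabla f)^{-1}$ and reducing (2) to the subgradient inequality for the convex function $f^{*}$---is the standard route and is exactly what one finds in the cited sources.
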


It also follows from the definition that $V$ is convex in the second variable $x^{*}$ and \vskip 2mm
\begin{center}
$ V \big (x,\nabla f(y) \big) = D_{f} (x,y).$
\end{center}\vskip 2mm

Let $C$ be a nonempty, closed and convex subset of a reflexive Banach space $E,$ and  be a bounded sequence in $E.$ and let $f:E\to \mathbb{R}$ be a lower semicontinuous, strictly convex and \emph{G$\hat{a}$teaux differentiable function.} For any $x \in E,$ we set $$Br(x, \{ x_{n} \})= \displaystyle \limsup_{n \rightarrow \infty} D_{f} (x_{n}, x ).$$ 

The \emph{Bregman asymptotic radius} of $\{x_n\}_{n\in \mathbb{N}}$ relative to $C$ is defined by 
$$Br(C, \{ x_{n} \})= inf \{ Br (x,\{x_{n} \} ) : x \in C \}.$$ 

The \emph{Bregman asymptotic center} of $\{x_n\}_{n\in \mathbb{N}}$ relative to $C$ is defined by 
$$BA(C, \{ x_{n} \})= \{ x \in C : Br (x,\{x_{n} \} )= Br (C, \{ x_{n} \} ) \}.$$ \vskip 2mm

\begin{prop} \label{singleton}
Let $E$ be a reflexive Banach space and $f:E\to \mathbb{R}$ be strictly convex, {G$\hat{a}$teaux differentiable function,} bounded on bounded sets on $E.$ Let $C$ be a nonempty, closed and convex subset of $E.$ If $\{x_n\}_{n\in \mathbb{N}}$ is a bounded sequence of $C$, then $BA(C,\{x_n\}_{n\in \mathbb{N}} )= \{ z \} $ is a singleton.
\end{prop}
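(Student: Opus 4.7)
Let $g(y) := Br(y,\{x_n\}) = \limsup_n D_f(x_n,y)$ and $\rho := Br(C,\{x_n\}) = \inf_{y\in C} g(y)$. Since $\{x_n\}$ is bounded and $f$ is bounded on bounded sets, every $D_f(x_n,y)$ is bounded uniformly in $n$ for fixed $y$, so $g$ takes finite values on $C$ and $0\le\rho<\infty$. The proof has two parts: producing some $z\in C$ with $g(z)=\rho$, and then showing that such a $z$ is unique.

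For \emph{existence}, choose a minimizing sequence $\{y_k\}\subset C$ with $g(y_k)\downarrow\rho$. Using the three point identity \eqref{Bregmanxyz} anchored at a fixed $y_0\in C$, combined with the boundedness of $f$ on bounded sets, one can prevent $\{y_k\}$ from escaping to infinity. Reflexivity of $E$ together with weak closedness of the convex closed set $C$ then furnishes a subsequence with $y_{k_j}\rightharpoonup z\in C$. Writing
\begin{equation*}
D_f(x_n,y_{k_j}) = D_f(x_n,z)+D_f(z,y_{k_j})+\langle x_n-z,\nabla f(z)-\nabla f(y_{k_j})\rangle
\end{equation*}
via \eqref{Bregmanxyz}, rearranging, and taking $\limsup_n$ and then $j\to\infty$ (using the continuity properties of $\nabla f$ available from convexity and Gateaux differentiability of $f$), one obtains $g(z)\le\rho$, so $z\in BA(C,\{x_n\})$.

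For \emph{uniqueness}, suppose $z_1,z_2\in BA(C,\{x_n\})$ and set $\bar z:=\tfrac{1}{2}(z_1+z_2)\in C$. Averaging the two three point identities \eqref{Bregmanxyz} with middle variable $\bar z$ and outer variables $z_1$ and $z_2$ yields
\begin{equation*}
D_f(x_n,\bar z) = \tfrac{1}{2}[D_f(x_n,z_1)+D_f(x_n,z_2)] - \tfrac{1}{2}[D_f(\bar z,z_1)+D_f(\bar z,z_2)] - \tfrac{1}{2}\langle x_n-\bar z,\,2\nabla f(\bar z)-\nabla f(z_1)-\nabla f(z_2)\rangle.
\end{equation*}
Taking $\limsup_n$ and bounding the first bracket by $\rho$, invoking the strict convexity of $f$ together with $z_1\ne z_2$ to get $D_f(\bar z,z_1)+D_f(\bar z,z_2)>0$, and controlling the linear cross term by passing to a weakly convergent subsequence of $\{x_n\}$, one concludes $g(\bar z)<\rho$, which contradicts $\rho=\inf_C g$. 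Hence $z_1=z_2$.

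The \emph{main obstacle} is precisely the linear cross term in the display above: it has no analogue in the Hilbert space parallelogram identity, and the second argument map $y\mapsto D_f(x,y)$ is in general \emph{not} convex, so a naive Jensen inequality is unavailable. The cleanest workaround is to pass to the dual via $V(x,p^*) = f(x)-\langle x,p^*\rangle+f^*(p^*)$ from Lemma \ref{Butnariu02}: Gateaux differentiability of $f$ makes $f^*$ strictly convex, hence $p^*\mapsto V(x_n,p^*)$ is strictly convex. A strict Jensen inequality at the dual midpoint $\tfrac{1}{2}(\nabla f(z_1)+\nabla f(z_2))$ then produces the strict deficit that, transported back through $D_f(\cdot,y)=V(\cdot,\nabla f(y))$, delivers $g(\bar z)<\rho$ and forces $z_1=z_2$.
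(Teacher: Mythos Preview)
Your approach is substantially different from the paper's, and unfortunately contains a real gap in the uniqueness step. The paper's own argument is a two-liner: it reduces to a weakly convergent (sub)sequence and then simply invokes the Bregman--Opial property, Lemma~\ref{Huang}, which immediately says that the weak limit $z$ strictly minimizes $y\mapsto \limsup_n D_f(x_n,y)$ over all $y\ne z$. No three-point identities, no dual variables, no minimizing sequences are needed once Lemma~\ref{Huang} is in hand.

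The concrete failure in your write-up is the ``dual midpoint'' repair at the end. You define $\bar z:=\tfrac12(z_1+z_2)$ as the \emph{primal} midpoint and then invoke strict convexity of $p^*\mapsto V(x_n,p^*)$ at the \emph{dual} midpoint $q^*:=\tfrac12(\nabla f(z_1)+\nabla f(z_2))$. These are two different competitors: the strict Jensen inequality in $V$ yields
\[
V(x_n,q^*)<\tfrac12\bigl[D_f(x_n,z_1)+D_f(x_n,z_2)\bigr],
\]
which bounds $\limsup_n V(x_n,q^*)$ below $\rho$, not $g(\bar z)$. To turn this into a contradiction with $\rho=\inf_C g$ you would need a point $\tilde z\in C$ with $\nabla f(\tilde z)=q^*$, i.e.\ $\tilde z=\nabla f^*(q^*)$. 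Under the stated hypotheses (strictly convex, G\^ateaux differentiable, bounded on bounded sets) there is no reason for $\nabla f$ to be onto $E^*$, and even if $\tilde z$ exists it need not lie in the constraint set $C$. Your displayed three-point average for $D_f(x_n,\bar z)$ is correct but, as you yourself observe, the lingering term $\langle x_n-\bar z,\,2\nabla f(\bar z)-\nabla f(z_1)-\nabla f(z_2)\rangle$ has no sign control; passing to a weakly convergent subsequence of $\{x_n\}$ replaces it by a fixed number, not necessarily zero, so the contradiction does not follow.

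A smaller but related issue sits in your existence paragraph: you only have $y_{k_j}\rightharpoonup z$, while the cross term $\langle x_n-z,\nabla f(z)-\nabla f(y_{k_j})\rangle$ needs $\nabla f(y_{k_j})\to\nabla f(z)$ in norm (or at least weak$^*$ plus a compactness argument) to vanish; mere G\^ateaux differentiability does not provide that continuity along weakly convergent sequences. Again, Lemma~\ref{Huang} sidesteps all of this: it already encodes exactly the weak lower semicontinuity and strict-minimum statement you are trying to manufacture by hand.
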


\begin{proof}  
In view of the definition of Bregman asymptotic radius, we may assume that $\{x_n\}_{n\in \mathbb{N}}$ converges weakly to $z \in C.$ By Lemma \ref{Huang}, we conclude that $BA(C,\{x_n\}_{n\in \mathbb{N}} )= \{ z \}.$
\end{proof}

Let $S$ be a nonempty set and let $B(S)$ be the Banach space of all bounded real valued functions on $S$ with supremum norm. Let $E$ be a subspace of $B(S)$ and let $\mu$ be and element of $E^*.$ Then, we denote by $\mu(f)$ the value of $\mu$ at $f \in E.$ If $e(s)=1$ for every $s \in S,$ sometimes $\mu(e)$ will be denoted by $\mu(1).$ When $E$ contains constants, a linear functional $\mu$ on $E$ is called a \textit{mean} on $E$ if $\Vert \mu \Vert = \mu (1) = 1.$

\begin{thm} \cite{Takahashi01}
Let $E$ be a subspace of $B(S)$ containing constants and let $\mu$ be  a linear functional on $E.$ Then the following conditions are equivalent:\vskip 2mm 
(1) $\Vert \mu \Vert = \mu (1) = 1,$ i.e., $\mu$ is a \textit{mean} on $E;$\vskip 2mm 
(2) the inequalities $$ \displaystyle \inf_{s \in S} f(s) \leq \mu(f) \leq \displaystyle \sup_{s \in S} f(s) $$ 
hold for each $f \in E.$
\end{thm}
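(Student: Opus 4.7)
The plan is to derive both implications from the hypothesis that $E$ contains the constants together with the pointwise envelope $\inf_{s} f(s) \leq f(s) \leq \sup_{s} f(s)$ valid in $B(S)$.

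For $(1) \Rightarrow (2)$, I would fix $f \in E$, set $a = \inf_{s} f(s)$, $b = \sup_{s} f(s)$, and use a centering trick. The constant $c = \tfrac{a+b}{2}$ lies in $E$ by hypothesis, so $g = f - c$ is again in $E$ and satisfies $\|g\|_{\infty} \leq \tfrac{b-a}{2}$. Applying $\|\mu\| = 1$ to $g$ gives $|\mu(g)| \leq \tfrac{b-a}{2}$, and using linearity together with $\mu(c) = c\,\mu(1) = c$ converts this into $a \leq \mu(f) \leq b$.

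For $(2) \Rightarrow (1)$, I would first plug the constant function $1$ into (2) to obtain $1 \leq \mu(1) \leq 1$, hence $\mu(1) = 1$. Then for any $f \in E$ with $\|f\|_{\infty} \leq 1$, the pointwise inequality $-1 \leq f(s) \leq 1$ combined with (2) gives $-1 \leq \mu(f) \leq 1$, so $|\mu(f)| \leq 1$; this yields $\|\mu\| \leq 1$, and the reverse inequality is immediate from $|\mu(1)| = 1 = \|1\|_{\infty}$.

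I do not expect any genuine obstacle; the argument is essentially bookkeeping with linearity. The only places where the standing assumptions are actually used are the presence of the constant $c$ in $E$ (to legitimize the centering step in the first implication) and the normalization $\mu(1) = 1$ (to move $c$ out of $\mu$ cleanly), so the hypotheses in the statement are exactly what is needed.
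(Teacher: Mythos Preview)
Your argument is correct and is precisely the standard proof of this characterization of means. Note, however, that the paper does not supply its own proof of this theorem: it is quoted verbatim from Takahashi's book \cite{Takahashi01} as a known result, with no accompanying proof environment. So there is nothing in the paper to compare against; your write-up simply fills in what the paper leaves to the cited reference.
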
 
Let $ l^{\infty}$ be the Banach lattice of bounded real sequences with the supremum norm and let $\mu$ be a linear continuous functional on $ l^{\infty}$ and $x=(x_1,x_2,...) \in {l^{\infty}}.$ Then sometimes. We denote by $\mu_n (x_n)$ the value $\mu(x).$ 
\begin{thm}\cite{Takahashi01} (The existence of Banach limit)  \label{Banach limit}
There exists a linear continuous functional $\mu$ on $ l^{\infty}$ such that $\Vert \mu \Vert = \mu (1) = 1$ and $ \mu ( x_n ) = \mu( x_{n+1} )$ for each $x=(x_1,x_2,...) \in  {l^{\infty}}.$ 
\end{thm}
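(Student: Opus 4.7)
The plan is to build $\mu$ via the Hahn--Banach extension theorem, starting from a carefully chosen sublinear majorant on $l^{\infty}$ that simultaneously forces normalization on the constant sequence and shift--invariance.

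First, I would define the Cesàro upper mean $p : l^{\infty} \to \mathbb{R}$ by
\[
p(x) \;=\; \limsup_{n\to\infty} \frac{1}{n}\sum_{k=1}^{n} x_k, \qquad x=(x_1,x_2,\dots) \in l^{\infty}.
\]
The two routine verifications are that $p$ is positively homogeneous and subadditive, the latter because $\limsup(a_n+b_n) \le \limsup a_n + \limsup b_n$. In addition $p(x) \le \|x\|_{\infty}$ for every $x$, and on the constant sequence $e=(1,1,1,\dots)$ we have $p(ce)=c$.

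Next, on the one--dimensional subspace $Y := \{ce : c \in \mathbb{R}\} \subset l^{\infty}$ define $\mu_0(ce) := c$. Then $\mu_0$ is linear and $\mu_0(ce) = c = p(ce)$, so $\mu_0 \le p$ on $Y$. By the Hahn--Banach theorem I obtain a linear extension $\mu$ of $\mu_0$ to all of $l^{\infty}$ satisfying $\mu(x) \le p(x)$ for every $x \in l^{\infty}$. Setting $x \mapsto -x$ also gives $-\mu(x) \le p(-x) \le \|x\|_{\infty}$, so $|\mu(x)| \le \|x\|_{\infty}$, whence $\|\mu\| \le 1$; combined with $\mu(e)=1=\|e\|_{\infty}$ this yields $\|\mu\|=\mu(1)=1$.

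Finally I verify shift--invariance. For $x=(x_1,x_2,\dots)$ let $Sx := (x_2,x_3,\dots)$. Then
\[
p(x-Sx) \;=\; \limsup_{n\to\infty} \frac{1}{n}\sum_{k=1}^{n}(x_k-x_{k+1}) \;=\; \limsup_{n\to\infty} \frac{x_1-x_{n+1}}{n} \;=\; 0,
\]
since $\{x_n\}$ is bounded. Therefore $\mu(x)-\mu(Sx) = \mu(x-Sx) \le p(x-Sx) = 0$, and the same argument applied to $Sx - x$ gives the reverse inequality, so $\mu(x_n)=\mu(x_{n+1})$ for every $x \in l^{\infty}$. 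There is no real obstacle; the only delicate point is to choose a sublinear $p$ strong enough to kill the telescoping sum $x-Sx$ in the limit, while still being equal to $c$ on $ce$ so that the normalization $\mu(1)=1$ survives the extension.
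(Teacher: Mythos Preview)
Your argument is correct and is the standard Hahn--Banach construction of a Banach limit: the Ces\`aro upper mean $p$ is sublinear, agrees with the identity on constants, and annihilates the telescoping sequence $x-Sx$, so the extension is automatically norm--one, normalized, and shift--invariant. Note, however, that the paper does not supply its own proof of this theorem at all; it simply quotes the result from \cite{Takahashi01} and uses it later (in Corollary~\ref{Bregman alpha04}) as a black box. There is therefore nothing in the paper to compare your proof against. For what it is worth, the proof in Takahashi's book follows essentially the same Hahn--Banach route you take, so your approach is both correct and standard.
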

\begin{flushleft}
\textbf{Observation}\vskip 2mm 
\end{flushleft}

(1)  If $\{x_n\}_{n\in \mathbb{N} \in l^{\infty}}$ and $ x_n \geq 0$ for every $ n \in \mathbb{N} ,$ then $ \mu (x_n) \geq 0 ;$\vskip 2mm 
(2)  If $ x_n = 1 $ for every $ n \in \mathbb{N} $, then $ \mu (x_n) = 1.$\vskip 2mm  
Such a functional $ \mu $ is called \emph{a Banach limit} and the value of $ \mu $ at $\{x_n\}_{n\in \mathbb{N} }\in l^{\infty} $ is denoted by $ \mu_n x_n .$ See, for example \cite{Takahashi01}.\vskip 2mm 
To see some examples of those mappings $T$ satisfying all the stated hypotheses in the following result, we refer the reader to  \cite{Hussain}.

\begin{lem} \cite{Hussain}. \label{Hussain}
Let $E$ be a reflexive Banach space and let $f:E\to \mathbb{R}$ be strictly convex, continuous, strongly coercive, {G$\hat{a}$teaux differentiable function,} and bounded on bounded sets on $E.$ Let $C$ be a nonempty, closed and convex subset of $E.$ Let $T:C\to E $ be a Bregman quasi-nonexpansive mapping. Then $F(T)$ is closed and convex.	
\end{lem}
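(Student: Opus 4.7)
The plan is to split the argument into two parts, showing first that $F(T)$ is closed and then that it is convex, and in both cases to leverage the Bregman quasi-nonexpansive inequality together with strict convexity of $f$ (which ensures $D_f(x,y)=0 \Rightarrow x=y$). I would begin by recording this implication: since $f$ is strictly convex and Gâteaux differentiable, the function $x \mapsto f(x) - \langle x,\nabla f(y)\rangle$ is strictly convex with minimum at $x=y$, so $D_f(x,y)=0$ forces $x=y$.

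For closedness, I would take a sequence $\{p_n\} \subset F(T)$ with $p_n \to p \in C$ and show $p \in F(T)$. Applying the definition of Bregman quasi-nonexpansivity with the fixed point $p_n$ and the point $p$ gives
\[
D_f(p_n,Tp) \leq D_f(p_n,p).
\]
Since $f$ is continuous and $\nabla f(p)$ is fixed, expanding $D_f(p_n,p) = f(p_n)-f(p)-\langle p_n-p,\nabla f(p)\rangle$ shows $D_f(p_n,p) \to 0$, and expanding $D_f(p_n,Tp) = f(p_n)-f(Tp)-\langle p_n-Tp,\nabla f(Tp)\rangle$ shows $D_f(p_n,Tp) \to D_f(p,Tp)$. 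Combining the two, $D_f(p,Tp)\leq 0$, so $D_f(p,Tp)=0$, and strict convexity yields $Tp=p$.

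For convexity, fix $p,q\in F(T)$ and $t\in(0,1)$, and set $z = tp+(1-t)q\in C$. The quasi-nonexpansive inequality gives $D_f(p,Tz)\leq D_f(p,z)$ and $D_f(q,Tz)\leq D_f(q,z)$. The main tool is the three-point identity \eqref{Bregmanxyz}, which I apply to the triples $(p,z,Tz)$ and $(q,z,Tz)$ to obtain
\[
D_f(p,Tz) = D_f(p,z) + D_f(z,Tz) + \langle p-z, \nabla f(z)-\nabla f(Tz)\rangle,
\]
and analogously for $q$. Substituting into the two quasi-nonexpansive inequalities yields
\[
D_f(z,Tz) + \langle p-z, \nabla f(z)-\nabla f(Tz)\rangle \leq 0, \qquad D_f(z,Tz) + \langle q-z, \nabla f(z)-\nabla f(Tz)\rangle \leq 0.
\]
Taking the convex combination with weights $t$ and $1-t$, the inner-product contribution becomes $\langle tp+(1-t)q-z, \nabla f(z)-\nabla f(Tz)\rangle = 0$ because $tp+(1-t)q=z$. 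Hence $D_f(z,Tz)\leq 0$, forcing $D_f(z,Tz)=0$, and strict convexity again gives $Tz=z$, i.e.\ $z\in F(T)$.

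The genuine work is the convexity step, where the cancellation in the convex combination is what makes the argument go through; closedness is essentially bookkeeping via continuity of $f$. I do not expect any technical obstacle: the hypotheses of strong coercivity and boundedness on bounded sets are not needed for this particular conclusion, and the whole proof rests on the three-point identity \eqref{Bregmanxyz}, the Bregman quasi-nonexpansive definition, and the elementary fact that $D_f(x,y)=0$ implies $x=y$ under strict convexity.
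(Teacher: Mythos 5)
Your proof is correct. Note that the paper itself offers no proof of this lemma --- it is imported verbatim from \cite{Hussain} --- so there is no internal argument to compare against; your route is the standard one from that literature: closedness via continuity of $f$ and the limit $D_f(p_n,p)\to 0$, and convexity via the convex-combination cancellation $\langle tp+(1-t)q-z,\nabla f(z)-\nabla f(Tz)\rangle=0$, which in the cited source is carried out by directly expanding $tD_f(p,Tz)+(1-t)D_f(q,Tz)$ rather than through the three-point identity \eqref{Bregmanxyz}, an entirely equivalent bookkeeping choice. Your observations that strict convexity makes $D_f(x,y)=0$ force $x=y$ (the paper's caveat about this failing concerns only non-strictly-convex $f$) and that reflexivity, strong coercivity, and boundedness on bounded sets are not actually used are both accurate.
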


\begin{lem}  \cite{Mainge} \label{Mainge}
Let $\{a_n\}_{n\in \mathbb{N} }$ be a sequence in $ \mathbb{R} $  with a subsequence $\{a_{n_i}\}_{i \in \mathbb{N} }$ such that $ a_{n_i} < a_{n_i+1} $ for all $ i \in \mathbb{N}.$ Then there exists another subsequence $\{a_{m_k}\}_{k\in \mathbb{N} }$ such that for all (sufficiently large) number $k$ we have
\begin{center}
$ a_{m_k} < a_{m_k+1} $ and $ a_{k} < a_{m_k+1} $.
\end{center}
In fact, we can set $m_k = max \{j \leq k : a_j < a_{j+1} \}.$
\end{lem}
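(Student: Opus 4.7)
The plan is to take the explicit definition $m_k = \max\{j \leq k : a_j < a_{j+1}\}$ suggested in the statement and verify the three required properties in order: that $m_k$ is well-defined for large $k$, that $m_k \to \infty$, and then the two inequalities.

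First I would argue well-definedness. The hypothesis gives infinitely many indices $n_i$ with $a_{n_i} < a_{n_i+1}$, so the set $S_k := \{j \leq k : a_j < a_{j+1}\}$ contains $n_i$ whenever $n_i \leq k$. Hence $S_k$ is nonempty for every $k \geq n_1$, and $m_k = \max S_k$ exists. The first inequality $a_{m_k} < a_{m_k+1}$ is then immediate since $m_k \in S_k$. For $m_k \to \infty$, I would fix $N \in \mathbb N$, pick $n_i \geq N$ (possible since $\{n_i\}$ is a strictly increasing subsequence of indices, so unbounded), and observe that for every $k \geq n_i$ the index $n_i$ lies in $S_k$, whence $m_k \geq n_i \geq N$.

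For the second inequality $a_k \leq a_{m_k+1}$ (which is the content of Maingé's lemma and is what the statement means), I would split on whether $m_k = k$ or $m_k < k$. If $m_k = k$, then $a_k = a_{m_k} < a_{m_k+1}$ and we are done. If $m_k < k$, then for every integer $j$ with $m_k < j \leq k$, the maximality of $m_k$ forces $j \notin S_k$, i.e.\ $a_j \geq a_{j+1}$. Telescoping these inequalities from $j = m_k+1$ up to $j = k$ yields
\[
a_{m_k+1} \geq a_{m_k+2} \geq \cdots \geq a_{k} \geq a_{k+1},
\]
so in particular $a_k \leq a_{m_k+1}$, which combined with the first inequality gives what is claimed.

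The only subtle point is the strict-versus-non-strict character of the second inequality in the edge case $m_k = k-1$: the chain above then degenerates to $a_{m_k+1} = a_k$, so strict inequality need not hold. The standard reading of the lemma (as used in the applications later in the paper) is $a_k \leq a_{m_k+1}$, and the proof above delivers exactly that. No further machinery is needed; the main obstacle, if any, is simply making sure the maximality argument is applied on the correct range of indices so that the telescoping works.
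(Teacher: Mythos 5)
The paper offers no proof of this lemma at all: it is quoted directly from Maing\'{e} \cite{Mainge}, so there is no internal argument to compare against, and what you have done is reconstruct the original proof, correctly. Your four steps are all sound: $S_k=\{j\leq k: a_j<a_{j+1}\}$ is nonempty once $k\geq n_1$ because it contains every $n_i\leq k$; $m_k\to\infty$ since $n_i\leq m_k$ whenever $k\geq n_i$ and the indices $n_i$ are unbounded; $a_{m_k}<a_{m_k+1}$ is immediate from $m_k\in S_k$; and the telescoping $a_{m_k+1}\geq a_{m_k+2}\geq\cdots\geq a_k\geq a_{k+1}$, forced by maximality of $m_k$ on the range $m_k<j\leq k$, gives the second inequality. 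Your closing remark is moreover a genuine correction to the statement as printed: in the edge case $m_k=k-1$ one has $a_{m_k+1}=a_k$ exactly, so the strict inequality $a_k<a_{m_k+1}$ claimed in the lemma can fail, and the correct conclusion is $a_k\leq a_{m_k+1}$, which is what Maing\'{e}'s original lemma asserts and which is all that is needed where the paper invokes it (Case 2 of the proof of Theorem~\ref{Bregman Noor04}, where the displayed chain is indeed used in the non-strict form $D_f(w,x_k)\leq D_f(w,x_{m_k+1})$). The only cosmetic caveat, not worth a deduction since the paper states the lemma the same way, is that $k\mapsto m_k$ is nondecreasing rather than strictly increasing, so $\{a_{m_k}\}$ is a ``subsequence'' only in the loose sense customary for this lemma.
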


\begin{lem} \cite{Xu} \label{Xu}
Let $\{s_n\}_{n\in \mathbb{N} }$  be a sequence of nonnegative real numbers satisfying
\begin{center}
$ s_{n+1} \leq (1- \gamma_{n} ) s_{n} + \gamma_{n} \delta_{n} ,\,\,\,\, \forall n \geq 1,$
\end{center}
where $\{\gamma_n\}_{n\in \mathbb{N} }$ and $\{\delta_n\}_{n\in \mathbb{N} }$ satisfy the conditions:\vskip 2mm
(1) $\{\gamma_n\}_{n\in \mathbb{N} } \subset [0,1]$ and 
$ \displaystyle \Sigma_{n=1}^\infty \gamma_n = +\infty, $ or equivalently, $ \displaystyle \Pi_{n=1}^\infty (1-\gamma_n) = 0; $\vskip 2mm
(2) $ \displaystyle \limsup_{n\to \infty} \delta_n < 0, $ or \vskip 2mm
(2)' $ \displaystyle \Sigma_{n=1}^\infty \gamma_n \delta_n < \infty. $ \vskip 2mm
Then, $ \displaystyle \lim_{n\to \infty} s_n = 0. $
\end{lem}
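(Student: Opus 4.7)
The plan is to treat the two alternative hypotheses separately. In each case I reduce the recursion to an iterated form and then control the resulting expression using the equivalence $\sum\gamma_n=\infty\Leftrightarrow\prod_{n}(1-\gamma_n)=0$ already recalled in hypothesis (1).

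For case (2), read in the customary sense $\limsup_{n\to\infty}\delta_n\leq 0$, I would fix $\epsilon>0$ and pick $N$ so that $\delta_n\leq\epsilon$ for $n\geq N$. Substituting into the recursion and subtracting $\epsilon$ gives
\[
s_{n+1}-\epsilon\leq(1-\gamma_n)(s_n-\epsilon),\qquad n\geq N.
\]
Since $1-\gamma_n\geq 0$, I pass to positive parts to get $(s_{n+1}-\epsilon)^+\leq(1-\gamma_n)(s_n-\epsilon)^+$ and iterate, which yields
\[
(s_{n+1}-\epsilon)^+\leq(s_N-\epsilon)^+\prod_{k=N}^n(1-\gamma_k)\longrightarrow 0.
\]
Hence $\limsup s_n\leq\epsilon$; letting $\epsilon\downarrow 0$ and using $s_n\geq 0$ closes the case.

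For case (2)$'$, I would iterate the recursion directly to obtain the explicit bound
\[
s_{n+1}\leq s_1\prod_{k=1}^n(1-\gamma_k)+\sum_{k=1}^n\gamma_k\delta_k\prod_{j=k+1}^n(1-\gamma_j).
\]
The first summand vanishes as in case (2). For the second, I would fix $\epsilon>0$, choose $N$ so that $\sum_{k\geq N}|\gamma_k\delta_k|<\epsilon$, and split the sum at $N$: the finite initial segment $k<N$ vanishes because each product $\prod_{j=k+1}^n(1-\gamma_j)\to 0$ for fixed $k$ as $n\to\infty$, while the tail is bounded in absolute value by $\epsilon$ since each factor $\prod_{j=k+1}^n(1-\gamma_j)$ lies in $[0,1]$.

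The main obstacle is the precise reading of condition (2)$'$. The argument just sketched requires absolute summability $\sum\gamma_n|\delta_n|<\infty$; if one insists that (2)$'$ denotes only conditional convergence of $\sum\gamma_n\delta_n$, the crude tail bound fails and one must instead apply summation by parts, exploiting that the weight sequence $k\mapsto\prod_{j=k+1}^n(1-\gamma_j)$ is monotone and bounded in $[0,1]$ to convert the Cauchy property of the partial sums of $\sum\gamma_k\delta_k$ into smallness of the weighted tail. Apart from this interpretive subtlety, the argument is mechanical.
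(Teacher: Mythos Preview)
The paper does not supply a proof of this lemma; it is quoted from the reference \cite{Xu} and used as a black box in the proof of Theorem~\ref{Bregman Noor04}. There is therefore no in-paper argument to compare against, and your proposal stands on its own as a self-contained proof of a cited auxiliary result.

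For what it is worth, your argument is correct. The treatment of case~(2) via the shift $s_n\mapsto s_n-\epsilon$ and passage to positive parts is clean and standard. In case~(2)$'$ you rightly flag the distinction between absolute and conditional summability of $\sum\gamma_n\delta_n$: the crude tail bound needs absolute convergence, while the Abel summation-by-parts argument you outline (using that the weights $\prod_{j=k+1}^n(1-\gamma_j)$ form a monotone sequence in $[0,1]$ for each fixed $n$) handles the conditional case and is the robust way to close the gap. In most formulations of Xu's lemma, including the one cited here, condition~(2)$'$ is in practice applied with nonnegative or absolutely summable error terms, so the simpler argument already suffices for the applications in this paper.
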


\section{Approximating Fixed Points}
In this section, we obtain a fixed point theorem for a generalized $ \alpha $-nonexpansive mapping with respect to the Bregman Opial-like property. 

\begin{lem} \label{Bregman alpha01}
Let $f:E\to \mathbb{R}$ be a strictly convex and {G$\hat{a}$teaux differentiable function.} Let $C$ be a nonempty, closed and convex subset of a reflexive Banach space $E.$ Let $T:C\to E $ be a Bregman generalized $ \alpha $-nonexpansive mapping. Then \vskip 2mm

$\aligned
D_{f}(x, Ty)&\leq D_{f}(x,Tx)+ (1 - \alpha ) D_{f}(x,y) + \alpha D_{f}(Tx, Ty) \\
&\quad+ \alpha \langle x-Tx,\nabla f(y) - \nabla f(Ty) \rangle + \langle x - Tx ,\nabla f(Tx) - \nabla f(Ty) \rangle, \,\,\,\,\,\,\,\, \forall x, y \in C.\\
\endaligned $ \vskip 2mm
\end{lem}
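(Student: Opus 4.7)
The plan is to reduce the claimed inequality to the Bregman generalized $\alpha$-nonexpansive condition \eqref{Bregman alpha} by using the three-point identity \eqref{Bregmanxyz} twice. The key observation is that if I apply \eqref{Bregmanxyz} with middle point $Tx$ to expand
$$D_f(x,Ty) = D_f(x,Tx) + D_f(Tx,Ty) + \langle x - Tx,\nabla f(Tx) - \nabla f(Ty)\rangle,$$
then the terms $D_f(x,Tx)$ and $\langle x-Tx,\nabla f(Tx)-\nabla f(Ty)\rangle$ on the right-hand side of the claim are exactly absorbed. The inequality therefore reduces to showing
$$(1-\alpha)D_f(Tx,Ty) \leq (1-\alpha)D_f(x,y) + \alpha\langle x - Tx,\nabla f(y) - \nabla f(Ty)\rangle,$$
which I can divide through by $1-\alpha>0$ since $\alpha\in[0,1)$.

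To establish the reduced inequality, I would start from the defining inequality \eqref{Bregman alpha},
$$D_f(Tx,Ty) \leq \alpha D_f(Tx,y) + \alpha D_f(x,Ty) + (1-2\alpha)D_f(x,y),$$
and expand both $D_f(Tx,y)$ and $D_f(x,Ty)$ via \eqref{Bregmanxyz}. For $D_f(Tx,y)$ I would use the intermediate point $x$, obtaining
$$D_f(Tx,y) = D_f(Tx,x) + D_f(x,y) + \langle Tx - x,\nabla f(x) - \nabla f(y)\rangle,$$
while for $D_f(x,Ty)$ I would use the intermediate point $Tx$ as above. Substituting both expansions and moving the resulting $\alpha D_f(Tx,Ty)$ term to the left produces a $(1-\alpha)D_f(Tx,Ty)$ on the left; the pure Bregman-distance terms on the right collapse to $(1-\alpha)D_f(x,y) + \alpha[D_f(Tx,x) + D_f(x,Tx)]$, and the combination $D_f(Tx,x) + D_f(x,Tx)$ simplifies by the symmetry identity \eqref{Bregman commu} to $\langle x - Tx,\nabla f(x) - \nabla f(Tx)\rangle$.

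The main (if routine) obstacle is keeping the three inner-product terms with prefactor $\alpha$ lined up so that they telescope into a single clean expression. Writing $u:=x-Tx$, those three terms are
$$\alpha\langle u,\nabla f(x)-\nabla f(Tx)\rangle + \alpha\langle -u,\nabla f(x)-\nabla f(y)\rangle + \alpha\langle u,\nabla f(Tx)-\nabla f(Ty)\rangle,$$
and pulling $u$ out shows that the $\nabla f(x)$ and $\nabla f(Tx)$ contributions cancel, leaving exactly $\alpha\langle x - Tx,\nabla f(y)-\nabla f(Ty)\rangle$. Putting this together yields the reduced inequality, and tracing back through Step~1 delivers the statement.
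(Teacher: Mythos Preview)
Your proposal is correct and follows essentially the same route as the paper's proof: both start from the defining inequality \eqref{Bregman alpha}, expand $D_f(Tx,y)$ via \eqref{Bregmanxyz} with midpoint $x$ and $D_f(x,Ty)$ via \eqref{Bregmanxyz} with midpoint $Tx$, use \eqref{Bregman commu} to collapse $D_f(Tx,x)+D_f(x,Tx)$, telescope the three inner-product terms, and finally apply \eqref{Bregmanxyz} once more to pass from a bound on $D_f(Tx,Ty)$ to one on $D_f(x,Ty)$. The only cosmetic difference is that you first reduce the claim and then verify the reduced form, whereas the paper bounds $D_f(Tx,Ty)$ first and substitutes at the end; your aside about dividing by $1-\alpha$ is unnecessary (you never use it), but harmless.
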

	
\begin{proof}
Let $x, y \in C.$ In view of (\ref{Bregmanxyz}), we have \\\vskip 2mm
$\aligned
D_{f}(Tx, Ty)&\leq \alpha D_{f}(Tx, y) + \alpha D_{f} (x,Ty) + (1 - 2 \alpha) D_{f}(x,y)\\
&=\alpha \big [ D_{f}(Tx, x)+ D_{f}(x,y) + \langle Tx - x ,\nabla f(x) - \nabla f(y) \rangle \big ] \\
&\quad+ \alpha \big [ D_{f}(x, Tx)+ D_{f}(Tx,Ty)+  \langle x - Tx,\nabla f(Tx) - \nabla f(Ty)\rangle ]\\
&\quad+ (1 - 2 \alpha) D_{f}(x,y) \\
&=\alpha D_{f}(Tx, x)+  \alpha D_{f}(x,y) + \alpha \langle Tx - x ,\nabla f(x) - \nabla f(y) \rangle \\
&\quad+ \alpha D_{f}(x, Tx)+ \alpha D_{f}(Tx,Ty)+ \alpha \langle x - Tx,\nabla f(Tx) - \nabla f(Ty)\rangle \\
&\quad+ (1 - 2 \alpha) D_{f}(x,y) \\
&= D_{f}(Tx,x) +(1- \alpha) D_{f}(x,y) + \alpha D_{f}(x, Tx) + \alpha D_{f} (Tx,Ty)\\
&\quad+ \alpha \langle Tx - x ,\nabla f(x) - \nabla f(y) \rangle + \alpha \langle x - Tx,\nabla f(Tx) - \nabla f(Ty)\rangle \\
&= - \alpha D_{f}(x,Tx) + \alpha \langle x-Tx ,\nabla f(x) - \nabla f(Tx) \rangle \\
&\quad+(1- \alpha) D_{f}(x,y) + \alpha D_{f}(x, Tx) + \alpha D_{f} (Tx,Ty)\\
&\quad+ \alpha \langle Tx - x ,\nabla f(x) - \nabla f(y) \rangle + \alpha \langle x - Tx,\nabla f(Tx) - \nabla f(Ty)\rangle \\
&= (1- \alpha) D_{f}(x,y) + \alpha D_{f} (Tx,Ty)\\
&\quad+ \alpha \langle x-Tx ,\nabla f(y) - \nabla f(Ty) \rangle + \alpha \langle x - Tx,\nabla f(Tx) - \nabla f(Ty)\rangle \\
&= (1- \alpha) D_{f}(x,y) + \alpha D_{f} (Tx,Ty)+ \alpha \langle x - Tx,\nabla f(y) - \nabla f(Ty)\rangle \\
\endaligned $ \vskip 2mm
This, together with (\ref{Bregmanxyz}), implies that \vskip 2mm
$\aligned
D_{f}(x, Ty)&= D_{f}(x,Tx) + D_{f} (Tx,Ty) + \langle x - Tx ,\nabla f(Tx) - \nabla f(Ty) \rangle\\
&\leq D_{f}(x,Tx)+ (1 - \alpha ) D_{f}(x,y) + \alpha D_{f}(Tx, Ty) \\
&\quad+ \alpha \langle x-Tx,\nabla f(y) - \nabla f(Ty) \rangle + \langle x - Tx ,\nabla f(Tx) - \nabla f(Ty) \rangle. 
\endaligned$ \\
\end{proof}	

\begin{prop} \label{Bregman alpha02}
(Demiclosedness Principle). Let $f:E\to \mathbb{R}$ be a strictly convex, {G$\hat{a}$teaux differentiable function} and bounded on bounded sets function. Let $C$ be a nonempty subset of a reflexive Banach space $E.$ Let $T:C\to E $ be a Bregman generalized $ \alpha $- nonexpansive mapping. If $ x_{n} \rightharpoonup z $ in $C$ and $ \displaystyle \lim_{n\to \infty} \|Tx_n-x_n\parallel=0$, 
\end{prop}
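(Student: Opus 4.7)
The plan is to derive $Tz = z$ by a contradiction argument driven by the Bregman-Opial inequality of Lemma \ref{Huang}. The main input is the estimate of Lemma \ref{Bregman alpha01} specialized to $x = x_n$, $y = z$, namely
\begin{align*}
D_f(x_n, Tz) &\leq D_f(x_n, Tx_n) + (1-\alpha) D_f(x_n, z) + \alpha D_f(Tx_n, Tz) \\
&\quad + \alpha \langle x_n - Tx_n, \nabla f(z) - \nabla f(Tz)\rangle \\
&\quad + \langle x_n - Tx_n, \nabla f(Tx_n) - \nabla f(Tz) \rangle.
\end{align*}

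First I would show that the three ``error'' contributions $D_f(x_n,Tx_n)$, $\langle x_n - Tx_n,\nabla f(z) - \nabla f(Tz)\rangle$, and $\langle x_n - Tx_n,\nabla f(Tx_n) - \nabla f(Tz)\rangle$ all tend to zero. Weak convergence of $\{x_n\}$ forces boundedness of $\{x_n\}$, and the hypothesis $\|Tx_n - x_n\| \to 0$ then yields boundedness of $\{Tx_n\}$; since $f$ is bounded on bounded sets on the reflexive Banach space $E$, the gradients $\{\nabla f(x_n)\}$ and $\{\nabla f(Tx_n)\}$ are bounded in $E^*$ (convex functions bounded on bounded sets are locally Lipschitz). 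Using the definition $D_f(x_n,Tx_n) = f(x_n) - f(Tx_n) - \langle x_n - Tx_n, \nabla f(Tx_n)\rangle$ together with continuity of $f$, we conclude $D_f(x_n,Tx_n) \to 0$, and both inner products are $O(\|x_n - Tx_n\|) = o(1)$.

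Next I would use the three-point identity \eqref{Bregmanxyz} to rewrite
$$D_f(Tx_n, Tz) = D_f(Tx_n, x_n) + D_f(x_n, Tz) + \langle Tx_n - x_n, \nabla f(x_n) - \nabla f(Tz)\rangle,$$
in which again the first summand and the inner product tend to zero by the same reasoning. Substituting back into the inequality above and collecting the $\alpha D_f(x_n,Tz)$ term on the left-hand side gives
$$(1-\alpha)\, D_f(x_n, Tz) \leq (1-\alpha)\, D_f(x_n, z) + o(1).$$
Since $\alpha \in [0,1)$, dividing by $1 - \alpha > 0$ and taking $\limsup$ yields
$$\limsup_{n \to \infty} D_f(x_n, Tz) \leq \limsup_{n \to \infty} D_f(x_n, z).$$
If $Tz \neq z$, Lemma \ref{Huang} (applied with $y = Tz$, valid provided $Tz$ lies in the interior of $\operatorname{dom} f = E$) delivers the strict reverse inequality, a contradiction. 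Hence $Tz = z$, i.e., $z \in F(T)$.

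The technical heart of the argument is that every gradient-difference inner product really is $o(1)$; this matters because the hypothesis here only supplies strict convexity, G\^ateaux differentiability, and boundedness of $f$ on bounded sets, so the uniform-convexity machinery of Lemma \ref{Naraghirad02} is not directly available to conclude $D_f(x_n,Tx_n) \to 0$ from $\|x_n - Tx_n\| \to 0$. The boundedness of $\nabla f$ on bounded subsets of $E$, obtained from local Lipschitzness of convex functions bounded on bounded sets, is therefore the crucial ingredient, and I expect the bulk of the write-up to consist of justifying these $o(1)$ claims carefully.
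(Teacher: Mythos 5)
Your proposal is correct, and its skeleton is the same as the paper's: specialize Lemma \ref{Bregman alpha01} at $x = x_n$, $y = z$, expand $D_f(Tx_n,Tz)$ by the three-point identity \eqref{Bregmanxyz}, kill the error terms using $\|x_n - Tx_n\| \to 0$ and the bound $M_1$ on the four gradients, pass to $\limsup$, and invoke the Bregman--Opial inequality of Lemma \ref{Huang} to force $Tz = z$. But you deviate from the paper at exactly the two points where the paper's write-up is shaky, and in both cases your version is the sound one. First, the paper obtains $D_f(x_n,Tx_n) \to 0$ by asserting uniform norm-to-norm continuity of $\nabla f$ on bounded sets and then applying Lemma \ref{Naraghirad02} --- but both of these require uniform convexity/smoothness of $f$ on bounded subsets, which is \emph{not} among the hypotheses of the proposition (only strict convexity, G\^ateaux differentiability, and boundedness on bounded sets are assumed). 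Your direct route avoids this entirely; in fact it can be made even simpler than your sketch: the convexity gradient inequality gives
\begin{equation*}
0 \leq D_f(x_n,Tx_n) \leq \langle x_n - Tx_n,\, \nabla f(x_n) - \nabla f(Tx_n)\rangle \leq 2M_1 \|x_n - Tx_n\| \to 0,
\end{equation*}
so only boundedness of the gradients (which follows from $f$ being bounded on bounded sets, cf.\ the paper's own citation of Butnariu--Iusem) is needed --- no appeal to continuity of $f$ or to Lemma \ref{Naraghirad02} at all. Second, your absorption of the $\alpha D_f(x_n,Tz)$ term into the left-hand side, giving $(1-\alpha)D_f(x_n,Tz) \leq (1-\alpha)D_f(x_n,z) + o(1)$ and then dividing by $1-\alpha > 0$, is the correct rearrangement; the paper instead keeps $\alpha D_f(x_n,Tz)$ on the right and, in its final displayed inequality, silently replaces it by $\alpha D_f(x_n,z)$ --- which presupposes the very estimate being proved. (The paper's penultimate line does yield the conclusion after your rearrangement, using finiteness of the limsups, so the damage is repairable, but as printed the step is circular.) In short: same decomposition and same key Opial lemma, but your handling of the $o(1)$ terms and the final algebra makes the proof actually match the stated hypotheses, which the paper's does not.
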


\begin{proof}
Since $\{x_n\}_{n\in \mathbb{N}}$ converges weakly to $z$ and $ \displaystyle \lim_{n\to \infty} \|Tx_n-x_n\parallel=0,$ both the sequences
$\{x_n\}_{n\in \mathbb{N}}$ and $\{Tx_n\}_{n\in \mathbb{N}}$ are bounded. Since $\nabla f$ is uniformly norm-to-norm continuous on
bounded subsets of $E$ (see, for instance,  \cite{Zalinescu}), we arrive at 
\begin{center}
$ \displaystyle \lim_{n\to \infty} \| \nabla f(x_n)-\nabla f(Tx_n) \parallel=0.$
\end{center}
In view of Lemma \ref{Naraghirad02}, we deduce that $ \displaystyle \lim_{n\to \infty} D_f(x_n,Tx_n)=0$.\\ 
Set \,\,\,\,\,\,\,\,\, $ M_{1}= sup \{ \| \nabla f (x_n) \|, \| \nabla f (Tx_n) \|,\| \nabla f(z) \|, \| \nabla f(Tz) \| : n \in \mathbb{N} \} < + \infty $.\\
By Lemma \ref{Bregman alpha01}, for all $n \in \mathbb{N}$,\vskip 2mm
$\aligned
D_{f}(x_n, Tz)&\leq D_{f}(x_n,Tx_n)+ (1 - \alpha ) D_{f}(x_n,z) + \alpha D_{f}(Tx_n, Tz) \\
&\quad+ \alpha \langle x_n -Tx_n,\nabla f(z) - \nabla f(Tz) \rangle + \langle x_n - Tx_n ,\nabla f(Tx_n) - \nabla f(Tz) \rangle  \\
&=  D_{f}(x_n,Tx_n)  + (1- \alpha )D_{f}(x_n,z)\\
&\quad+ \alpha[ D_{f}(Tx_n,x_n) + D_f(x_n,Tz)+ \langle Tx_n -x_n ,\nabla f (x_n) - \nabla f(Tz)\rangle]\\
&\quad+ \alpha \langle x_n -Tx_n,\nabla f(z) - \nabla f(Tz) \rangle + \langle x_n - Tx_n ,\nabla f(Tx_n) - \nabla f(Tz) \rangle  \\
&=  D_{f}(x_n,Tx_n)  + (1- \alpha )D_{f}(x_n,z)\\
&\quad+ \alpha D_ {f}(Tx_n,x_n) + \alpha D_f (x_n,Tz)+ \alpha \langle Tx_n -x_n ,\nabla f (x_n) - \nabla f (Tz)\rangle\\
&\quad+ \alpha \langle x_n -Tx_n,\nabla f(z) - \nabla f(Tz) \rangle + \langle x_n - Tx_n ,\nabla f(Tx_n) - \nabla f(Tz) \rangle  \\
&=  D_{f}(x_n,Tx_n)  + (1- \alpha )D_{f}(x_n,z)\\
&\quad -\alpha D_ {f}(x_n,Tx_n) + \alpha \langle x_n - Tx_n, \nabla f (x_n)- \nabla f (Tx_n)\rangle \\
&\quad+ \alpha D_f (x_n,Tz)+ \alpha \langle x_n - Tx_n ,\nabla f (Tz) - \nabla f (x_n)\rangle\\
&\quad+ \alpha \langle x_n -Tx_n,\nabla f(z) - \nabla f(Tz) \rangle + \langle x_n - Tx_n ,\nabla f(Tx_n) - \nabla f(Tz) \rangle  \\
&= (1- \alpha )D_f (x_n,Tx_n)+(1- \alpha )D_f (x_n,z)+ \alpha D_f (x_n,T_z)\\
&\quad+ \alpha \langle x_n -Tx_n,\nabla f(z) - \nabla f(Tx_n) \rangle + \langle x_n - Tx_n ,\nabla f(Tx_n) - \nabla f(Tz) \rangle  \\
&\leq (1- \alpha )D_f (x_n,Tx_n)+(1- \alpha )D_f (x_n,z)+ \alpha D_f (x_n,T_z)\\
&\quad+ \alpha\| x_n - Tx_n \| \|\nabla f (z)-\nabla f (Tx_n) \| \\
&\quad+ \| x_{n} - Tx_n \| \|\nabla f(Tx_n) - \nabla f(Tz) \|  \\
&\leq (1- \alpha )D_f (x_n,Tx_n)+(1- \alpha )D_f (x_n,z) + \alpha D_f (x_n,T_z)\\
&\quad+ 2 \alpha M_1 \| x_n - Tx_n \| + 2M_1 \| x_{n} - Tx_n \| \\
&\leq (1- \alpha )D_f (x_n,Tx_n)+ D_f (x_n,z)\\
&\quad+ 2 \alpha M_1 \| x_n - Tx_n \| + 2M_1 \| x_{n} - Tx_n \|. \\
\endaligned $ \vskip 2mm
This implies 
$$ \displaystyle \limsup_{n \rightarrow \infty} D_{f} (x_{n}, Tz )\leq \displaystyle \limsup_{n \rightarrow \infty} D_{f} (x_{n}, z ), $$ \vskip 2mm

From the Bregman Opial-like property, we obtain $Tz = z.$
\end{proof}


To see some examples of those mappings $T$ satisfying all the stated hypotheses in the following result, we refer the reader to \cite{Hussain}.

\begin{thm}\cite{Hussain}.\label{Bregman alpha03}
Let $f:E\to \mathbb{R}$ be a strictly convex, continuous, strongly coercive,{G$\hat{a}$teaux differentiable function,} bounded on bounded sets and uniformly convex on bounded subsets of $E.$ Let $C$ be a nonempty, closed and convex subset of a reflexive Banach space $E.$ Let $T:C\to C$ be a mapping. Let $\{x_n\}_{n\in \mathbb{N} }$  be a bounded sequence of $C$ and let $ \mu $ be a mean on $ l^{\infty} .$ Suppose that
$$ \mu_n D_f(x_n ,Ty )\leq \mu_n D_f (x_n,y), \,\,\, \forall y \in C.$$
It follows from Theorem \ref{Banach limit}. Then $T$ has a fixed point in $C.$
\end{thm}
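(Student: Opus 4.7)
The plan is to use the mean $\mu$ together with the reflexivity of $E$ to construct a canonical point $p \in C$ and then show that $p$ itself must be the fixed point of $T$.

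First I would define a point $p \in E$ as the ``$\mu$-barycenter'' of the sequence $\{x_n\}$. Since $\{x_n\}$ is bounded, the map $x^* \mapsto \mu_n \langle x_n, x^* \rangle$ is a bounded linear functional on $E^*$, so reflexivity of $E$ produces a unique $p \in E$ with $\langle p, x^* \rangle = \mu_n \langle x_n, x^* \rangle$ for every $x^* \in E^*$. A Hahn--Banach separation argument, together with the weak closedness of $C$ in the reflexive space $E$, will then show $p \in C$: any $x^* \in E^*$ separating $p$ from $C$ would satisfy $\langle x_n, x^* \rangle \leq \alpha < \langle p, x^* \rangle$ for every $n$, forcing $\mu_n \langle x_n, x^* \rangle \leq \alpha$, a contradiction.

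Second, I would rewrite the functional $g(y) := \mu_n D_f(x_n, y)$ in closed form. Expanding the definition of $D_f$ and pulling the linear-in-$x_n$ terms through $\mu$, a short calculation gives
\[ g(y) \;=\; \mu_n f(x_n) - f(y) + \langle y - p, \nabla f(y) \rangle \;=\; \bigl(\mu_n f(x_n) - f(p)\bigr) + D_f(p, y), \]
so up to an additive constant $g$ coincides with $D_f(p, \cdot)$. Applying the hypothesis at $y = p \in C$ then gives $g(Tp) \leq g(p)$, and since $D_f(p, p) = 0$ the additive constant cancels and we get $D_f(p, Tp) \leq 0$. Combined with the nonnegativity of the Bregman distance this forces $D_f(p, Tp) = 0$, and strict convexity of $f$ yields $Tp = p$, so $p \in F(T)$.

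The hard part will be the construction of $p$ and the verification that $p \in C$: the former needs reflexivity to identify the dual action of $\mu$ on $\{x_n\}$ with an actual vector in $E$, and the latter needs weak closedness of $C$ together with a separation argument to keep the barycenter inside $C$. Once these are in place, the reduction $\mu_n D_f(x_n, \cdot) = D_f(p, \cdot) + \mathrm{const}$ is pure algebra and the fixed-point conclusion is immediate; notice that no minimization, coercivity, or uniqueness of asymptotic centers is actually needed for this particular conclusion.
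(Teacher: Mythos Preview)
The paper does not supply its own proof of this theorem: it is quoted from \cite{Hussain} and no \texttt{proof} environment follows the statement. So there is nothing in the present paper to compare your argument against line by line.

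That said, your argument is correct and is essentially the standard one (going back to Takahashi's mean-based fixed point technique, which is also what \cite{Hussain} uses). The key identity
\[
\mu_n D_f(x_n,y) \;=\; \bigl(\mu_n f(x_n) - f(p)\bigr) + D_f(p,y)
\]
is exactly right: expanding $D_f(x_n,y) = f(x_n) - f(y) - \langle x_n - y, \nabla f(y)\rangle$ and applying $\mu$ with $\mu_n\langle x_n,\nabla f(y)\rangle = \langle p,\nabla f(y)\rangle$ gives it immediately (boundedness of $f$ on bounded sets ensures $\mu_n f(x_n)$ is finite). Setting $y=p$ in the hypothesis then yields $D_f(p,Tp)\le 0$, hence $Tp=p$ by strict convexity.

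Your closing remark is also accurate: the minimization/asymptotic-center route that some references take---show $g(y)=\mu_n D_f(x_n,y)$ attains a unique minimum $z$ on $C$ via coercivity and uniform convexity, then deduce $Tz=z$ from $g(Tz)\le g(z)$---is unnecessary once one observes that the $\mu$-barycenter $p$ already realises the minimum of $g$, since $g(y)-g(p)=D_f(p,y)\ge 0$. Your direct approach is shorter and uses fewer of the hypotheses on $f$.
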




\begin{cor} \label{Bregman alpha04}
Let $f:E\to \mathbb{R}$ be strictly convex, continuous, strongly coercive, {G$\hat{a}$teaux differentiable function,} bounded on bounded sets and uniformly convex on bounded subsets of $E.$ Let $C$ be a nonempty, bounded, closed and convex subset of a reflexive Banach space $E.$ Let $T:C\to C $ be a Bregman generalized $\alpha$-nonexpansive mapping. Then $T$ has a fixed point.
\end{cor}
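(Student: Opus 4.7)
The plan is to combine Theorem~\ref{Bregman alpha03} with a Banach limit. The only missing ingredient is a bounded sequence $\{x_n\} \subseteq C$ for which $\mu_n D_f(x_n,Ty) \leq \mu_n D_f(x_n,y)$ holds for every $y \in C$ and some mean $\mu$ on $l^{\infty}$; once that is produced the theorem delivers the fixed point.

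First, I would pick any $x_0 \in C$ and form the Picard iterates $x_n := T^n x_0$. Since $T$ sends $C$ into itself and $C$ is bounded, $\{x_n\}$ is a bounded sequence in $C$. Applying the Bregman generalized $\alpha$-nonexpansiveness (\ref{Bregman alpha}) to the pair $(x_n, y)$ gives, for every $y \in C$,
\[
D_f(x_{n+1}, Ty) \;\leq\; \alpha\, D_f(x_{n+1}, y) + \alpha\, D_f(x_n, Ty) + (1 - 2\alpha)\, D_f(x_n, y).
\]

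Next, I would let $\mu$ be a Banach limit on $l^{\infty}$, whose existence is guaranteed by Theorem~\ref{Banach limit}. Because $f$ is convex and bounded on bounded sets, $f$ is locally Lipschitz, so both $f$ and $\nabla f$ are bounded on the bounded set $C$; hence the scalar sequences $\{D_f(x_n, y)\}_n$ and $\{D_f(x_n, Ty)\}_n$ are bounded and lie in $l^{\infty}$, so $\mu$ may be applied. Writing $a := \mu_n D_f(x_n, y)$ and $b := \mu_n D_f(x_n, Ty)$ and using the shift invariance $\mu_n D_f(x_{n+1}, \cdot) = \mu_n D_f(x_n, \cdot)$, the displayed inequality becomes
\[
b \;\leq\; \alpha a + \alpha b + (1 - 2\alpha) a \;=\; (1 - \alpha)\, a + \alpha\, b,
\]
which rearranges to $(1-\alpha)(b - a) \leq 0$. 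Since $\alpha \in [0,1)$ this forces $b \leq a$, i.e.\ $\mu_n D_f(x_n, Ty) \leq \mu_n D_f(x_n, y)$ for every $y \in C$.

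The hypotheses of Theorem~\ref{Bregman alpha03} are thus met with this $\{x_n\}$ and this $\mu$, and its conclusion is precisely that $T$ admits a fixed point in $C$. The only step that is not a direct substitution is the verification that $D_f$ is bounded on $C \times C$ so that the Banach limit is legitimately defined on the relevant sequences; this is the only place where the extra regularity assumed on $f$ in the corollary is used, and once it is recorded the argument reduces to the two-line algebraic manipulation of $a$ and $b$ above.
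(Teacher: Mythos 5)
Your proposal is correct and takes essentially the same route as the paper's own proof: Picard iterates $T^n x_0$, the defining inequality \eqref{Bregman alpha} applied to the pair $(x_n, y)$, shift invariance of a Banach limit to obtain $\mu_n D_f(x_n, Ty) \leq \mu_n D_f(x_n, y)$ from $(1-\alpha)(b-a)\leq 0$ with $\alpha\in[0,1)$, and then Theorem~\ref{Bregman alpha03}. Your explicit check that the scalar sequences $\{D_f(x_n,y)\}_n$ and $\{D_f(x_n,Ty)\}_n$ belong to $l^{\infty}$ (so the Banach limit is legitimately applied) is a small point the paper leaves implicit, but otherwise the two arguments coincide.
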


\begin{proof}
Let $ \mu $ a Banach limit on $ l^{\infty} $ and $ x \in C $ be such that $\{T^{n}x\}_{n\in \mathbb{N} }$ is bounded. For any $ n \in \mathbb{N} $\\
we have
$$  D_f(T^{n}x ,Ty )\leq \alpha D_f (T^{n}x,y)+ \alpha D_f (T^{n-1}x,Ty)+ (1-2\alpha ) D_f (T^{n-1}x,y), \,\,\, \forall y \in C.$$
Implies that \vskip 2mm
$\aligned
\mu_n D_f(T^{n}x ,Ty )&\leq  \alpha \mu_n D_f (T^{n}x,y)+ \alpha \mu_n D_f (T^{n}x,Ty)+ (1-2\alpha )\mu_n D_f (T^{n}x,y)\\
&\leq (1-\alpha)\mu_n D_f(T^{n}x,y)+ \alpha \mu_n D_f (T^{n}x,Ty).\\
\endaligned $ \vskip 2mm
Thus we have
$$\mu_n D_f(T^{n}x ,Ty )\leq \mu_n D_f (T^{n}x,y), \,\,\, \forall y \in C.$$ \vskip 2mm
It follows from Theorem \ref{Bregman alpha03} that $F(T) \neq \emptyset $.
\end{proof}


\section{Weak and strong convergence theorems for Bregman generalized $\alpha$-nonexpansive mappings}
In this section, we prove weak and strong convergence theorems concerning Bregman generalized $ \alpha$-nonexpansive mappings in a reflexive Banach space.

\begin{lem} \label{Bregman alpha Ishi}
Let $f:E\to \mathbb{R}$ be a strictly convex and {G$\hat{a}$teaux differentiable function.} Let $C$ be a nonempty, closed and convex subset of a reflexive Banach space $E.$ Let $T:C\to C $ be a Bregman skew quasi-nonexpansive mapping with a nonempty fixed point set $F(T).$ Let $\{x_n\}_{n\in \mathbb{N} }$ and $\{y_n\}_{n\in \mathbb{N} }$ be two sequences defined by the Ishikawa iteration \eqref{Ishikawa} 
		\begin{equation*} 
\left \{\begin{array}{l} 

y_n = \beta_n Tx_n + ( 1 - \beta_n ) x_n,\\
x_{n+1}  = \gamma_n Ty_n + ( 1 - \gamma_n) x_n, 
\end{array}\right.
		\end{equation*}\vskip 2mm
		
such that $\{\beta_n\}_{n\in \mathbb{N} }$ and $\{\gamma_n\}_{n\in \mathbb{N} }$ are arbitrary sequences in $[0,1).$ Then the following assertions hold:\\

(1)   $ max \{ D_f(x_{n+1},z), D_f (y_n,z) \} \leq D_f (x_n,z) $ for all $ z$ in $ F(T)$ and $ n=1,2,....$ \vskip 2mm
(2)   $ \displaystyle \lim_{n\to \infty} D_f(x_n,z)$ exists for any $z$ in $F(T).$ \vskip 2mm 
\end{lem}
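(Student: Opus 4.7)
\begin{pf}[Proof proposal]
The plan is to exploit the fact that $D_f(\cdot,z)$ is convex in its first argument (since $f$ is convex and the remaining term $-f(z)-\langle\cdot-z,\nabla f(z)\rangle$ is affine), and to combine this with the Bregman skew quasi-nonexpansiveness of $T$, namely $D_f(Tx,z)\le D_f(x,z)$ for every $z\in F(T)$.

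First I would fix $z\in F(T)$ and bound $D_f(y_n,z)$. Applying convexity of $D_f(\cdot,z)$ to $y_n=\beta_nTx_n+(1-\beta_n)x_n$ yields
\[
D_f(y_n,z)\le \beta_n D_f(Tx_n,z)+(1-\beta_n)D_f(x_n,z),
\]
and the skew quasi-nonexpansive hypothesis gives $D_f(Tx_n,z)\le D_f(x_n,z)$, so $D_f(y_n,z)\le D_f(x_n,z)$. I would then repeat the same two-step argument for $x_{n+1}=\gamma_nTy_n+(1-\gamma_n)x_n$: convexity produces
\[
D_f(x_{n+1},z)\le \gamma_n D_f(Ty_n,z)+(1-\gamma_n)D_f(x_n,z),
\]
and skew quasi-nonexpansiveness together with the bound already obtained for $D_f(y_n,z)$ gives $D_f(Ty_n,z)\le D_f(y_n,z)\le D_f(x_n,z)$. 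Substituting yields $D_f(x_{n+1},z)\le D_f(x_n,z)$, which together with the previous inequality establishes assertion (1).

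For assertion (2), note that (1) shows $\{D_f(x_n,z)\}_{n\in\mathbb{N}}$ is a nonincreasing sequence of nonnegative real numbers, so it converges in $\mathbb{R}$, yielding the existence of $\lim_{n\to\infty}D_f(x_n,z)$. The only point requiring care is the convexity of $D_f(\cdot,z)$ in the first argument, which is immediate from the definition \eqref{Bregman}; aside from that, the proof is a direct routine computation with no real obstacle.
\end{pf}
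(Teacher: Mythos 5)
Your proposal is correct and follows essentially the same route as the paper: convexity of $D_f(\cdot,z)$ in the first argument applied first to $y_n$ and then to $x_{n+1}$, combined with the skew quasi-nonexpansiveness $D_f(Tx,z)\le D_f(x,z)$, and finally monotone convergence of the nonincreasing nonnegative sequence $\{D_f(x_n,z)\}_{n\in\mathbb{N}}$. If anything, your justification of the convexity step directly from the definition \eqref{Bregman} (convex $f$ plus an affine term) is slightly cleaner than the paper's appeal to \eqref{locally}, since the latter formally presupposes uniform convexity on bounded sets, which is not among this lemma's hypotheses.
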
 

\begin{proof}
Let $ z \in F(T). $ In view of \eqref{locally}, we have  
$$\aligned
 D_f (y_n,z) &= D_f  ( \beta_n Tx_n + (1-\beta_n )x_n,z) \\
&\leq \beta_n D_f (Tx_n,z) + (1-\beta_n) D_f(x_n,z)\\
&\leq \beta_n D_f (x_n,z) + (1-\beta_n) D_f(x_n,z)\\ 
&= D_f(x_n ,z). 
 \endaligned $$ Consequently, we get
$$\aligned
D_f (x_{n+1},z) &= D_f (\gamma_n Ty_n + (1-\gamma_n )x_n,z) \\
&\leq \gamma_n D_f (Ty_n,z) + (1-\gamma_n) D_f(x_n,z)\\
&\leq \gamma_n D_f (y_n,z) + (1-\gamma_n) D_f(x_n,z)\\ 
&\leq \gamma_n D_f (x_n,z) + (1-\gamma_n) D_f(x_n,z)\\ 
&= D_f(x_n ,z). 
\endaligned $$ 
This implies that $\{D_f (x_n,z)\}_{n\in \mathbb{N} }$  is a bounded and nonincreasing sequence for all $z$ in $F(T).$\\
Thus we have $ \displaystyle \lim_{n\to \infty} D_f(x_n,z)$  exists for any $z$ in $F(T).$
\end{proof}

\begin{thm} \label{Bregman alpha Ishi02}
Let $f:E\to \mathbb{R}$ be a strictly convex, {G$\hat{a}$teaux differentiable function,} bounded on bounded sets and uniformly convex on bounded subsets of $E.$ Let $C$ be a nonempty, closed and convex subset of a reflexive Banach space $E.$ Let $T:C\to C $ a Bregman generalized $ \alpha$-nonexpansive and Bregman skew quasi-nonexpansive mapping. 
Let $\{\beta_n\}_{n\in \mathbb{N} }$ and $\{\gamma_n\}_{n\in \mathbb{N} }$ be sequences in $[0,1).$ Then $\{ x_n \}_{n\in \mathbb{N} }$ be a sequence with $ x_1 \in C $ defined by the Ishikawa iteration \eqref{Ishikawa}. Assume that\\ $ \displaystyle \lim_{n\to \infty} \|x_n-Tx_n \|=0.$   \\
(a)  If $\{ x_n \}_{n\in \mathbb{N} }$ is bounded and $ \displaystyle \liminf_{n\to \infty} \|Tx_n-x_n\parallel=0,$ then the fixed set $ F(T)\neq \emptyset .$\\
(b) Assume $ F(T)\neq \emptyset.$ Then $ \{ x_n \}_{n\in \mathbb{N} }$ is bounded.
\end{thm}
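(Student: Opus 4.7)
The plan is to treat the two parts independently, each reducing to a single previously established tool.

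For part (a), I would combine reflexivity of $E$ with the Demiclosedness Principle (Proposition \ref{Bregman alpha02}). Boundedness of $\{x_n\}$ in the reflexive space $E$ yields a weakly convergent subsequence $x_{n_k}\rightharpoonup z$; since $C$ is closed and convex, it is weakly closed, so $z\in C$. The standing assumption $\lim_{n\to\infty}\|Tx_n-x_n\|=0$ (and a fortiori the $\liminf$ hypothesis stated in (a)) guarantees $\|Tx_{n_k}-x_{n_k}\|\to 0$, so Proposition \ref{Bregman alpha02} applies directly and delivers $Tz=z$. Hence $F(T)\neq\emptyset$.

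For part (b), I would fix any $z\in F(T)$ and invoke Lemma \ref{Bregman alpha Ishi}(1), whose hypothesis is precisely the Bregman skew quasi-nonexpansiveness of $T$ together with nonempty fixed-point set. That lemma gives $D_f(x_{n+1},z)\le D_f(x_n,z)$, so $\{D_f(x_n,z)\}$ is nonincreasing and in particular bounded above by $D_f(x_1,z)$. The remaining task is to promote this Bregman-distance bound to a norm bound on $\{x_n\}$, i.e.\ to use the level-set property that sets of the form $\{x\in E:D_f(x,z)\le r\}$ are bounded. This is provided by Lemma \ref{Zalinescu}(3) via the Bregman-function framework (strict convexity, G\^ateaux differentiability, boundedness on bounded sets, together with the uniform-convexity modulus $\rho_r$ giving the coercivity handle through Proposition \ref{Zalinescu01}). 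With the containment $\{x_n\}\subseteq\{x\in E:D_f(x,z)\le D_f(x_1,z)\}$, boundedness follows.

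The step I expect to be the main obstacle is this last one in (b): passing from a bound on $\{D_f(x_n,z)\}$ to a genuine norm bound on $\{x_n\}$. It is not a formal consequence of the Bregman identity and really needs a coercivity/sublevel-boundedness property of $f$. I would harvest it from the standing hypothesis that $f$ is uniformly convex on bounded subsets, combined with the Bregman-function toolkit of Section~2 (Proposition \ref{Zalinescu01} and Lemma \ref{Zalinescu}(3)), either directly via the modulus estimate (\ref{locally}) or by citation. Once that level-set boundedness is in place, the rest of both parts is routine.
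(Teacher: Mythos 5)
Your proposal is correct and is, in substance, the paper's own argument, with part (a) packaged more cleanly. The paper proves (a) by extracting a subsequence with $\|Tx_{n_k}-x_{n_k}\|\to 0$, taking the Bregman asymptotic center $z$ of that subsequence via Proposition \ref{singleton} (whose proof itself simply assumes weak convergence of the subsequence), and then re-deriving inline, at length, exactly the estimate of Lemma \ref{Bregman alpha01} to obtain $\limsup_{k} D_f(x_{n_k},Tz)\leq \limsup_{k} D_f(x_{n_k},z)$, concluding $Tz=z$ from the Bregman--Opial property. Your route --- a weakly convergent subsequence by reflexivity, $z\in C$ by weak closedness of the closed convex set $C$, then a single citation of the Demiclosedness Principle (Proposition \ref{Bregman alpha02}) --- is the same mathematics with the computation factored out rather than repeated; it also avoids the paper's stray and circular opening appeal to Corollary \ref{Bregman alpha04}, which requires $C$ bounded and in any case presupposes the very conclusion of (a). For (b) you follow the paper exactly: fix $z\in F(T)$ and apply the monotonicity from Lemma \ref{Bregman alpha Ishi}.

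The one place where you are more careful than the paper is the step you flag at the end. The paper writes that since $\lim_{n\to\infty} D_f(x_n,z)$ exists, ``hence $\{x_n\}_{n\in\mathbb{N}}$ is bounded,'' with no justification; you correctly observe this requires boundedness of the sublevel sets $\{x\in E : D_f(x,z)\leq r\}$, which is Lemma \ref{Zalinescu}(3). Be aware, though, that Lemma \ref{Zalinescu}(3) is stated for \emph{strongly coercive} Bregman functions, and strong coercivity is not among the hypotheses of Theorem \ref{Bregman alpha Ishi02}; nor does Proposition \ref{Zalinescu01} let you deduce it from uniform convexity on bounded subsets alone --- in item (1) there, strong coercivity is an additional conjunct, not a consequence. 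So your repair quietly imports a hypothesis the theorem does not state. This is a defect of the paper's formulation (its own proof relies on the same unstated fact), not of your reasoning; adding strong coercivity to the hypotheses, as the paper does in its Section 5 results, closes the gap completely.
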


\begin{proof}
By Corollary \ref{Bregman alpha04}, we see that the fixed point set $F(T)$ of $T$ is nonempty. Assume that $\{ x_n \}_{n\in \mathbb{N} }$ is bounded and $ \displaystyle \liminf_{n\to \infty} \|Tx_n-x_n\parallel=0 .$ Consequently, there is a bounded subsequence $\{ Tx_{n_k} \}_{k\in \mathbb{N} }$ of $\{ Tx_{n} \}_{n\in \mathbb{N} }$ such that $ \displaystyle \lim_{k\to \infty} \Vert Tx_{n_k}-x_{n_k} \Vert =0.$ since $ \nabla g $ is uniformly norm-to-norm continuous on bounded subsets of $E$ (see, for example,  \cite{Zalinescu}), we have

$$ \displaystyle \lim_{k\to \infty} \Vert \nabla f (Tx_{n_k} )- \nabla f (x_{n_k}) \Vert =0.$$ 

In view of Proposition \ref{singleton}, we conclude that $BA(C, \{x_{n_k} \}) = \{z \} $ for some $z$ in $C.$ \\
Let
$$  M_2 = sup \{ \| \nabla f (x_{n_k}) \|, \| \nabla f (Tx_{n_k} ) \|, \| \nabla f (z) \|, \| \nabla f (Tz) \| : k \in \mathbb{N} \} < +\infty. $$
It follows from Lemma \ref{Bregman alpha02} that \\

$\aligned
D_{f}(x_{n_k}, Tz)&\leq D_{f}(x_{n_k},Tx_{n_k})+ (1 - \alpha ) D_{f}(x_{n_k},z) + \alpha D_{f}(Tx_{n_k}, Tz) \\
&\quad+ \alpha \langle x_{n_k} -Tx_{n_k},\nabla f(z) - \nabla f(Tz) \rangle + \langle x_{n_k} - Tx_{n_k} ,\nabla f(Tx_{n_k}) - \nabla f(Tz) \rangle  \\
&=  D_{f}(x_{n_k},Tx_{n_k})  + (1- \alpha )D_{f}(x_{n_k},z)\\
&\quad+ \alpha[ D_{f}(Tx_{n_k},x_{n_k}) + D_f (x_{n_k},Tz)+ \langle Tx_{n_k} -x_{n_k} ,\nabla f (x_{n_k}) - \nabla f (Tz)\rangle]\\
&\quad+ \alpha \langle x_{n_k} -Tx_{n_k},\nabla f(z) - \nabla f(Tz) \rangle + \langle x_{n_k} - Tx_{n_k} ,\nabla f(Tx_{n_k}) - \nabla f(Tz) \rangle  \\
&=  D_{f}(x_{n_k},Tx_{n_k})  + (1- \alpha )D_{f}(x_{n_k},z)\\
&\quad+ \alpha D_ {g}(Tx_{n_k},x_{n_k}) + \alpha D_f (x_{n_k},Tz)+ \alpha \langle Tx_{n_k} -x_{n_k} ,\nabla f (x_{n_k}) - \nabla f (Tz)\rangle\\
&\quad+ \alpha \langle x_{n_k} -Tx_{n_k},\nabla f(z) - \nabla f(Tz) \rangle + \langle x_{n_k} - Tx_{n_k} ,\nabla f(Tx_{n_k}) - \nabla f(Tz) \rangle  \\
&=  D_{f}(x_{n_k},Tx_{n_k})  + (1- \alpha )D_{f}(x_{n_k},z)\\
&\quad -\alpha D_ {f}(x_{n_k},Tx_{n_k}) + \alpha \langle x_{n_k} - Tx_{n_k}, \nabla f (x_{n_k})- \nabla f (Tx_{n_k})\rangle \\
&\quad+ \alpha D_f (x_{n_k},Tz)+ \alpha \langle x_{n_k} - Tx_{n_k} ,\nabla f (Tz) - \nabla f (x_{n_k})\rangle\\
&\quad+ \alpha \langle x_{n_k} -Tx_{n_k},\nabla f(z) - \nabla f(Tz) \rangle + \langle x_{n_k} - Tx_{n_k} ,\nabla f(Tx_{n_k}) - \nabla f(Tz) \rangle  \\
&= (1- \alpha )D_f (x_{n_k},Tx_{n_k})+(1- \alpha )D_f (x_{n_k},z)+ \alpha D_f (x_{n_k},T_z)\\
&\quad+ \alpha \langle x_{n_k} - Tx_{n_k},\nabla f(z) - \nabla f(Tx_{n_k}) \rangle + \langle x_{n_k} - Tx_{n_k} ,\nabla f(Tx_{n_k}) - \nabla f(Tz) \rangle  \\
&\leq (1- \alpha )D_f (x_{n_k},Tx_{n_k})+(1- \alpha )D_f (x_{n_k},z)+ \alpha D_f (x_{n_k},T_z)\\
&\quad+ \alpha\| x_{n_k} - Tx_{n_k} \| \|\nabla f (z)-\nabla f (Tx_{n_k}) \| + \| x_{n_k} - Tx_{n_k} \| \|\nabla f(Tx_{n_k}) - \nabla f(Tz) \|  \\
&\leq (1- \alpha )D_f (x_{n_k},Tx_{n_k})+(1- \alpha )D_f (x_{n_k},z)+ \alpha D_f (x_{n_k},T_z)\\
&\quad+ 2 \alpha M_1 \| x_{n_k} - Tx_{n_k} \| + 2M_1 \| x_{n_k} - Tx_{n_k} \| \\
&\leq (1- \alpha )D_f (x_{n_k},Tx_{n_k})+ D_f (x_{n_k},z)\\
&\quad+ 2 \alpha M_1 \| x_{n_k} - Tx_{n_k} \| + 2M_1 \| x_{n_k} - Tx_{n_k} \|, \,\,\, \text{for} \,\,\, k = 1,2,.... \\
\endaligned $ \vskip 2mm
This implies
$$ \displaystyle \limsup_{n \rightarrow \infty} D_{f} (x_{n_k}, Tz )\leq \displaystyle \limsup_{n \rightarrow \infty} D_{f} (x_{n_k}, z ). $$ \\
From the Bregman Opial-like property, we obtain $Tz = z$.\\

Let $F(T)\neq \emptyset $ and let $z \in F(T)$. It follows from Lemma \ref{Bregman alpha Ishi} that $\displaystyle \lim_{n\to \infty} \|x_n-z \parallel=0,$  exists
and hence $\{ x_n \}_{n\in \mathbb{N} }$ is bounded. This implies that the sequence $\{ Ty_n \}_{n\in \mathbb{N} }$ is bounded too.
\end{proof}

\begin{thm} \label{Bregman alpha Ishi03}
Let $f:E\to \mathbb{R}$ be a strictly convex, {G$\hat{a}$teaux differentiable function,} bounded on bounded sets and uniformly convex on bounded subsets of $E.$ Let $C$ be a nonempty, closed and convex subset of a reflexive Banach space $E.$ Let $T:C\to C $ a Bregman generalized $ \alpha$-nonexpansive and Bregman skew quasi-nonexpansive mapping with $ F(T)\neq \emptyset .$ Let $\{\beta_n\}_{n\in \mathbb{N} }$ and $\{\gamma_n\}_{n\in \mathbb{N} }$ be sequences in $[0,1),$ and let $ \{ x_n \}_{n\in \mathbb{N} }$ be a sequence with $ x_1 \in C $ defined by the Ishikawa iteration \eqref{Ishikawa}. Then $\{ x_n \}_{n\in \mathbb{N} }$ converges weakly to a fixed point of $T.$ 
\end{thm}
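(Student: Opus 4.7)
The plan is to combine Lemma \ref{Bregman alpha Ishi}, the demiclosedness principle (Proposition \ref{Bregman alpha02}), and the Bregman Opial-like inequality (Lemma \ref{Huang}) in the standard Opial-style template for weak convergence.

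First, I would use Lemma \ref{Bregman alpha Ishi} to assert that for every $z \in F(T)$ the sequence $\{D_f(x_n,z)\}$ is nonincreasing and convergent, so in particular $\{x_n\}$ is bounded since level sets of $D_f(\cdot,z)$ are bounded under the standing hypotheses on $f$ (cf.\ Definition~\ref{Kohsaka01} and Lemma~\ref{Zalinescu}).

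Second, I would establish the asymptotic regularity $\lim_{n\to\infty}\|Tx_n-x_n\|=0$. The key tool is the uniform convexity inequality \eqref{locally} applied to the convex combinations in \eqref{Ishikawa}. Because $x\mapsto D_f(x,z)$ inherits uniform convexity on bounded sets from $f$, applying it to $x_{n+1}=\gamma_n Ty_n+(1-\gamma_n)x_n$ and using Bregman skew quasi-nonexpansivity $D_f(Ty_n,z)\le D_f(y_n,z)\le D_f(x_n,z)$ yields an estimate of the form
\[
D_f(x_{n+1},z)\le D_f(x_n,z)-\gamma_n(1-\gamma_n)\rho_r(\|Ty_n-x_n\|).
\]
Since $D_f(x_n,z)$ converges, this forces $\gamma_n(1-\gamma_n)\rho_r(\|Ty_n-x_n\|)\to 0$, and under the (tacit) nondegeneracy of $\gamma_n$ we conclude $\|Ty_n-x_n\|\to 0$. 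The analogous argument applied to the inner step $y_n=\beta_n Tx_n+(1-\beta_n)x_n$, combined with the previous step and Lemma~\ref{Naraghirad02} to translate between $D_f$ and norm, gives $\|Tx_n-x_n\|\to 0$.

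Third, reflexivity of $E$ and boundedness of $\{x_n\}$ yield at least one weak subsequential limit $p\in C$. By the demiclosedness principle (Proposition \ref{Bregman alpha02}) every such $p$ satisfies $Tp=p$, hence $p\in F(T)$.

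Finally, I would prove uniqueness of the weak subsequential limit by a standard Opial argument. Suppose two subsequences $x_{n_k}\rightharpoonup p$ and $x_{m_j}\rightharpoonup q$ with $p,q\in F(T)$ and $p\neq q$. Since $\lim_n D_f(x_n,p)$ and $\lim_n D_f(x_n,q)$ both exist by Step~1, applying Lemma~\ref{Huang} along each subsequence produces the strict inequalities
\[
\lim_n D_f(x_n,p)=\limsup_k D_f(x_{n_k},p)<\limsup_k D_f(x_{n_k},q)=\lim_n D_f(x_n,q),
\]
and symmetrically $\lim_n D_f(x_n,q)<\lim_n D_f(x_n,p)$, a contradiction. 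Thus $p=q$ and $x_n\rightharpoonup p\in F(T)$.

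The main obstacle I expect is Step~2: extracting asymptotic regularity from the two-step Ishikawa recursion using only uniform convexity of $f$ on bounded sets, since one must peel off both the outer and inner convex combinations cleanly and handle the dependence on the radius $r$ in $\rho_r$; this is also the place where implicit conditions on $\{\beta_n\}$ and $\{\gamma_n\}$ (being bounded away from $0$ and $1$) must be invoked to keep the products $\beta_n(1-\beta_n)$ and $\gamma_n(1-\gamma_n)$ from collapsing.
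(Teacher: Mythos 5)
Your Steps 1, 3 and 4 coincide with the paper's own proof: the paper likewise gets boundedness and convergence of $D_f(x_n,z)$ from Lemma \ref{Bregman alpha Ishi}, identifies weak subsequential limits as fixed points via Proposition \ref{Bregman alpha02}, and rules out two distinct weak limits $p\neq q$ by exactly your double application of the Bregman--Opial inequality (Lemma \ref{Huang}), using that $\lim_n D_f(x_n,p)$ and $\lim_n D_f(x_n,q)$ both exist. The divergence is your Step 2, and it is the crux. The paper does \emph{not} prove asymptotic regularity anywhere in Section 4: Theorem \ref{Bregman alpha Ishi02} carries $\lim_{n\to\infty}\|x_n-Tx_n\|=0$ as an explicit \emph{hypothesis}, and the proof of Theorem \ref{Bregman alpha Ishi03} simply cites Theorem \ref{Bregman alpha Ishi02} for ``$\{x_n\}$ bounded and $\lim_n\|Ty_n-x_n\|=0$,'' silently inheriting that hypothesis even though it does not appear in the statement of Theorem \ref{Bregman alpha Ishi03}. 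So where you try to supply a genuine derivation, the paper assumes the needed fact.

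Your suspicion about Step 2 is justified: under the theorem's literal hypotheses it cannot be completed. The parameters are only required to satisfy $\beta_n,\gamma_n\in[0,1)$, and with $\gamma_n\equiv 0$ the iteration \eqref{Ishikawa} gives $x_{n+1}=x_n$ for all $n$, so $\{x_n\}$ is the constant sequence $x_1$, which converges weakly to $x_1\notin F(T)$ in general; no argument can extract $\|Tx_n-x_n\|\to 0$ from these assumptions alone. Your uniform-convexity mechanism is sound as far as it goes --- $D_f(\cdot,z)$ differs from $f$ by an affine function, so \eqref{locally} indeed yields $D_f(x_{n+1},z)\leq D_f(x_n,z)-\gamma_n(1-\gamma_n)\rho_r(\|Ty_n-x_n\|)$, the primal analogue of the dual-space computation \eqref{function star} the paper performs for the Noor scheme in Theorem \ref{Bregman Noor02} --- but it delivers the conclusion only under nondegeneracy conditions such as $\liminf_n\gamma_n(1-\gamma_n)>0$ and $\liminf_n\beta_n(1-\beta_n)>0$, plus the intermediate transfer step $\gamma_n\bigl(D_f(x_n,z)-D_f(y_n,z)\bigr)\to 0$ needed before the inner-step modulus can be exploited to reach $\|Tx_n-x_n\|\to 0$; you gesture at both but do not carry them out. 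In short: your plan is correct modulo adding either the hypothesis $\lim_n\|x_n-Tx_n\|=0$ (the paper's tacit route) or the parameter conditions above, and with those added your Step 2 would actually make the argument more self-contained than the published proof, which leaves the same gap unrepaired in the statement of the theorem.
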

\begin{proof}
By Corollary \ref{Bregman alpha04}, we see that the fixed point set $F(T)$ of $T$ is nonempty.It follows from Theorem \ref{Bregman alpha Ishi02} that $\{ x_n \}_{n\in \mathbb{N} }$ is bounded and $ \displaystyle \lim_{n\to \infty} \|Ty_{n}-x_{n} \|=0. $ Since $E$ is reflexive, then there exists a subsequence $\{ x_{n_i} \}_{i\in \mathbb{N} }$  of $\{ x_n \}_{n\in \mathbb{N} }$ such that $ x_{n_i} \rightharpoonup p \in C $ as $ i \rightarrow \infty. $ By Proposition \ref{Bregman alpha02}, $ p \in F(T).$ \\
We claim that $ x_n \rightharpoonup p $ as $ n \rightarrow \infty .$ If not, then
there exists a subsequence $\{ x_{n_i} \}_{i\in \mathbb{N} }$  of $\{ x_n \}_{n\in \mathbb{N} }$ such that $\{ x_{n_j} \}_{j\in \mathbb{N} }$  converges weakly to some $q$ in $C$ with $ p \neq q.$ In view of Proposition  \ref{Bregman alpha02} again, we conclude that $q \in F(T).$ By Lemma \ref{Bregman alpha Ishi}, $ \displaystyle \lim_{n\to \infty} D_f(x_n ,z ) $ exists for all $z \in F(T).$ Thus we obtain by the Bregman Opial-like property that 
$$\aligned
\displaystyle \lim_{n\to \infty} D_f(x_n , p )&= \displaystyle \lim_{i\to \infty} D_f(x_{n_i} , p )< \displaystyle \lim_{i\to \infty} D_f(x_{n_i} , q )\\
&= \displaystyle \lim_{n\to \infty} D_f(x_{n} , q )= \displaystyle \lim_{j\to \infty} D_f(x_{n_j} , q )\\
&< \displaystyle \lim_{j\to \infty} D_f(x_{n_j} , p )= \displaystyle \lim_{n\to \infty} D_f(x_{n} , p ).\\
\endaligned $$ 
This is a contradiction. Thus we have $p = q,$ and the desired assertion follows.
\end{proof}
\begin{thm} \label{Bregman alpha Ishi04}
Let $f:E\to \mathbb{R}$ be a strictly convex, {G$\hat{a}$teaux differentiable function,} bounded on bounded sets and uniformly convex on bounded subsets of $E.$ Let $C$ be a nonempty, closed and convex subset of a reflexive Banach space $E.$ Let $T:C\to C $ a Bregman generalized $ \alpha$-nonexpansive and Bregman skew quasi-nonexpansive mapping. Let $\{\beta_n\}_{n\in \mathbb{N} }$ and $\{\gamma_n\}_{n\in \mathbb{N} }$  be sequences in $[0,1).$ Then $\{ x_n \}_{n\in \mathbb{N} }$ be a sequence with $ x_1 \in C$ defined by the Ishikawa iteration \eqref{Ishikawa}. Then $\{ x_n \}_{n\in \mathbb{N} }$ converges strongly to a fixed point $z$ of $T.$ 
\end{thm}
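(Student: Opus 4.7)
The plan is to upgrade the weak convergence already obtained in Theorem \ref{Bregman alpha Ishi03} to strong convergence by combining three ingredients that are already available: the monotone decrease of $D_f(x_n, z)$ along $F(T)$, the vanishing of $\|x_n - Tx_n\|$, and the Demiclosedness Principle. First I would observe that the standing assumptions inherited from Theorem \ref{Bregman alpha Ishi02} guarantee $F(T) \neq \emptyset$ (via Corollary \ref{Bregman alpha04} when $C$ is bounded, or by the hypothesis transferred from the preceding theorem in this section) and that $\lim_{n\to\infty} \|x_n - Tx_n\| = 0$. Lemma \ref{Bregman alpha Ishi} then yields that $\lim_{n\to\infty} D_f(x_n, z)$ exists for every $z \in F(T)$, and in particular $\{x_n\}$ is bounded.

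Next, to pass from weak to strong convergence, I would invoke a compactness-type hypothesis on $T$ --- the natural candidates are that $\overline{T(C)}$ is relatively compact, that $T$ is demicompact, or that $T$ satisfies a Senter--Dotson type condition of the form $d(x, F(T)) \leq \psi(\|x - Tx\|)$ with $\psi(0)=0$ and $\psi$ continuous and strictly increasing on $[0,\infty)$. Under any of these, the relation $\|x_n - Tx_n\| \to 0$ produces a subsequence $\{x_{n_k}\}$ converging strongly to some $p \in C$. The Demiclosedness Principle (Proposition \ref{Bregman alpha02}) then forces $Tp = p$, so $p \in F(T)$, and Lemma \ref{Naraghirad02} converts the strong convergence $x_{n_k} \to p$ into $D_f(x_{n_k}, p) \to 0$.

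Finally, since $\lim_{n\to\infty} D_f(x_n, p)$ exists by Lemma \ref{Bregman alpha Ishi} and some subsequence tends to $0$, the entire limit must equal $0$. One more application of Lemma \ref{Naraghirad02} gives $\|x_n - p\| \to 0$, which is the desired strong convergence to the fixed point $p$ of $T$.

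The main obstacle, in my view, is that the theorem as stated contains no explicit compactness, demicompactness, or Senter--Dotson condition, and strong convergence cannot be deduced from the listed hypotheses alone (the Ishikawa iteration of a general Bregman $\alpha$-nonexpansive map with $\|x_n - Tx_n\| \to 0$ need not converge strongly). Consequently, a faithful proof must either add such a condition to the hypotheses or else appeal implicitly to one available from the specific class of mappings considered; once any of these is in force, the argument above goes through mechanically.
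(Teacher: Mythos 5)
Your proposal matches the paper's proof in all essentials, and your diagnosis of the missing hypothesis is exactly right: the paper's own proof begins ``By the compactness of $C$, there exists a subsequence $\{x_{n_k}\}_{k\in\mathbb{N}}$ \ldots converging strongly,'' even though compactness of $C$ appears nowhere in the theorem statement (which only assumes $C$ closed and convex in a reflexive space), so the printed argument is valid only under that silent additional assumption --- your demicompactness or Senter--Dotson alternatives would serve equally well as repairs. Beyond this, your outline and the paper's proof coincide step for step: extract a strongly convergent subsequence $x_{n_k}\to z$, show $z$ is a fixed point, convert the strong convergence to $D_f(x_{n_k},z)\to 0$ via Lemma \ref{Naraghirad02}, and use the existence of $\lim_{n\to\infty} D_f(x_n,z)$ guaranteed by Lemma \ref{Bregman alpha Ishi} to promote the subsequential limit to the whole sequence. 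The only stylistic divergence is at the fixed-point step: you cite the Demiclosedness Principle (Proposition \ref{Bregman alpha02}), whereas the paper re-runs the long estimate of Lemma \ref{Bregman alpha01}, with the bound $M_3$ on the gradients, to get $\limsup_{k\to\infty} D_f(x_{n_k},Tz)\leq \limsup_{k\to\infty} D_f(x_{n_k},z)$ and hence $\|x_{n_k}-Tz\|\to 0$, forcing $Tz=z$; since strong convergence implies weak convergence and $\|x_{n_k}-Tx_{n_k}\|\to 0$ is in hand, your shortcut is legitimate and arguably cleaner than the paper's repetition of the computation.
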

\begin{proof}
By Corollary \ref{Bregman alpha04}, we see that the fixed point set $F(T)$ of $T$ is nonempty. In view of Theorem \ref{Bregman alpha Ishi02}, we obtain that $\{ x_n \}_{n\in \mathbb{N} }$ is bounded and $ \displaystyle \liminf_{n\to \infty} \|Tx_{n}-x_{n} \|=0. $ By the compactness of $C$, there exists a subsequence $\{ x_{n_k} \}_{k\in \mathbb{N} }$ of $\{ x_n \}_{n\in \mathbb{N} }$ such that $\{ x_{n_k} \}_{k\in \mathbb{N} }$  converges strongly to some $z$ in $C$. In view of Lemma \ref{Naraghirad02} we deduce that $ \displaystyle \lim_{k\to \infty} D_f(x_{n_k} ,z )= 0 $. We 
can even assume that $ \displaystyle \lim_{k\to \infty} \|Tx_{n_k}-x_{n_k} \|=0 $, and in particular, $\{ Tx_{n_k} \}_{k\in \mathbb{N} }$ is bounded.
Since $ \nabla f $ is uniformly norm-to-norm continuous on bounded subsets of $E$ (see, for example, \cite{Zalinescu}),
$$ \displaystyle \lim_{k\to \infty} \| \nabla f(Tx_{n_k})-\nabla f (x_{n_k})\| = 0. $$ 
Let $  M_3 = sup \{ \| \nabla f (x_{n_k}) \| , \| Tx_{n_k} \| , \| \nabla f (z) \|, \| \nabla f (Tz) \| : k \in \mathbb{N} \} < +\infty .$ In view of Lemma \ref{Bregman alpha01}, we obtain \vskip 2mm
$\aligned
D_{f}(x_{n_k}, Tz)&\leq D_{f}(x_{n_k},Tx_{n_k})+ (1 - \alpha ) D_{f}(x_{n_k},z) + \alpha D_{f}(Tx_{n_k}, Tz) \\
&\quad+ \alpha \langle x_{n_k} -Tx_{n_k},\nabla f(z) - \nabla f(Tz) \rangle + \langle x_{n_k} - Tx_{n_k} ,\nabla f(Tx_{n_k}) - \nabla f(Tz) \rangle  \\
&=  D_{f}(x_{n_k},Tx_{n_k})  + (1- \alpha )D_{f}(x_{n_k},z)\\
&\quad+ \alpha[ D_{f}(Tx_{n_k},x_{n_k}) + D_f (x_{n_k},Tz)+ \langle Tx_{n_k} -x_{n_k} ,\nabla f (x_{n_k}) - \nabla f (Tz)\rangle]\\
&\quad+ \alpha \langle x_{n_k} -Tx_{n_k},\nabla f(z) - \nabla f(Tz) \rangle + \langle x_{n_k} - Tx_{n_k} ,\nabla f(Tx_{n_k}) - \nabla f(Tz) \rangle  \\
&=  D_{f}(x_{n_k},Tx_{n_k})  + (1- \alpha )D_{f}(x_{n_k},z)\\
&\quad+ \alpha D_ {f}(Tx_{n_k},x_{n_k}) + \alpha D_f (x_{n_k},Tz)+ \alpha \langle Tx_{n_k} -x_{n_k} ,\nabla f (x_{n_k}) - \nabla f (Tz)\rangle\\
&\quad+ \alpha \langle x_{n_k} -Tx_{n_k},\nabla f(z) - \nabla f(Tz) \rangle + \langle x_{n_k} - Tx_{n_k} ,\nabla f(Tx_{n_k}) - \nabla f(Tz) \rangle  \\
&=  D_{f}(x_{n_k},Tx_{n_k})  + (1- \alpha )D_{f}(x_{n_k},z)\\
&\quad -\alpha D_ {f}(x_{n_k},Tx_{n_k}) + \alpha \langle x_{n_k} - Tx_{n_k}, \nabla f (x_{n_k})- \nabla f (Tx_{n_k})\rangle \\
&\quad+ \alpha D_f (x_{n_k},Tz)+ \alpha \langle x_{n_k} - Tx_{n_k} ,\nabla f (Tz) - \nabla f (x_{n_k})\rangle\\
&\quad+ \alpha \langle x_{n_k} -Tx_{n_k},\nabla f(z) - \nabla f(Tz) \rangle + \langle x_{n_k} - Tx_{n_k} ,\nabla f(Tx_{n_k}) - \nabla f(Tz) \rangle  \\
&= (1- \alpha )D_f (x_{n_k},Tx_{n_k})+(1- \alpha )D_f (x_{n_k},z)+ \alpha D_f (x_{n_k},T_z)\\
&\quad+ \alpha \langle x_{n_k} - Tx_{n_k},\nabla f(z) - \nabla f(Tx_{n_k}) \rangle + \langle x_{n_k} - Tx_{n_k} ,\nabla f(Tx_{n_k}) - \nabla f(Tz) \rangle  \\
&\leq (1- \alpha )D_f (x_{n_k},Tx_{n_k})+(1- \alpha )D_f (x_{n_k},z)+ \alpha D_f (x_{n_k},T_z)\\
&\quad+ \alpha\| x_{n_k} - Tx_{n_k} \| \|\nabla f(z)-\nabla f(Tx_{n_k}) \| + \| x_{n_k} - Tx_{n_k} \| \|\nabla f(Tx_{n_k}) - \nabla f(Tz) \|  \\
&\leq (1- \alpha )D_f (x_{n_k},Tx_{n_k})+(1- \alpha )D_f (x_{n_k},z)+ \alpha D_f (x_{n_k},T_z)\\
&\quad+ 2 \alpha M_3 \| x_{n_k} - Tx_{n_k} \| + 2M_3 \| x_{n_k} - Tx_{n_k} \| \\
&\leq (1- \alpha )D_f (x_{n_k},Tx_{n_k})+ D_f (x_{n_k},z)\\
&\quad+ 2 \alpha M_3 \| x_{n_k} - Tx_{n_k} \| + 2M_3 \| x_{n_k} - Tx_{n_k} \| \\
\endaligned $ \vskip 2mm
for all $k \in \mathbb{N}.$
It follows $ \displaystyle \lim_{k\to \infty} \|x_{n_k}-Tz \|=0. $ Thus we have $Tz = z.$ In view of Lemmas \ref{Bregman alpha Ishi} and
\ref{Naraghirad02}, we conclude that $ \displaystyle \lim_{n\to \infty} \|x_{n}- z \|=0. $ Therefore, $z$ is the strong limit of the sequence $\{ x_n \}_{n\in \mathbb{N} }.$
\end{proof}

\section{Bregman Noor's type iteration for Bregman generalized $\alpha $-nonexpansive mappings }

We propose the following Bregman Noor's type iteration. Let E be a reflexive Banach space and let $C$ be a nonempty, closed and convex subset of $E.$ let $f:E\to \mathbb{R}$ be a strictly convex and \emph{G$\hat{a}$teaux differentiable function.} Let $T:C\to C $  be a Bregman generalized $ \alpha$-nonexpansive mapping such that the fixed point set $F(T)$ is nonempty. Let $\{ x_n \}_{n\in \mathbb{N} },$ $\{ y_n\}_{n\in \mathbb{N} }$ and $\{ z_n\}_{n\in \mathbb{N} }$ be three sequences defined by

		\begin{equation} \label{Bregman Noor}
\left \{\begin{array}{l} 

z_n = \alpha_n \nabla f (Tx_n) + (1 - \alpha_n ) \nabla f (x_n),\\
y_n = \nabla f^{*} [\beta_n \nabla f (Tz_n) + ( 1 - \beta_n )\nabla f (x_n)],\\
x_{n+1}  = proj_C^f \big( \nabla f^{*} [\gamma_n \nabla f (Ty_n) + ( 1 - \gamma_n) \nabla f (x_n) ] \big), 
\end{array}\right.
		\end{equation}
		
where $\{ \alpha_n \}_{n\in \mathbb{N} }$, $\{ \beta_n\}_{n\in \mathbb{N} }$ and $\{ \gamma_n\}_{n\in \mathbb{N} }$ are arbitrary sequences in $[0,1).$

\begin{lem} \label{Bregman Noor01}
Let $f:E\to \mathbb{R}$ be a strongly coercive Bregman function. Let $C$ be a nonempty, closed and convex subset of a reflexive Banach space $E.$ Let $T:C\to C $ be a Bregman quasi-nonexpansive mapping. Let $\{x_n\}_{n\in \mathbb{N} }$, $\{y_n\}_{n\in \mathbb{N} }$ and $\{z_n\}_{n\in \mathbb{N} }$ be three sequences defined by \eqref{Bregman Noor} such that $\{\alpha_n\}_{n\in \mathbb{N} }$ , $\{\beta_n\}_{n\in \mathbb{N} }$ and $\{\gamma_n\}_{n\in \mathbb{N} }$ are arbitrary sequences in $[0,1).$ Then the following assertions hold:\\

(1)   $ max \{ D_f(w,x_{n+1}), D_f (w,y_n), D_f (w,z_n)\} \leq D_f (w,x_n) $ for all $ w $ in $ F(T)$ and $ n=1,2,...$.\vskip 2mm
(2)   $ \displaystyle \lim_{n\to \infty} D_f(w,x_n)$ exists for any $w$ in $F(T).$ \vskip 2mm 
\end{lem}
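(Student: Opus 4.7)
The plan is to establish the three one-step inequalities
\[
D_f(w,z_n) \leq D_f(w,x_n),\qquad D_f(w,y_n) \leq D_f(w,x_n),\qquad D_f(w,x_{n+1}) \leq D_f(w,x_n),
\]
in that order, and then deduce (2) from monotonicity and positivity. Throughout, $w \in F(T) \subseteq C$, so by Bregman quasi-nonexpansiveness we may freely replace $D_f(w,Tu)$ by an upper bound $D_f(w,u)$ for any $u\in C$.

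The key tool is the function $V(x,x^{*})=f(x)-\langle x,x^{*}\rangle+f^{*}(x^{*})$ studied in Lemma \ref{Butnariu02}. Since $f^{*}$ is convex (being a Fenchel conjugate), $V(w,\cdot)$ is convex on $E^{*}$, and part (1) of that lemma gives the identity $D_f\bigl(w,\nabla f^{*}(x^{*})\bigr)=V(w,x^{*})$. Applying this to $x^{*}=\alpha_n\nabla f(Tx_n)+(1-\alpha_n)\nabla f(x_n)$ and using convexity in the second slot, then invoking quasi-nonexpansiveness of $T$, yields
\[
D_f(w,z_n) \leq \alpha_n D_f(w,Tx_n) + (1-\alpha_n) D_f(w,x_n) \leq D_f(w,x_n).
\]
The same $V$-convexity argument applied to $y_n$ and to the pre-projection point $u_{n+1}:=\nabla f^{*}\bigl(\gamma_n\nabla f(Ty_n)+(1-\gamma_n)\nabla f(x_n)\bigr)$ gives
\[
D_f(w,y_n) \leq \beta_n D_f(w,Tz_n) + (1-\beta_n) D_f(w,x_n) \leq \beta_n D_f(w,z_n) + (1-\beta_n) D_f(w,x_n)
\]
and
\[
D_f(w,u_{n+1}) \leq \gamma_n D_f(w,Ty_n) + (1-\gamma_n) D_f(w,x_n) \leq \gamma_n D_f(w,y_n) + (1-\gamma_n) D_f(w,x_n).
\]
Substituting the bound for $D_f(w,z_n)$ into the bound for $D_f(w,y_n)$ gives $D_f(w,y_n)\leq D_f(w,x_n)$, and then substituting into the bound for $D_f(w,u_{n+1})$ gives $D_f(w,u_{n+1})\leq D_f(w,x_n)$.

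To pass from $u_{n+1}$ to $x_{n+1}=\mathrm{proj}_C^f(u_{n+1})$, I will invoke the Bregman projection inequality \eqref{Bregman projection02}, which discards the nonnegative term $D_f(x_{n+1},u_{n+1})$ to yield $D_f(w,x_{n+1})\leq D_f(w,u_{n+1})\leq D_f(w,x_n)$; note that this step is what requires $w\in C$. Chaining the three inequalities proves (1). For (2), the sequence $\{D_f(w,x_n)\}$ is nonnegative and nonincreasing by (1), so it converges in $[0,\infty)$.

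There is essentially no real obstacle here beyond bookkeeping: the proof is a purely mechanical unpacking of $V$-convexity, Bregman quasi-nonexpansiveness, and the projection inequality. The only subtlety worth flagging is that the convex-combination step lives in $E^{*}$, not in $E$, so one must systematically pass through the identity $D_f(w,\nabla f^{*}(\cdot))=V(w,\cdot)$ rather than try to distribute $D_f$ directly across the affine combinations defining $z_n$, $y_n$, and $u_{n+1}$.
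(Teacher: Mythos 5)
Your proposal is correct and follows essentially the same route as the paper: both pass through the identity $D_f\bigl(w,\nabla f^{*}(x^{*})\bigr)=V(w,x^{*})$ of Lemma \ref{Butnariu02}, use convexity of $V$ in the second variable together with Bregman quasi-nonexpansiveness to get the three one-step inequalities, handle the projection via \eqref{Bregman projection02}, and conclude (2) from nonnegativity and monotonicity. Your remark that the affine combinations live in $E^{*}$ (so one cannot distribute $D_f$ directly) is exactly the mechanism the paper uses, and it also quietly repairs the missing $\nabla f^{*}$ in the displayed definition of $z_n$ in \eqref{Bregman Noor}.
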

\begin{proof}
 Let $w$ in $F(T).$ In view of Lemma \ref{Butnariu02} and \eqref{Bregman Noor}, we conclude that
 \begin{center}
$\aligned
 D_f (w,z_n) &= D_f \big(w,\alpha_n \nabla f (Tx_n) + (1-\alpha_n )\nabla f(x_n)\big) \\
&= V\big(w,\alpha_n \nabla f (Tx_n) + (1-\alpha_n )\nabla f(x_n)\big)\\
&\leq \alpha_n V\big(w, \nabla f (Tx_n)\big) + (1-\alpha_n )V \big(w,\nabla f(x_n)\big)\\
&= \alpha_n D_f \big(w,Tx_n\big) + (1-\alpha_n )D_f \big(w,x_n\big)\\ 
&\leq \alpha_n D_f \big(w,x_n\big) + (1-\alpha_n )D_f \big(w,x_n\big)\\ 
&= D_f \big(w,x_n\big). \\
 \endaligned $
\end{center}
Also,
 \begin{center}
$\aligned
 D_f (w,y_n) &= D_f \big(w,\nabla f^{*} [\beta_n \nabla f (Tz_n) + (1-\beta_n )\nabla f(x_n)\big) \\
&= V\big(w,\beta_n \nabla f (Tz_n) + (1-\beta_n )\nabla f(x_n)\big)\\
&\leq \beta_n V\big(w, \nabla f (Tz_n)\big) + (1-\beta_n )V \big(w,\nabla f(x_n)\big)\\
&= \beta_n D_f \big(w,Tz_n\big) + (1-\beta_n )D_f \big(w,x_n\big)\\ 
&\leq \beta_n D_f \big(w,z_n\big) + (1-\beta_n )D_f \big(w,x_n\big)\\
&= \beta_n D_f \big(w,x_n\big) + (1-\beta_n )D_f \big(w,x_n\big)\\ 
&= D_f \big(w,x_n\big). \\
 \endaligned $
\end{center}
Consequently, using \eqref{Bregman projection02} we have
 \begin{center}
$\aligned
D_f (w,x_{n+1}) &= D_f \big(w,proj_C^f \big(\nabla f^{*} [\gamma_n \nabla f (Ty_n) + ( 1 - \gamma_n) \nabla f (x_n) ])\big) \\
&\leq D_f \big(w,\nabla f^{*} [\gamma_n \nabla f (Ty_n) + ( 1 - \gamma_n) \nabla f (x_n) ]\big) \\
&= V\big(w,\gamma_n \nabla f (Ty_n) + ( 1 - \gamma_n) \nabla f (x_n)\big)\\
&\leq \gamma_n V\big(w, \nabla f (Ty_n)\big) + (1-\gamma_n )V \big(w,\nabla f(x_n)\big)\\
&= \gamma_n D_f \big(w,Ty_n\big) + (1-\gamma_n )D_f \big(w,x_n\big)\\ 
&\leq \gamma_n D_f \big(w,y_n\big) + (1-\gamma_n )D_f \big(w,x_n\big)\\
&= \gamma_n D_f \big(w,x_n\big) + (1-\gamma_n )D_f \big(w,x_n\big)\\ 
&= D_f \big(w,x_n\big). \\
 \endaligned $
\end{center}
This implies that $\{D_f (w,x_n)\}_{n\in \mathbb{N} }$  is a bounded and nonincreasing sequence for all $w$ in $F(T).$ Thus we have $ \displaystyle \lim_{n\to \infty} D_f(w,x_n)$  exists for any $w$ in $F(T).$
\end{proof}

\begin{thm} \label{Bregman Noor02}
Let $f:E\to \mathbb{R}$ be a strongly coercive Bregman function which is bounded on bounded sets, locally uniformly convex and locally uniformly smooth on $E.$ Let $C$ be a nonempty, closed and convex subset of a reflexive Banach space $E.$ Let $T:C\to C $ a Bregman generalized $ \alpha$-nonexpansive mapping. Let $\{\alpha_n\}_{n\in \mathbb{N} },$ $\{\beta_n\}_{n\in \mathbb{N} }$ and $\{\gamma_n\}_{n\in \mathbb{N} }$  be sequences in $[0,1)$ satisfying the control condition:
\begin{equation} \label{condition}
\displaystyle \sum_{n=1}^\infty \gamma_n \beta_n \alpha_n (1-\alpha_n) = +\infty.
\end{equation}

Let $\{ x_n \}_{n\in \mathbb{N} }$ be a sequence generated by the algorithm \eqref{Bregman Noor}. Then the following are equivalent.  \\

(1)  There exists a bounded sequence $\{ x_n \}_{n\in \mathbb{N} } \subset C $ such that  $ \displaystyle \liminf_{n\to \infty} \|Tx_{n}-x_{n} \|=0. $ \\

(2)  The fixed point set $F(T)\neq \emptyset .$
\end{thm}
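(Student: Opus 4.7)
The plan is to prove the equivalence by treating the two directions separately; the substantive content sits in $(2) \Rightarrow (1)$, which relies on uniform convexity of $f^{*}$ together with the divergence hypothesis \eqref{condition}.

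\textbf{For $(2) \Rightarrow (1)$.} I would first fix $w \in F(T)$ and verify that Lemma \ref{Bregman Noor01} applies: setting $x = w$ in the Bregman generalized $\alpha$-nonexpansive inequality collapses it to $D_f(w, Tu) \leq D_f(w, u)$ for every $u \in C$, so $T$ is automatically Bregman quasi-nonexpansive on $F(T)$. Hence $\{D_f(w, x_n)\}$ is nonincreasing and, by the level-boundedness in Lemma \ref{Zalinescu}(3), $\{x_n\}$ is bounded. The main ingredient is a quantitative refinement: because $f$ is strongly coercive and locally uniformly smooth, Proposition \ref{Zalinescu02} yields that $f^{*}$ is uniformly convex on bounded subsets of $E^{*}$ with some modulus, say $\rho^{*}_r$. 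Combined with the identity $V(w, \nabla f(\cdot)) = D_f(w, \cdot)$ from Lemma \ref{Butnariu02}, this produces the strengthened three-point estimate
\begin{equation*}
V\bigl(w, s\, u^{*} + (1-s)\, v^{*}\bigr) \leq s\, V(w, u^{*}) + (1-s)\, V(w, v^{*}) - s(1-s)\, \rho^{*}_r\bigl(\|u^{*} - v^{*}\|\bigr)
\end{equation*}
valid whenever $u^{*}, v^{*}$ lie in a common bounded subset of $E^{*}$.

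Applying this to each of the three convex combinations in \eqref{Bregman Noor} in turn --- using \eqref{Bregman projection02} to absorb the Bregman projection in the last line, and inserting $D_f(w, T\cdot) \leq D_f(w, \cdot)$ at every level --- I expect a telescoped descent of the form
\begin{equation*}
D_f(w, x_{n+1}) \leq D_f(w, x_n) - \gamma_n \beta_n \alpha_n (1-\alpha_n)\, \rho^{*}_r\bigl(\|\nabla f(Tx_n) - \nabla f(x_n)\|\bigr).
\end{equation*}
Summing over $n$ and invoking \eqref{condition} forces $\liminf_n \rho^{*}_r(\|\nabla f(Tx_n) - \nabla f(x_n)\|) = 0$; since $\rho^{*}_r(t) > 0$ for $t > 0$, this gives $\liminf_n \|\nabla f(Tx_n) - \nabla f(x_n)\| = 0$, and the uniform norm-to-norm continuity of $\nabla f^{*}$ on bounded subsets (Proposition \ref{Zalinescu01}) finally transfers this to $\liminf_n \|Tx_n - x_n\| = 0$, which is (1).

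\textbf{For $(1) \Rightarrow (2)$.} Given $\{x_n\}$ bounded with $\liminf_n \|Tx_n - x_n\| = 0$, I extract a subsequence $\{x_{n_k}\}$ with $\|Tx_{n_k} - x_{n_k}\| \to 0$; reflexivity of $E$ together with weak closedness of the closed convex set $C$ yields a further subsequence weakly convergent to some $z \in C$, and the demiclosedness principle (Proposition \ref{Bregman alpha02}) delivers $Tz = z$. \textbf{Main obstacle.} The delicate step is running the chain of uniform-convexity estimates through the three nested levels of \eqref{Bregman Noor} with a \emph{single} modulus $\rho^{*}_r$. This requires knowing a priori that $\{\nabla f(x_n)\}$, $\{\nabla f(Tx_n)\}$, $\{\nabla f(Tz_n)\}$, $\{\nabla f(Ty_n)\}$ all remain in a common bounded subset of $E^{*}$, which I expect to establish inductively from boundedness of $\{x_n\}$, the Bregman quasi-nonexpansiveness that keeps the $T$-iterates bounded, and norm-to-norm continuity of $\nabla f$ on bounded sets; the bookkeeping across the three substeps is where care is required.
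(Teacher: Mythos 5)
Your proposal is correct and follows essentially the same route as the paper's proof: quasi-nonexpansiveness of $T$ obtained by setting $x=w$ in \eqref{Bregman alpha}, the monotone descent of Lemma \ref{Bregman Noor01}, the gauge $\rho^{*}_{s}$ of uniform convexity of $f^{*}$ from Proposition \ref{Zalinescu02} applied at the $z_n$-level to produce the decrement $\gamma_n\beta_n\alpha_n(1-\alpha_n)\rho^{*}_{s}\big(\|\nabla f(x_n)-\nabla f(Tx_n)\|\big)$, summation against \eqref{condition}, and transfer back through the uniform norm-to-norm continuity of $\nabla f^{*}$, while your converse via a weakly convergent subsequence and the demiclosedness principle (Proposition \ref{Bregman alpha02}) is the same argument the paper imports from the first part of Theorem \ref{Bregman alpha Ishi02}, since Proposition \ref{singleton} reduces the Bregman asymptotic center there to the weak subsequential limit anyway. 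One small correction: the boundedness of $\{x_n\}$ follows from condition (2) of Definition \ref{Kohsaka01} (level sets of $D_f(w,\cdot)$ in the \emph{second} argument are bounded), not from Lemma \ref{Zalinescu}(3), which bounds level sets in the first argument; also note that the uniform-convexity modulus is only needed at the $z_n$-level, so the common bound $s=\sup\{\|\nabla f(x_n)\|,\|\nabla f(Tx_n)\|\}$ suffices and your worry about $\nabla f(Tz_n)$, $\nabla f(Ty_n)$ can be discharged with plain convexity of $V$ in the second variable (Lemma \ref{Butnariu02}).
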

\begin{proof}
 The implication $(1)\Longrightarrow(2)$ follows similarly as in the first part of the proof of Theorem \ref{Bregman alpha Ishi02}. For the implication $(2)\Longrightarrow (1)$, we assume $F(T)\neq \emptyset $. The boundedness of the sequences $\{ x_n \}_{n\in \mathbb{N} }$, $\{ y_n \}_{n\in \mathbb{N} }$ and $\{ z_n \}_{n\in \mathbb{N} }$ follows from Lemma \ref{Bregman Noor01} and Definition \ref{Kohsaka01}. Since T is a Bregman quasi-nonexpansive mapping, for any $q$ in $F(T)$ we have
\begin{center}
$  D_f (q,Tx_n) \leq D_f (q, x_n), \,\,\,\,\,\,\ \forall n \in \mathbb{N}.$
\end{center}
This, together with Definition \ref{Kohsaka01} and the boundedness of $\{ x_n \}_{n\in \mathbb{N} },$ implies that $\{ Tx_n \}_{n\in \mathbb{N} }$ is bounded.

The function $f$ is bounded on bounded subsets of $E$ and therefore $\nabla f $ is also bounded on bounded subsets of $E^{*}$ (see, for example, [\cite{Butnariu01}, Proposition 1.1.11] for more details). This implies the sequences $\{ \nabla f (x_n) \}_{n\in \mathbb{N} },$ $\{ \nabla f (y_n) \}_{n\in \mathbb{N} },$ $\{ \nabla f (z_n) \}_{n\in \mathbb{N} },$ $\{ \nabla f (Tz_n) \}_{n\in \mathbb{N} } $, $\{ \nabla f (Ty_n) \}_{n\in \mathbb{N} }$ and $\{ \nabla f (Tx_n) \}_{n\in \mathbb{N} }$ are bounded in $E^{*}.$

In view of Proposition \ref{Zalinescu02}, we have that $ dom \,\,f^{*} = E^{*} $ and $ f^{*} $ is strongly coercive and uniformly convex on bounded subsets of $E^{*}.$ Let $ s_2 = sup \{\| \nabla f (x_n) \|, \| \nabla f (Tx_n) \| : n \in \mathbb{N}\} < \infty $ and let $ \rho_{s_2}^{*} : E^{*}\rightarrow \mathbb{R} $ be the gauge of uniform convexity of the (Fenchel) conjugate function $ f^{*} .$\vskip 2mm
\textbf{Claim.} For any $p \in F(T)$ and $n \in \mathbb{N},$
\begin{equation} \label{function star}
 D_f (p,z_n)\leq D_f (p,x_n) - \alpha_n (1 - \alpha_n )\rho_{s_2}^{*} ( \| \nabla f (x_n) - \nabla f (Tx_n ) \| ).
\end{equation}

Let $p \in F(T).$ For each $n \in \mathbb{N},$ it follows from the definition of Bregman distance (\ref{Bregman}), Lemma \ref{Butnariu02}, (\ref{locally}) and (\ref{Bregman Noor}) that \vskip 2mm
$\aligned
D_f (p,z_n) &= f(p) -f(z_n) - \langle p - z_n , \nabla f (z_n ) \rangle \\
&= f(p) + f^{*} \big(\nabla f (z_n)\big) - \langle z_n , \nabla f (z_n)\rangle - \langle p - z_n , \nabla f (z_n ) \rangle \\
&= f(p) + f^{*} \big(\nabla f (z_n)\big) - \langle z_n , \nabla f (z_n)\rangle - \langle p, \nabla f (z_n)\rangle + \langle z_n , \nabla f (z_n ) \rangle \\
&= f(p) + f^{*} \big( (1-\alpha_n )\nabla f (x_n) + \alpha_n \nabla f (Tx_n)\big) - \langle p, ((1-\alpha_n)\nabla f (x_n)+ \alpha_n \nabla f (Tx_n ) \rangle \\
&\leq (1-\alpha_n)f(p) + \alpha_n f(p) + (1-\alpha_n) f^{*}(\nabla f (x_n) + \alpha_n f^{*} (\nabla f (Tx_n))\\
&\quad - \alpha_n (1- \alpha_n ) \rho_{s_2}^{*}( \| \nabla f (x_n) - \nabla f (Tx_n) \| )-(1-\alpha_n) \langle p, \nabla f (x_n)\rangle - \alpha_n \langle p,\nabla f (Tx_n)\rangle \\
&= (1-\alpha_n) [f(p) + f^{*}(\nabla f (x_n))- \langle p, \nabla f (x_n)\rangle ]\\
&\quad + \alpha_n [ f(p)+ f^{*}(\nabla f (Tx_n))-\langle p, \nabla f (Tx_n)\rangle ] - \alpha_n (1- \alpha_n ) \rho_{s_2}^{*}( \| \nabla f (x_n) - \nabla f (Tx_n) \| )\\
&= (1-\alpha_n) [f(p) - f (x_n)+\langle x_n ,\nabla f (x_n)\rangle - \langle p, \nabla f (x_n)\rangle ]\\
&\quad + \alpha_n [ f(p)- f (Tx_n) +\langle Tx_n , \nabla f (Tx_n)\rangle -\langle p, \nabla f (Tx_n)\rangle ]\\
&\quad - \alpha_n (1- \alpha_n ) \rho_{s_2}^{*}( \| \nabla f (x_n) - \nabla f (Tx_n) \| )\\
&= (1-\alpha_n) D_f ( p,x_n ) + \alpha_n D_f (p,Tx_n) - \alpha_n (1-\alpha_n)\rho_{s_2}^{*}( \| \nabla f (x_n) - \nabla f (Tx_n) \| )\\
&\leq (1-\alpha_n) D_f ( p,x_n ) + \alpha_n D_f (p,x_n) - \alpha_n (1-\alpha_n)\rho_{s_2}^{*}( \| \nabla f (x_n) - \nabla f (Tx_n) \| )\\
&= D_f ( p,x_n ) - \alpha_n (1-\alpha_n)\rho_{s_2}^{*}( \| \nabla f (x_n) - \nabla f (Tx_n) \| ).\\
\endaligned $\vskip 2mm
In view of Lemma \ref{Butnariu02} and (\ref{function star}), we obtain
$$\aligned
 D_f (p,y_n) &= D_f \big(p,\beta_n \nabla f (Tz_n) + (1-\beta_n )\nabla f(x_n)\big) \\
&= V\big(p,\beta_n \nabla f (Tz_n) + (1-\beta_n )\nabla f(x_n)\big)\\
&\leq \beta_n V\big(p, \nabla f (Tz_n)\big) + (1-\beta_n )V \big(p,\nabla f(x_n)\big)\\
&= \beta_n D_f \big(p,Tz_n\big) + (1-\beta_n )D_f \big(p,x_n\big)\\ 
&\leq \beta_n D_f \big(p,z_n\big) + (1-\beta_n )D_f \big(p,x_n\big)\\
&= \beta_n D_f \big(p,x_n\big) - \beta_n \alpha_n (1-\alpha_n)\rho_{s_2}^{*}( \| \nabla f (x_n) - \nabla f (Tx_n) \| ).\\ 
 \endaligned $$ \vskip 2mm
In view of \ref{Butnariu02} and (\ref{function star}), we obtain
$$\aligned
D_f (p,x_{n+1}) &= D_f \big(p,\nabla f^{*} [\gamma_n \nabla f (Ty_n) + ( 1 - \gamma_n) \nabla f (x_n) ])\big) \\
&= V\big(p,\gamma_n \nabla f (Ty_n) + ( 1 - \gamma_n) \nabla f (x_n)\big)\\
&\leq \gamma_n V\big(p, \nabla f (Ty_n)\big) + (1-\gamma_n )V \big(p,\nabla f(x_n)\big)\\
&= \gamma_n D_f \big(p,Ty_n\big) + (1-\gamma_n )D_f \big(p,x_n\big)\\ 
&\leq \gamma_n D_f \big(p,y_n\big) + (1-\gamma_n )D_f \big(p,x_n\big)\\
&= \gamma_n D_f \big(p,x_n\big) - \gamma_n \alpha_n \beta_n (1-\alpha_n)\rho_{s_2}^{*}( \| \nabla f (x_n) - \nabla f (Tx_n) \| )+ (1-\gamma_n )D_f \big(p,x_n\big) \\ 
&\leq D_f \big(p,x_n\big)- \gamma_n \alpha_n \beta_n (1-\alpha_n)\rho_{s_2}^{*}( \| \nabla f (x_n) - \nabla f (Tx_n) \| ).  \\
 \endaligned $$ 
Thus we have
\begin{equation} 
\gamma_n \alpha_n \beta_n (1-\alpha_n)\rho_{s_2}^{*}( \| \nabla f (x_n) - \nabla f (Tx_n) \| )\leq D_f (p,x_n)- D_f (p,x_{n+1})
\end{equation}
Since $\{D_f (x_n,z)\}_{n\in \mathbb{N} }$ converges, together with the control condition (\ref{condition}), we have
$$ \displaystyle \lim_{n\to \infty} \| \nabla f (x_n)- \nabla f (Tx_n) \| =0 . $$
Since $ \nabla f^{*} $ is uniformly norm-to-norm continuous on bounded subsets of $E^{*}$ (see, for example,\cite{Zalinescu}), \\ 
we arrive at
\begin{equation}
\displaystyle \liminf_{n\to \infty} \| x_n- Tx_n \|=0.
\end{equation}
\end{proof}

\begin{thm} \label{Bregman Noor03}
Let $f:E\to \mathbb{R}$ be a strongly coercive Bregman function which is bounded on bounded sets, locally uniformly convex and locally uniformly smooth on $E.$ Let $C$ be a nonempty, closed and convex subset of a reflexive Banach space $E.$ Let $T:C\to C $ a Bregman generalized $ \alpha$-nonexpansive mapping with $F(T)\neq \emptyset.$ Let $\{\alpha_n\}_{n\in \mathbb{N} },$ $\{\beta_n\}_{n\in \mathbb{N} }$ and $\{\gamma_n\}_{n\in \mathbb{N} }$  be three sequences in $[0,1)$ satisfying the control conditions $ \displaystyle \Sigma_{n=1}^\infty \gamma_n \beta_n \alpha_n (1-\alpha_n) = +\infty. $ Let $\{x_n\}_{n\in \mathbb{N} }$ be a  generated by the algorithm \eqref{Bregman Noor}. Then, there exists a subsequence $\{x_{n_i}\}_{i \in \mathbb{N} }$ of $\{x_n\}_{n\in \mathbb{N} }$ which converges weakly to a fixed point of $T$ as $ i\rightarrow \infty. $
\end{thm}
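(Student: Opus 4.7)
The plan is to assemble the weak cluster point from three prior ingredients: boundedness (Lemma \ref{Bregman Noor01}), an asymptotic regularity statement in the form $\liminf_{n\to\infty}\|x_n - Tx_n\|=0$ (Theorem \ref{Bregman Noor02}), and the demiclosedness principle (Proposition \ref{Bregman alpha02}). Because $F(T)\neq\emptyset$, Lemma \ref{Bregman Noor01} gives $D_f(w,x_n)\le D_f(w,x_1)$ for every $w\in F(T)$ and every $n$, and since $f$ is a Bregman function (in the sense of Definition \ref{Kohsaka01}), its right sublevel sets $\{y\in E: D_f(w,y)\le r\}$ are bounded. Hence $\{x_n\}$ is a bounded sequence in $C$.

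Next I would invoke Theorem \ref{Bregman Noor02} in the direction $(2)\Rightarrow(1)$: the hypothesis $F(T)\neq\emptyset$ together with the summability condition $\sum \gamma_n\beta_n\alpha_n(1-\alpha_n)=+\infty$ yields $\liminf_{n\to\infty}\|Tx_n-x_n\|=0$. Therefore there is a subsequence $\{x_{n_k}\}$ such that $\lim_{k\to\infty}\|Tx_{n_k}-x_{n_k}\|=0$. Because $E$ is reflexive and $\{x_{n_k}\}$ is bounded, I pass to a further subsequence $\{x_{n_i}\}$ that converges weakly in $E$ to some point $p$; since $C$ is closed and convex (hence weakly closed), $p\in C$.

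Finally, the demiclosedness principle for Bregman generalized $\alpha$-nonexpansive mappings, Proposition \ref{Bregman alpha02}, applies to $\{x_{n_i}\}$ because $x_{n_i}\rightharpoonup p$ in $C$ and $\|Tx_{n_i}-x_{n_i}\|\to 0$; it concludes $Tp=p$, i.e.\ $p\in F(T)$. This gives a subsequence of $\{x_n\}$ converging weakly to a fixed point of $T$, which is exactly the assertion of the theorem.

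The main obstacle I anticipate is essentially bookkeeping rather than a deep step: we only obtain the regularity property in the $\liminf$ form from Theorem \ref{Bregman Noor02}, so one has to be careful to extract the subsequence along which $\|Tx_{n_k}-x_{n_k}\|\to 0$ \emph{before} extracting the weakly convergent subsubsequence, so that the demiclosedness hypothesis is available on the final index sequence. Once that ordering of extractions is arranged, the application of Proposition \ref{Bregman alpha02} is immediate, and no further use of the Bregman-Opial property or of the explicit three-step structure of \eqref{Bregman Noor} is needed beyond what is already absorbed into Lemma \ref{Bregman Noor01} and Theorem \ref{Bregman Noor02}.
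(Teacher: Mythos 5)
Your proposal is correct and follows essentially the same route as the paper's own proof: Theorem \ref{Bregman Noor02} (resting on Lemma \ref{Bregman Noor01}) supplies boundedness and $\liminf_{n\to\infty}\|Tx_n-x_n\|=0$, reflexivity of $E$ yields a weakly convergent subsequence with limit in the weakly closed set $C$, and the demiclosedness principle of Proposition \ref{Bregman alpha02} identifies that weak limit as a fixed point. Your explicit care in first extracting the subsequence along which $\|Tx_{n_k}-x_{n_k}\|\to 0$ and only then passing to a weakly convergent sub-subsequence is in fact slightly more careful than the paper's write-up, which applies Proposition \ref{Bregman alpha02} to a weakly convergent subsequence without noting that the demiclosedness hypothesis requires the full limit, not merely the liminf, along that subsequence.
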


\begin{proof}
It follows from Theorem \ref{Bregman Noor02} that $\{x_n\}_{n\in \mathbb{N} }$ is bounded and $ \displaystyle \liminf_{n\to \infty} \| Tx_n-x_n \|=0.$ Since $E$ is reflexive, then there exists a subsequence $\{x_{n_i}\}_{i \in \mathbb{N} }$ of $\{x_n\}_{n\in \mathbb{N} }$ such that $ x_{n_i} \rightharpoonup p \in C $ as $ i \rightarrow \infty .$ \\ In view of Proposition \ref{Bregman alpha02}, we conclude that $ p \in F(T) $ and the desired conclusion follows.
\end{proof}

The construction of fixed points of nonexpansive mappings via Halpern's algorithm  \cite{Halpern} has been extensively investigated recently in the current literature (see, for example, \cite{Reich01} and the references therein). Numerous results have been proved on Halpern's iterations for
nonexpansive mappings in Hilbert and Banach spaces (see, e.g.,  \cite{Nilsrakoo,Suzuki,Takahashi02}).

Before dealing with the strong convergence of a Halpern-type iterative algorithm, we need the following lemmas \ref{Mainge} and \ref{Xu}.

\begin{thm} \label{Bregman Noor04}
Let $f:E\to \mathbb{R}$ be a strongly coercive Bregman function which is bounded on bounded sets, locally uniformly convex and locally uniformly smooth on $E.$ Let $C$ be a nonempty, closed and convex subset of a reflexive Banach space $E.$ Let $T:C\to C $ a Bregman generalized $ \alpha$-nonexpansive mapping with $F(T)\neq \emptyset .$ Let $\{\alpha_n\}_{n\in \mathbb{N} },$ $\{\beta_n\}_{n\in \mathbb{N} }$ and $\{\gamma_n\}_{n\in \mathbb{N} }$  be three sequences in $[0,1)$ satisfying the control conditions : \vskip 2mm
(a) $ \displaystyle \lim_{n\to \infty} \gamma_n = 0. $ \vskip 2mm
(b) $ \displaystyle \Sigma_{n=1}^\infty \gamma_n = +\infty; $ \vskip 2mm
(c) $ 0 < \displaystyle \liminf_{n\to \infty} \beta_n \leq \displaystyle \limsup_{n\to \infty} \beta_n < 1. $ \vskip 2mm 

Let $\{x_n\}_{n\in \mathbb{N} }$ be a sequence generated by 
		\begin{equation} \label{Bregman Halpern}
\left \{\begin{array}{l} 

u \in C, x_1 \in C \text{chosen arbitrarily},\\
z_n = \alpha_n \nabla f (x_n) + (1 - \alpha_n ) \nabla f (Tx_n),\\
y_n = \nabla f^{*} [\beta_n \nabla f (x_n) + ( 1 - \beta_n )\nabla f (z_n)],\\
x_{n+1}  = proj_C^f \big( \nabla f^{*} [\gamma_n \nabla f (u) + ( 1 - \gamma_n) \nabla f (y_n) ] \big),\,\,\ \text{for} \,\,\
n \in \mathbb{N}. 
\end{array}\right.
		\end{equation}
Then the sequence $\{x_{n_i}\}_{i \in \mathbb{N} }$ defined in \eqref{Bregman Halpern} converges strongly to $ proj_{F(T)}^f u $ as $ n \rightarrow \infty .$ 
\end{thm}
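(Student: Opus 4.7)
The plan is to set $q := \operatorname{proj}_{F(T)}^f u$ and prove $D_f(q,x_n)\to 0$, from which strong convergence $x_n\to q$ follows via Lemma \ref{Naraghirad02}. The overall engine is Lemma \ref{Xu} applied to $s_n := D_f(q,x_n)$, with Lemma \ref{Mainge} handling the non-monotone case if $\{s_n\}$ fails to be eventually decreasing.

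First I would establish the \emph{boundedness} of $\{x_n\}$, $\{y_n\}$, $\{z_n\}$. Using Lemma \ref{Butnariu02}(1) to rewrite each Bregman distance via $V$, exploiting convexity of $V(q,\cdot)$ in its second argument together with Bregman quasi-nonexpansiveness of $T$ (which is implied by the Bregman generalized $\alpha$-nonexpansive property at $y=q$) and the contractive property \eqref{Bregman projection02} of the Bregman projection, one obtains $D_f(q,x_{n+1})\le \gamma_n D_f(q,u)+(1-\gamma_n)D_f(q,x_n)\le\max\{D_f(q,u),D_f(q,x_1)\}$, and boundedness follows from Definition \ref{Kohsaka01}(2). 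By Proposition \ref{Zalinescu02} together with boundedness of $\nabla f$ on bounded sets, the gradient sequences $\{\nabla f(x_n)\}$, $\{\nabla f(y_n)\}$, $\{\nabla f(z_n)\}$ and their images under $T$ are bounded in $E^*$.

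Next, I would derive the \emph{key recursion}. Applying Lemma \ref{Butnariu02}(2) with $x^*=\gamma_n\nabla f(u)+(1-\gamma_n)\nabla f(y_n)$ and $y^*=-\gamma_n(\nabla f(u)-\nabla f(q))$, then using the convexity of $V$, gives
$$D_f(q,x_{n+1})\le (1-\gamma_n)D_f(q,y_n)+\gamma_n\langle w_n-q,\nabla f(u)-\nabla f(q)\rangle,$$
where $w_n:=\nabla f^{*}[\gamma_n\nabla f(u)+(1-\gamma_n)\nabla f(y_n)]$. Since $f^*$ is uniformly convex on bounded subsets of $E^*$ (Proposition \ref{Zalinescu02}), the computations leading to \eqref{function star} in the proof of Theorem \ref{Bregman Noor02} extend, allowing me to replace $D_f(q,y_n)$ by $D_f(q,x_n)-\beta_n(1-\beta_n)\rho^{*}_{r}(\|\nabla f(x_n)-\nabla f(z_n)\|)$. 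This produces exactly the shape $s_{n+1}\le(1-\gamma_n)s_n+\gamma_n\delta_n$ required by Lemma \ref{Xu}, with the additional negative correction needed to extract asymptotic regularity.

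The \emph{asymptotic regularity} step uses $\sum\gamma_n=\infty$ (condition (b)) and the fact that $\beta_n(1-\beta_n)$ is bounded away from zero (condition (c)): telescoping and boundedness of $s_n$ force $\rho^{*}_{r}(\|\nabla f(x_n)-\nabla f(z_n)\|)\to 0$ along a subsequence, hence $\|\nabla f(x_n)-\nabla f(z_n)\|\to 0$, and since $\nabla f(z_n)-\nabla f(x_n)=(1-\alpha_n)(\nabla f(Tx_n)-\nabla f(x_n))$, uniform norm-to-norm continuity of $\nabla f^{*}$ on bounded sets yields $\|x_n-Tx_n\|\to 0$ along a subsequence. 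The same machinery gives $\|y_n-x_n\|\to 0$ and $\|w_n-y_n\|\to 0$ because $\gamma_n\to 0$ (condition (a)) and the gradients are bounded.

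The \emph{main obstacle} is the limsup estimate $\limsup_n\langle w_n-q,\nabla f(u)-\nabla f(q)\rangle\le 0$. I would extract a subsequence realizing the $\limsup$, pass to a weakly convergent further subsequence $x_{n_k}\rightharpoonup p\in C$, invoke the demiclosedness principle (Proposition \ref{Bregman alpha02}) together with $\|x_{n_k}-Tx_{n_k}\|\to 0$ to conclude $p\in F(T)$, and transfer the weak convergence to $\{w_{n_k}\}$ using $\|w_n-x_n\|\to 0$. The Bregman projection characterization \eqref{Bregman projection} for $q=\operatorname{proj}_{F(T)}^f u$ then gives $\langle p-q,\nabla f(u)-\nabla f(q)\rangle\le 0$, as desired. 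Finally, Lemma \ref{Xu} (invoked directly if $\{s_n\}$ is eventually monotone, otherwise through the index selection provided by Lemma \ref{Mainge}) forces $s_n=D_f(q,x_n)\to 0$, and Lemma \ref{Naraghirad02} promotes this to $\|x_n-q\|\to 0$. The delicate part throughout is ensuring that the various auxiliary sequences $y_n$, $z_n$, $w_n$ share the asymptotic behavior of $x_n$ strongly enough that their weak cluster points coincide; this is where condition (c) does the main work.
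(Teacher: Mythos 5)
Your proposal reproduces the paper's proof almost step for step in architecture: boundedness by induction through the function $V$ and Lemma \ref{Butnariu02}, the key recursion \eqref{Bregman Halpern07} obtained from Lemma \ref{Butnariu02}(2) with the same perturbation $y^{*}=-\gamma_n(\nabla f(u)-\nabla f(w))$, the limsup estimate via demiclosedness (Proposition \ref{Bregman alpha02}) transferred to $\{w_n\}$ and closed with the projection characterization \eqref{Bregman projection}, and the monotone/non-monotone dichotomy handled by Lemma \ref{Xu} and Lemma \ref{Mainge}. The one genuine deviation is where you extract the uniform-convexity correction: you take it at the $y_n$-combination, obtaining the modulus $\beta_n(1-\beta_n)\rho^{*}_{r}(\|\nabla f(x_n)-\nabla f(z_n)\|)$, whereas the paper extracts it at the $z_n$-combination (its inequality \eqref{Bregman Halpern04}) and carries the factor $\alpha_n(1-\alpha_n)(1-\beta_n)$ into \eqref{Bregman Halpern06}. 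Your placement is better matched to the stated hypotheses: condition (c) bounds $\beta_n(1-\beta_n)$ away from zero, while nothing in (a)--(c) bounds $\alpha_n(1-\alpha_n)$ away from zero, so the paper's passage from \eqref{Bregman Halpern06} to $\rho^{*}_{s_3}(\|\nabla f(x_n)-\nabla f(Tx_n)\|)\to 0$, justified there by ``conditions (a) and (c),'' silently requires $0<\liminf_n\alpha_n\le\limsup_n\alpha_n<1$.

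That said, your argument does not escape the issue entirely, and the step where it resurfaces is too quick to be correct as written: from $\|\nabla f(x_n)-\nabla f(z_n)\|=(1-\alpha_n)\|\nabla f(x_n)-\nabla f(Tx_n)\|\to 0$ you conclude $\|x_n-Tx_n\|\to 0$, but this division by $(1-\alpha_n)$ requires $\limsup_n\alpha_n<1$, which is not among (a)--(c) (the $\alpha_n$ are only assumed to lie in $[0,1)$, so $\alpha_n\to 1$ is permitted). You should state this as an explicit extra hypothesis --- note it is strictly weaker than what the paper's own route needs, so your variant repairs half of the lacuna rather than introducing a new one. Two smaller points. First, your phrase ``along a subsequence'' for the asymptotic regularity is misaligned with the dichotomy: in the monotone case the telescoping plus $\gamma_n\to 0$ gives full-sequence convergence of the $\rho^{*}$-term, and you genuinely need that, since the subsequence realizing $\limsup_n\langle w_n-q,\nabla f(u)-\nabla f(q)\rangle$ must itself satisfy $\|x_{n_k}-Tx_{n_k}\|\to 0$ for the demiclosedness step; in the Maing\'{e} case regularity holds exactly along the selected indices $m_k$, which is where the limsup is then taken, as in the paper's Case 2. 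Second, Bregman quasi-nonexpansiveness follows from the defining inequality \eqref{Bregman alpha} by setting $x=q$ (setting $y=q$ gives the skew version $D_f(Tx,q)\le D_f(x,q)$ instead); this is a slip of labels, not of substance.
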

\begin{proof}
We divide the proof into several steps. In view of Lemma \ref{Hussain}, we conclude that $F(T)$ is closed and convex. Set
$$ w = proj_{F(T)}^f u. $$
\textbf{Step 1.} We prove that $\{x_n\}_{n\in \mathbb{N} }$, $\{y_n\}_{n\in \mathbb{N} }$ and $\{z_n\}_{n\in \mathbb{N} }$ are bounded sequences in $C.$\vskip 2mm
We first show that $\{x_n\}_{n\in \mathbb{N} }$ is bounded. Let $p \in F(T)  $ be fixed. In view of Lemma \ref{Butnariu02} and \eqref{Bregman Halpern}, we have
$$\aligned
 D_f (p,z_n) &= D_f \big(p,\alpha_n \nabla f (x_n) + (1-\alpha_n )\nabla f(Tx_n)\big) \\
&= V\big(p,\alpha_n \nabla f (x_n) + (1-\alpha_n )\nabla f(Tx_n)\big)\\
&\leq \alpha_n V\big(p, \nabla f (x_n)\big) + (1-\alpha_n )V \big(p,\nabla f(Tx_n)\big)\\
&= \alpha_n D_f \big(p,x_n\big) + (1-\alpha_n )D_f \big(p,Tx_n\big)\\ 
&\leq \alpha_n D_f \big(p,x_n\big) + (1-\alpha_n )D_f \big(p,x_n\big)\\
&= D_f \big(p,x_n\big).\\ 
 \endaligned $$ \vskip 2mm
Also,
$$\aligned
 D_f (p,y_n) &= D_f \big(p,\nabla f^{*} [\beta_n \nabla f (x_n) + (1-\beta_n )\nabla f(z_n)] \big) \\
&= V\big(p,\beta_n \nabla f (x_n) + (1-\beta_n )\nabla f(z_n)\big)\\
&\leq \beta_n V\big(p, \nabla f (x_n)\big) + (1-\beta_n )V \big(p,\nabla f(z_n)\big)\\
&= \beta_n D_f \big(p,x_n\big) + (1-\beta_n )D_f \big(p,z_n\big)\\ 
&\leq \beta_n D_f \big(p,x_n\big) + (1-\beta_n )D_f \big(p,x_n\big)\\
&= D_f \big(p,x_n\big).\\ 
 \endaligned $$ \vskip 2mm
 This, together with (\ref{Bregman Noor}), implies that
$$\aligned
 D_f (p,x_{n+1}) &= D_f \big(p,proj_C^{f} \big (\nabla f^{*} [\gamma_n \nabla f (u) + (1-\gamma_n )\nabla f(y_n)] \big) \\
&= D_f \big(p, \nabla f^{*} [\gamma_n \nabla f (u) + (1-\gamma_n )\nabla f(y_n)]\big) \\
&= V\big(p,\gamma_n \nabla f (u) + (1-\gamma_n )\nabla f(y_n)\big)\\
&\leq \gamma_n V\big(p, \nabla f (u)\big) + (1-\gamma_n )V \big(p,\nabla f(y_n)\big)\\
&= \gamma_n D_f \big(p,u\big) + (1-\gamma_n )D_f \big(p,y_n\big)\\ 
&\leq \gamma_n D_f \big(p,u\big) + (1-\gamma_n )D_f \big(p,y_n\big)\\
&\leq \gamma_n D_f \big(p,u\big) + (1-\gamma_n )D_f \big(p,x_n\big)\\
&\leq max \{ D_f \big(p,u\big), D_f \big(p,x_n\big)\}.\\ 
 \endaligned $$ \vskip 2mm
By induction, we obtain
\begin{equation} \label{Bregman Halpern01}
D_f \big(p, x_{n+1} \big) \leq max \{ D_f \big(p,u\big), D_f \big(p,x_1\big)\}, \,\,\,\ \forall n \in \mathbb{N}. 
\end{equation}

Let $ M_4 = max \{ D_f \big(p,u\big), D_f \big(p,x_n\big): n \in \mathbb{N} \} .$
It follows from (\ref{Bregman Halpern01}) that the sequence $\{D_f (p,x_n)\}_{n\in \mathbb{N} }$  is bounded and hence there exists $ M_4 > 0 $ such that
\begin{equation} \label{Bregman Halpern02}
D_f \big(p, x_{n} \big) \leq M_4, \,\,\,\ \forall n \in \mathbb{N}. 
\end{equation}
In view of Definition \ref{Kohsaka01}, we deduce that the sequence $\{x_n\}_{n\in \mathbb{N} }$, is bounded. Since $T$ is a
Bregman quasi-nonexpansive mapping from $C$ into itself, we conclude that
\begin{equation} \label{Bregman Halpern03}
D_f \big(p, Tx_{n} \big) \leq D_f \big(p, x_{n} \big), \,\,\,\ \forall n \in \mathbb{N}. 
\end{equation}

This, together with Definition \ref{Kohsaka01} and the boundedness of $\{x_n\}_{n\in \mathbb{N} }$, implies that $\{Tx_n\}_{n\in \mathbb{N} }$ is
bounded. The function $f$ is bounded on bounded subsets of $E$ and therefore $ \nabla f $ is also bounded on bounded subsets of $E^{*}$ (see, for example, [\cite{Butnariu01}, Proposition 1.1.11] for more details). This, together with Step 1, implies that the sequences $\{ \nabla f (x_n)\}_{n\in \mathbb{N} }$, $\{ \nabla f (y_n)\}_{n\in \mathbb{N} }$, $\{ \nabla f (z_n)\}_{n\in \mathbb{N} }$ and $\{ \nabla f (Tx_n)\}_{n\in \mathbb{N} }$ are bounded in $ E^{*}$. In view of Proposition \ref{Zalinescu02}, we obtain that $dom\,\, f^{*} = E^{*}$ and $ f^{*}$ is strongly coercive and uniformly convex on bounded subsets of $E.$ Let $ s_3 = sup \{\| \nabla f (x_n) \|, \| \nabla f (Tx_n) \| : n \in \mathbb{N}\} $ and let $ \rho_{s_3}^{*} : E^{*}\rightarrow \mathbb{R} $ be the gauge of uniform convexity of the (Fenchel) conjugate function $ f^{*}.$\vskip 2mm
\textbf{Step 2.} We prove that 
\begin{equation} \label{Bregman Halpern04}
D_f (w,z_n)\leq D_f (w,x_n) - \alpha_n (1- \alpha_n )(1-\beta_n) \rho_{s_3}^{*} \big ( \| \nabla f (x_n) - \nabla f (Tx_n) \| \big), \,\,\,\ \forall n \in \mathbb{N}.
\end{equation} 
For each $n$ in $ \mathbb{N}$, in view of the definition of Bregman distance (\ref{Bregman}), Lemma \ref{Butnariu02} and (\ref{conjugate}),
we obtain \vskip 2mm
$\aligned
D_f (w,z_n) &= f(w) -f(z_n) - \langle w - z_n , \nabla f (z_n ) \rangle \\
&= f(w) + f^{*} \big(\nabla f (z_n)\big) - \langle z_n , \nabla f (z_n)\rangle - \langle w - z_n , \nabla f (z_n ) \rangle \\
&= f(w) + f^{*} \big(\nabla f (z_n)\big) - \langle z_n , \nabla f (z_n)\rangle - \langle w, \nabla f (z_n)\rangle + \langle z_n , \nabla f (z_n ) \rangle \\
&= f(w) + f^{*} \big( (1-\alpha_n )\nabla f (x_n) + \alpha_n \nabla f (Tx_n)\big) - \langle w, ((1-\alpha_n)\nabla f (x_n)+ \alpha_n \nabla f (Tx_n ) \rangle \\
&\leq (1-\alpha_n)f(w) + \alpha_n f(w) + (1-\alpha_n) f^{*}(\nabla f (x_n) + \alpha_n f^{*} (\nabla f (Tx_n))\\
&\quad - \alpha_n (1- \alpha_n ) \rho_{s_3}^{*}( \| \nabla f (x_n) - \nabla f (Tx_n) \| )\\
&\quad -(1-\alpha_n) \langle w, \nabla f (x_n)\rangle - \alpha_n \langle w,\nabla f (Tx_n)\rangle \\
&= (1-\alpha_n) [f(w) + f^{*}(\nabla f (x_n))- \langle w, \nabla f (x_n)\rangle ]\\
&\quad + \alpha_n [ f(w)+ f^{*}(\nabla f (Tx_n))-\langle w, \nabla f (Tx_n)\rangle ]\\
&\quad - \alpha_n (1- \alpha_n ) \rho_{s_3}^{*}( \| \nabla f (x_n) - \nabla f (Tx_n) \| )\\
&= (1-\alpha_n) [f(w) - f (x_n)+\langle x_n ,\nabla f (x_n)\rangle - \langle w, \nabla f (x_n)\rangle ]\\
&\quad + \alpha_n [ f(w)- f (Tx_n) +\langle Tx_n , \nabla f (Tx_n)\rangle -\langle w, \nabla f (Tx_n)\rangle ]\\
&\quad - \alpha_n (1- \alpha_n ) \rho_{s_3}^{*}( \| \nabla f (x_n) - \nabla f (Tx_n) \| )\\
&= (1-\alpha_n) D_f ( w,x_n ) + \alpha_n D_f (w,Tx_n) - \alpha_n (1-\alpha_n)\rho_{s_3}^{*}( \| \nabla f (x_n) - \nabla f (Tx_n) \| )\\
&\leq (1-\alpha_n) D_f ( w,x_n ) + \alpha_n D_f (w,x_n) - \alpha_n (1-\alpha_n)\rho_{s_3}^{*}( \| \nabla f (x_n) - \nabla f (Tx_n) \| )\\
&= D_f ( w,x_n ) - \alpha_n (1-\alpha_n)\rho_{s_3}^{*}( \| \nabla f (x_n) - \nabla f (Tx_n) \| ).\\
\endaligned $ \vskip 2mm
Also,
$$\aligned
 D_f (w,y_n) &= D_f \big(w,\beta_n \nabla f (x_n) + (1-\beta_n )\nabla f(z_n)\big) \\
&= V\big(w,\beta_n \nabla f (x_n) + (1-\beta_n )\nabla f(z_n)\big)\\
&\leq \beta_n V\big(w, \nabla f (x_n)\big) + (1-\beta_n )V \big(w,\nabla f(z_n)\big)\\
&= \beta_n D_f \big(w,x_n\big) + (1-\beta_n )D_f \big(w,z_n\big)\\ 
&\leq \beta_n D_f \big(w,x_n\big) + (1-\beta_n )D_f \big(w,x_n\big)\\
&\quad- \alpha_n (1-\alpha_n)(1- \beta_n )\rho_{s_3}^{*}( \| \nabla f (x_n) - \nabla f (Tx_n) \| )\\
&= D_f \big(w,x_n\big) - \alpha_n (1-\alpha_n)(1- \beta_n )\rho_{s_3}^{*}( \| \nabla f (x_n) - \nabla f (Tx_n) \| ).\\ 
 \endaligned $$ \vskip 2mm
In view of Lemma \ref{Butnariu02} and (\ref{Bregman Halpern04}), we obtain
\begin{equation} \label{Bregman Halpern05}
\aligned
 D_f (w,x_{n+1}) &= D_f \big(w,proj_C^{f} \big (\nabla f^{*} [\gamma_n \nabla f (u) + (1-\gamma_n )\nabla f(y_n)] \big) \\
&= D_f \big(w, \nabla f^{*} [\gamma_n \nabla f (u) + (1-\gamma_n )\nabla f(y_n)]\big) \\
&= V\big(w,\gamma_n \nabla f (u) + (1-\gamma_n )\nabla f(y_n)\big)\\
&\leq \gamma_n V\big(w, \nabla f (u)\big) + (1-\gamma_n )V \big(w,\nabla f(y_n)\big)\\
&= \gamma_n D_f \big(w,u\big) + (1-\gamma_n )D_f \big(w,y_n\big)\\ 
&\leq \gamma_n D_f \big(w,u\big) \\
&\quad + (1-\gamma_n )[D_f \big(w,x_n\big) - \alpha_n (1-\alpha_n)(1- \beta_n )\rho_{s_3}^{*}( \| \nabla f (x_n) - \nabla f (Tx_n) \| )].\\
 \endaligned 
\end{equation}
Let $ M_5 = sup \{ | D_f \big(w,u\big)- D_f \big(w,x_n\big)| +  \alpha_n (1-\alpha_n)(1- \beta_n )\rho_{s_3}^{*}( \| \nabla f (x_n) - \nabla f (Tx_n) \| ): n \in \mathbb{N} \}.$\vskip 2mm
It follows from (\ref{Bregman Halpern05}) that 
\begin{equation} \label{Bregman Halpern06}
\alpha_n (1-\alpha_n)(1- \beta_n )\rho_{s_3}^{*}( \| \nabla f (x_n) - \nabla f (Tx_n) \| )\leq D_f (w,x_n) - D_f(w,x_{n+1}) + \gamma_n M_5.
\end{equation}
Let
$$ w_n = \nabla f^{*} [\gamma_n \nabla f (u) + (1-\gamma_n )\nabla f(y_n)]. $$
Then $ x_{n+1} = proj_C^{f} (w_n), \,\,\, \forall n \in \mathbb{N}.$ In view of Lemma \ref{Butnariu02} and (\ref{Bregman Halpern04}) we obtain
\begin{equation} \label{Bregman Halpern07}
\aligned
 D_f (w,x_{n+1}) &= D_f \big(w,proj_C^{f} \big (\nabla f^{*} [\gamma_n \nabla f (u) + (1-\gamma_n )\nabla f(y_n)] \big) \\
&\leq D_f \big(w, \nabla f^{*} [\gamma_n \nabla f (u) + (1-\gamma_n )\nabla f(y_n)]\big) \\
&= V\big(w,\gamma_n \nabla f (u) + (1-\gamma_n )\nabla f(y_n)\big)\\
&\leq V\big(w,\gamma_n \nabla f (u) + (1-\gamma_n )\nabla f(y_n)\big) -\gamma_n (\nabla f (u)- \nabla f (w))\\
&\quad- \langle \nabla f^{*} [\gamma_n \nabla f (u) + (1-\gamma_n )\nabla f(y_n)]- w, -\gamma_n (\nabla f (u)- \nabla f (w))\rangle\\
&= V\big(w,\gamma_n \nabla f (w) + (1-\gamma_n )\nabla f(y_n)\big)+ \gamma_n \langle w_n - w , \nabla f (u)- \nabla f (w) \rangle \\
&\leq \gamma_n V\big(w, \nabla f (w)\big) + (1-\gamma_n )V \big(w,\nabla f(y_n)\big)+ \gamma_n \langle w_n - w , \nabla f (u)- \nabla f (w) \rangle \\
&= \gamma_n D_f \big(w,w\big) + (1-\gamma_n )D_f \big(w,y_n\big) + \gamma_n \langle w_n - w , \nabla f (u)- \nabla f (w) \rangle \\ 
&= (1-\gamma_n )D_f \big(w,y_n\big) + \gamma_n \langle w_n - w , \nabla f (u)- \nabla f (w) \rangle. \\ 
 \endaligned 
\end{equation}
\textbf{Step 3.} We show that $ x_n \rightarrow w $ as $ n \rightarrow \infty.$ 

\textit{Case 1.} If there exists $ n_0 \in \mathbb{N} $  such that $\{ D_f (w,x_n)\}_{n={n_0}}^{\infty} $  is nonincreasing, then $\{ D_f (w,x_n)\}_{n\in \mathbb{N} }$
is convergent. Thus, we have $ D_f (w,x_n)- D_f (w,x_{n+1})\rightarrow 0 $ as $ n \rightarrow \infty $. This, together with (\ref{Bregman Halpern06}) and conditions (a) and (c), implies that 
$$ \displaystyle \lim_{n\to \infty} \rho_{s_3}^{*} \big ( \| \nabla f (x_n) - \nabla f (Tx_n) \| = 0.$$
Therefore, from the property of $ \rho_{s_3}^{*} $
we deduce that
\begin{equation} \label{Bregman Halpern08}
\displaystyle \lim_{n\to \infty} \| \nabla f (x_n) - \nabla f (Tx_n) \| = 0.
\end{equation}
Since $ \nabla f^{*} = (\nabla f)^{-1}$ (Lemma \ref{Zalinescu}) is uniformly norm-to-norm continuous on bounded subsets
of $ E^{*} $ (see, for example,  \cite{Zalinescu}), we arrive at
\begin{equation} \label{Bregman Halpern09}
\displaystyle \lim_{n\to \infty} \| x_n - Tx_n \| = 0.
\end{equation}
On the other hand, we have
$$ \aligned
 D_f (Tx_n,z_{n}) &= D_f \big(Tx_n, \gamma_n \nabla f (x_n) + (1-\gamma_n )\nabla f(Tx_n)\big) \\
 &= V\big(Tx_n,\gamma_n \nabla f (x_n) + (1-\gamma_n )\nabla f(Tx_n)\big)\\
&\leq \gamma_n V\big(Tx_n, \nabla f (x_n)\big) + (1-\gamma_n )V \big(Tx_n,\nabla f(Tx_n)\big)\\
&= \gamma_n D_f \big(Tx_n,x_n\big) + (1-\gamma_n )D_f \big(Tx_n,Tx_n\big)\\ 
&\leq \gamma_n D_f \big(Tx_n,x_n\big). \\
 \endaligned  $$ 
This, together with Lemma \ref{Naraghirad02} and (\ref{Bregman Halpern09}), implies that
$$ \displaystyle \lim_{n\to \infty} D_f (Tx_n,z_n)= 0.  $$
Similarly, we have
$$ D_f (z_n , w_n ) \leq \gamma_n D_f (z_n,u ) + (1 - \gamma_n) D_f (z_n,z_n ) = \gamma_n D_f ( z_n , u) \rightarrow 0 \,\,\,\,  \text{as} \,\,\, n \rightarrow \infty. $$
In view of Lemma \ref{Naraghirad02} and (\ref{Bregman Halpern09}), we conclude that
$$ \displaystyle \lim_{n\to \infty} \| z_n - Tx_n \| = 0 \,\,\, \text{and} \,\,\ \displaystyle \lim_{n\to \infty} \| w_n - x_n \| = 0.  $$
Since $\{x_n\}_{n\in \mathbb{N} }$ is bounded, together with (\ref{Bregman projection}) we can assume there exists a subsequence
$\{x_{n_i}\}_{i \in \mathbb{N} }$ of $\{x_n\}_{n\in \mathbb{N} }$ such that $ x_{n_i} \rightharpoonup z \in F(T) $ (Proposition \ref{Bregman alpha02}) and
$$ \aligned
\displaystyle \limsup_{n\to \infty} \langle x_n - w, \nabla f (u) - \nabla f (w)\rangle &= \displaystyle \lim_{i\to \infty} \langle x_{n_i} - w, \nabla f (u) - \nabla f (w)\rangle \\
&= \langle y - w, \nabla f (u) - \nabla f (w)\rangle \leq 0 . \\
\endaligned  $$ 
We thus conclude
$$ \displaystyle \limsup_{n\to \infty} \langle z_n - w, \nabla f (u) - \nabla f (w)\rangle = \displaystyle \limsup_{n\to \infty} \langle x_{n} - w, \nabla f (u) - \nabla f (w)\rangle \leq 0. $$
The desired result follows from Lemma \ref{Naraghirad02} and \ref{Xu} and (\ref{Bregman Halpern07}).\\

\textit{Case 2.} Suppose there exists a subsequence $\{n_{i}\}_{i\in \mathbb{N} }$ of $\{n\}_{n\in \mathbb{N} }$  such that
$$ D_f (w,x_{n_i}) < D_f (w, x_{n_i+1} ), \,\,\, \forall i \in \mathbb{N}. $$ By Lemma \ref{Mainge}, there exists a non-decreasing sequence $\{m_{k}\}_{k\in \mathbb{N} }$ of positive integers such that $ m_k \rightarrow \infty, $
$$ D_f (w,x_{m_k}) < D_f (w, x_{m_k+1}) \,\,\, \text{and} \,\,\, D_f (w,x_{k}) < D_f (w, x_{m_k+1}), \,\,\, \forall k \in \mathbb{N}. $$
This, together with (\ref{Bregman Halpern06}), implies that
$$ \alpha_{m_k} (1-\alpha_{m_k})(1- \beta_{m_k} )\rho_{s_3}^{*}( \| \nabla f (x_{m_k}) - \nabla f (Tx_{m_k}) \| )\leq D_f (w,x_{m_k}) - D_f(w,x_{m_k+1}) + \gamma_{m_k} M_5 \leq \gamma_{m_k} M_5, $$
$ \forall k \in \mathbb{N}. $\\
Then, by conditions (a) and (c), we get
$$ \displaystyle \lim_{k\to \infty} \rho_{s_3}^{*} \big ( \| \nabla g (x_{m_k}) - \nabla f (Tx_{m_k}) \| = 0.$$
By the same argument, as in Case 1, we arrive at
\begin{equation} \label{Bregman Halpern10}
\displaystyle \limsup_{k\to \infty} \langle w_{m_k} - w, \nabla f (u) - \nabla f (w)\rangle = \displaystyle \limsup_{k\to \infty} \langle x_{m_k} - w, \nabla f (u) - \nabla f (w)\rangle \leq 0.
\end{equation}
It follows from (\ref{Bregman Halpern07}) that
\begin{equation} \label{Bregman Halpern11}
D_f (w,x_{m_k+1}) \leq (1-\gamma_{m_k}) D_f (w, x_{m_k}) + \gamma_{m_k} D_f (w,x_{m_k}) + \gamma_{m_k} \langle z_{m_k} - w, \nabla f (u) - \nabla f (w)\rangle. 
\end{equation}
Since $ D_f (w, x_{m_k})\leq D_f (w,x_{m_k+1}),$ we have that 
$$ \aligned
 \gamma_{m_k}D_f (w,x_{m_k}) &\leq D_f (w, x_{m_k}) - D_f (w, x_{m_k+1}) + \gamma_{m_k} \langle w_{m_k} - w, \nabla f (u) - \nabla f (w)\rangle\\
 &\leq  \gamma_{m_k} \langle w_{m_k} - w, \nabla f (u) - \nabla f (w)\rangle. \\
 \endaligned  $$ 
 In particular, since $\gamma_{m_k} > 0, $ we obtain 
$$ D_f (w,x_{m_k}) \leq\langle w_{m_k} - w, \nabla f (u) - \nabla f (w)\rangle.  $$
In view of (\ref{Bregman Halpern10}), we deduce that
$$ \displaystyle \lim_{k\to \infty} D_f (w,x_{m_k})= 0. $$
This, together with (\ref{Bregman Halpern11}), implies
$$ \displaystyle \lim_{k\to \infty} D_f (w,x_{m_k+1})= 0 . $$
On the other hand, we have $ D_f (w,x_{k}) \leq D_f (w,x_{m_k+1}), \,\,\, \forall k \in \mathbb{N} $.\\  This ensures that $ x_k \rightarrow w \,\,\, \text{as} \,\,\, k \rightarrow \infty $ by Lemma \ref{Naraghirad02}. 
\end{proof}


\section{Numerical example}

In this section we discuss the direct application of Theorem \ref{Bregman Noor04} on a typical example on a real line. 
\begin{exmp}
Let $ E=\mathbb{R} ,$ the set of all real numbers, $ C=[-1,1],$ and let $ f : \mathbb{R}\to \mathbb{R} $ be defined by $ f(x)=\frac{4}{5}x^2.$ Let $ T:C \to C $ be defined by $ Tx=\frac{1}{5}x .$ Setting $ \lbrace \alpha_n \rbrace =\lbrace \frac{n+1}{4n} \rbrace, \lbrace \beta_n \rbrace = \lbrace \frac{n+1}{5n} \rbrace,\\ \lbrace \gamma_n \rbrace = \lbrace \frac{1}{500n}\rbrace, \forall n \geq 1.$ Consider the following: 
\begin{equation*}
\aligned
&E=\mathbb{R},\quad C=[-1,1],\quad Tx=\frac{1}{5}x,\quad f(x)=\frac{4}{5}x^2,\quad\nabla f(x)=\frac{8}{5}x,\\
&f^*(x^*)=\sup\{\langle x^*,x\rangle-f(x) : x\in E\},\quad f^*(z)=\frac{5}{16}z^2,\quad\nabla f^*(z)=\frac{5}{8}z,\\
&\alpha_n=\frac{n+1}{4n},\quad\beta_n=\frac{n+1}{5n},\quad\gamma_n=\frac{1}{500n},\\
& z_n = \alpha_n \nabla f (x_n) + (1 - \alpha_n ) \nabla f (Tx_n)=\frac{16n+8}{25n}x_n,\\
& y_n = \nabla f^{*} [\beta_n \nabla f (x_n) + ( 1 - \beta_n )\nabla f (z_n)]=\frac{n+1}{5n}x_n+\frac{4n-1}{5n}z_n,\\
& x_{n+1} = \nabla f^{*} [\gamma_n \nabla f (u) + ( 1 - \gamma_n) \nabla f (y_n) ]=\frac{u}{500n}+\frac{500n-1}{500n}y_n.
\endaligned
\end{equation*}

Given initial values $x_1=-0.8$ and $u=0.1.$  Using the software Matlab 2017b, we have the following Figure \ref{fig.1} and Table \ref{table.1} which show that $\{x_n\}, \{z_n\}$ and $\{y_n\}$ converge to $w = \{0\}$ as $n\rightarrow\infty.$
\end{exmp}

\begin{figure}[h]
	\centering
	\includegraphics[{width=13cm}]{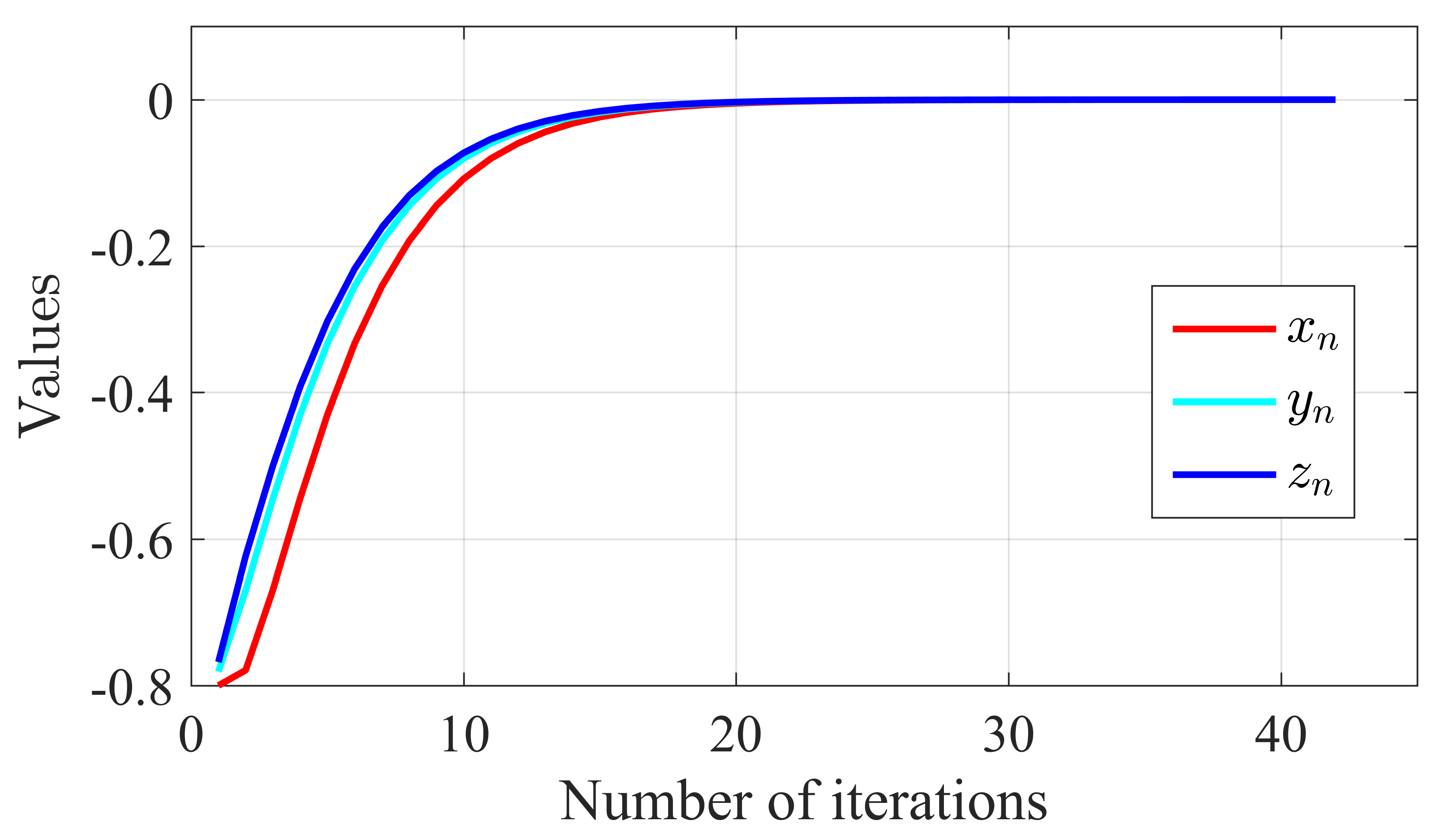}
	\caption{Plotting of $\{x_n\}, \{y_n\}$ and $\{z_n\}$ converge to $w = \{0\}$ as $n\rightarrow\infty$  }\label{fig.1}
\end{figure}  

\begin{table}[h]
    \centering
    \begin{tabular}{l l l l l }
        \hline
No. of iterations & $ z_n $ & $ y_n $ & $ x_n $ & $ \Vert x_{n+1} - x_{n} \Vert$ \\ \hline
1&	-0.7680000&	-0.7808000&	-0.7790384&	0.0209616\\
2&	-0.6232307&	-0.6699730&	-0.6692031&	0.1098353\\
3&	-0.4996716&	-0.5448800&	-0.5444501&	0.1247530\\
4&	-0.3920041&	-0.4301156&	-0.4298505&	0.1145996\\
5&	-0.3026148&	-0.3331513&	-0.3329781&	0.0968724\\
6&	-0.2308648&	-0.2546912&	-0.2545730&	0.0784051\\
7&	-0.1745643&	-0.1928520&	-0.1927684&	0.0618046\\
8&	-0.1310825&	-0.1449618&	-0.1449006&	0.0478678\\
9&	-0.0978884&	-0.1083355&	-0.1082892&	0.0366113\\
10&	-0.0727704&	-0.0805845&	-0.0805484&	0.0277408\\
11&	-0.0538942&	-0.0597097&	-0.0596806&	0.0208678\\
12&	-0.0397871&	-0.0440974&	-0.0440733&	0.0156073\\
13&	-0.0292918&	-0.0324755&	-0.0324551&	0.0116182\\
14&	-0.0215131&	-0.0238578&	-0.0238402&	0.0086150\\
15&	-0.0157663&	-0.0174887&	-0.0174730&	0.0063671\\
16&	-0.0115322&	-0.0127946&	-0.0127805&	0.0046925\\
17&	-0.0084201&	-0.0093435&	-0.0093306&	0.0034499\\
18&	-0.0061375&	-0.0068116&	-0.0067997&	0.0025309\\
19&	-0.0044663&	-0.0049576&	-0.0049465&	0.0018532\\
20&	-0.0032449&	-0.0036023&	-0.0035919&	0.0013546\\
21&	-0.0023536&	-0.0026130&	-0.0026032&	0.0009887\\
22&	-0.0017039&	-0.0018920&	-0.0018827&	0.0007205\\
23&	-0.0012311&	-0.0013671&	-0.0013583&	0.0005244\\
24&	-0.0008874&	-0.0009855&	-0.0009771&	0.0003812\\
25&	-0.0006379&	-0.0007084&	-0.0007004&	0.0002767\\
26&	-0.0004569&	-0.0005074&	-0.0004997&	0.0002007\\
27&	-0.0003257&	-0.0003618&	-0.0003544&	0.0001453\\
28&	-0.0002309&	-0.0002564&	-0.0002493&	0.0001051\\
29&	-0.0001623&	-0.0001803&	-0.0001734&	0.0000759\\
30&	-0.0001128&	-0.0001253&	-0.0001187&	0.0000547\\
31&	-0.0000772&	-0.0000857&	-0.0000793&	0.0000394\\
32&	-0.0000515&	-0.0000572&	-0.0000510&	0.0000283\\
33&	-0.0000331&	-0.0000368&	-0.0000307&	0.0000202\\
34&	-0.0000200&	-0.0000222&	-0.0000163&	0.0000144\\
35&	-0.0000106&	-0.0000118&	-0.0000060&	0.0000103\\
36&	-0.0000039&	-0.0000044&	0.0000012&	0.0000072\\
37&	0.0000008&	0.0000009&	0.0000063&	0.0000051\\
38&	0.0000041&	0.0000045&	0.0000098&	0.0000035\\
39&	0.0000063&	0.0000070&	0.0000122&	0.0000024\\
40&	0.0000079&	0.0000088&	0.0000138&	0.0000016\\
41&	0.0000089&	0.0000099&	0.0000148&	0.0000010\\
42&	0.0000096&	0.0000106&	0.0000154&	0.0000006 \\ \hline
\end{tabular} \vskip 3 mm
\caption{Values of $ z_n $, $ y_n $ and $ x_n $  }\label{table.1}
\end{table}


\newpage
\section{Acknowledgements}
This project was supported by the Theoretical and Computational Science (TaCS) Center under Computational and Applied Science for Smart Innovation Research Cluster (CLASSIC), Faculty of Science, KMUTT (For CLASSIC-Cluster and TaCS-Center).

\bibliographystyle {amsplain}
\begin {thebibliography}{99}

\bibitem{Bauschke} H. Bauschke and J. Borwein, Legendre functions and the method of random Bregman projections,  J. Convex Anal., Vol. 4 (1997) 27-67.

\bibitem{Bauschke1} H. Bauschke, J. Borwein and P. Combettes, Bregman monotone optimization algorithms, SIAM Journal on Control and Optimization., Vol. 42 (2003) 596-636.

\bibitem{Bauschke2} H. Bauschke and P. Combettes, Construction of best Bregman approximations in reflexive Banach spaces,  Proc. Amer. Math. Soc., Vol. 131 (2003) 3757-3766.

\bibitem{Bregman}  L. M. Bregman, The relaxation method of finding the common point of convex sets and its application to the solution of problems in convex programming, USSR Comput. Math. Math. Phys., 7 (1967) 200-217.

\bibitem{Browder} F. Browder, Convergence of approximants to fixed points of nonexpansive nonlinear maps in Banach spaces, Archive for Rational Mechanics and Analysis., Vol. 24 (1967) 82-90.

\bibitem{Butnariu01} D. Butnariu and A. N. Iusem, Totally convex functions for fixed points computation and infinite dimensional optimization, Kluwer Academic Publishers., Dordrecht 2000.

\bibitem{Butnariu02} D. Butnariu and E. Resmerita, Bregman distances, totally convex functions and a method for solving operator equations in Banach spaces, Abstract and Applied Analysis., 2006 (2006) Art. ID 84919, 1-39.

\bibitem{Censor}Y. Censor and A. Lent, An iterative row-action method for interval convex programming, J. Optim. Theory Appl., 34 (1981) 321-358.

\bibitem{Chen}  G. Chen and M. Teboulle, Convergence analysis of a proximal-like minimization algorithm using Bregman functions, SIAM J. Optimization., 3 (1993) 538-543.




\bibitem{Halpern}  B. Halpern, Fixed points of nonexpanding mappings, Bull. Amer. Math. Soc., 73 (1967) 957-961.

\bibitem{Huang} Y.-Y. Huang, J.-C. Jeng, T.-Y. Kuo and C.-C. Hong, Fixed point and weak convergence theorems for point-dependent $\lambda$-hybrid mappings in Banach spaces, Fixed Point Theory and Applications., 2011 (2011) 105.

\bibitem{Hussain} N. Hussain, E. Naraghirad and A. Alotaibi, Existence of common fixed points using Bregman nonexpansive retracts and Bregman functions in Banach spaces, Fixed Point Theory and Applications., 2013 (2013) 113.

\bibitem{Ishikawa}S. Ishikawa, Fixed points and iteration of a nonexpansive mapping in a Banach space, Proc. Amer. Math. Soc., 59 (1976) 65–71.

\bibitem{Kohsaka01} F. Kohsaka and W. Takahashi, Proximal point algorithms with Bregman functions in Banach spaces, Journal of Nonlinear and Convex Analysis., Vol. 6, No. 3 (2005) 505-523.

\bibitem{Kohsaka02} F. Kohsaka and W. Takahashi, Fixed point theorems for a class of nonlinear mappings related to maximal monotone operators in Banach spaces, Arch. Math., 91 (2008),166V177.

\bibitem{Mainge} P. E. Maing\'{e}, Strong convergence of projected subgradient methods for nonsmooth and nonstrictly convex minimization, Set-Valued Anal., 16 (2008) 899-912.

\bibitem{Mann} W. R. Mann, Mean value methods in iteration, Proc. Amer. Math. Soc., 4 (1953) 506-510.


\bibitem{Naraghirad01}  E. Naraghirad and J.-C. Yao, Bregman weak relatively nonexpansive mappings in Banach spaces, Fixed Point Theory and Applications., 2013:141.

\bibitem{Naraghirad02} E. Naraghirad, N.-C. Wong and J.-C. Yao, Applications of Bregman-Opial property to Bregman nonspreading mappings in Banach spaces, Abstract and Applied Analysis., 2014, :1-14.

\bibitem{Nilsrakoo}  W. Nilsrakoo and S. Saejung, Strong convergence theorems by Halpern-Mann iterations for relatively nonexpansive mappings in Banach spaces, Applied Mathematics and Computation., 217 (2011) 6577-6586.

\bibitem{Noor} M. A. Noor, New approximation schemes for general variational inequalities. J. Math.Anal. Appl., 251 (2000) 217–229.

\bibitem{Opial} Z. Opial, Weak convergence of the sequence of successive approximations for nonexpansive mappings, Bull. Amer. Math. Soc., 73 (1967) 595-597.

\bibitem{Pang} C.T. Pang, E. Naraghirad and C.F. Wen, Weak Convergence Theorems for Bregman Relatively Nonexpansive Mappings in Banach Spaces, Journal of Applied Mathematics., Vol. 2014, Article ID 573075.

\bibitem{Pant} R. Pant and R. Shukla, Approximating fixed points of generalized $ \alpha$-nonexpansive mapping in Banach space, Numer.
Funct. Anal. Optim., Vol. 38 (2017) 248-266.

\bibitem{Reich01} S. Reich, Weak convergence theorems for nonexpansive mappings in Banch spaces, Journal of Mathematical Analysis and Applications., 67 (1979) 274-276.

\bibitem{Reich02} S. Reich and S. Sabach, Existence and approximation of fixed points of Bregman firmly nonexpansive mappings in reflexive Banach spaces, Fixed-Point Algorithms for Inverse Problems in Science and Engineering, Springer, New York, 2010, 299-314.

\bibitem{Reich} S. Reich and S. Sabach, Two strong convergence theorems for Bregman strongly nonexpansive operators in reflexive Banach spaces,  Nonlinear Analysis., Vol. 73 (2010) 122-135.



\bibitem{Rockafellar02} R. T. Rockafellar, On the maximal monotonicity of subdifferential mappings, Paciﬁc J. Math., 33 (1970) 209-216.

\bibitem{Senter} H. F. Senter and W. G. Dotson, Approximating fixed points of nonexpansive mappings, Proc. Amer. Math. Soc., 44 (1974) 375-380.


\bibitem{Suzuki} T. Suzuki, Fixed point theorems and convergence theorems for some generalized nonexpansive mappings, J. Math. Anal. Appl., 340 (2008) 1088-1095.

\bibitem{Takahashi01}  W. Takahashi, Nonlinear Functional Analysis, Fixed Point Theory and its Applications, Yokahama Publishers, Yokahama, 2000.

\bibitem{Takahashi02} W. Takahashi and G. E. Kim, Approximating fixed points of nonexpansive mappings in Banach spaces, Math. Jpn. 48 (1998) 1-9.

\bibitem{Takahashi03}W. Takahashi, Y. Takeuchi and R. Kubota, Strong convergence theorems by hybrid methods for families of nonexpansive mappings in Hilbert spaces, Journal of Mathematical Analysis and Applications, 341 (2008) 276-286.


\bibitem{Xu} H. K. Xu and T.K. Kim, Convergence of hybrid steepest-descent methods for variational inequalities, Journal of Optimization Theory and Applications, 119 (1) (2003) 185-201.

\bibitem{Zalinescu}  C. Z\v{a}linescu, Convex analysis in general vector spaces, World Scientific Publishing Co.Inc., River Edge NJ, 2002.


\end{thebibliography}
\end{document}